\numberwithin{equation}{section}
\theoremstyle{plain}
\newtheorem{theorem}{Theorem}[section]
\newtheorem{lemma}[theorem]{Lemma}
\newtheorem{proposition}[theorem]{Proposition}
\newtheorem{corollary}[theorem]{Corollary}
\theoremstyle{definition}
\newtheorem{definition}[theorem]{Definition}
\newtheorem{remark}[theorem]{Remark}
\begin{document}

\begin{frontmatter}

\title {Existence results for linear evolution equations of parabolic type}
\runtitle{Linear evolution equations of parabolic type}


\begin{aug}
\author{
T\^{o}n Vi$\hat{\d{e}}$t  T\d{a}\ead[label=e1]{tavietton[at]agr.kyushu-u.ac.jp}
}



\runauthor{T$\hat{\rm o}$n V.  T\d{a}}
\affiliation{Kyushu University}

\address{
Center for Promotion of International Education and Research\\
Kyushu University\\
 6-10-1 Hakozaki, Higashi-ku, Fukuoka 812-8581, Japan\\
\printead{e1}   }
\end{aug} 

\begin{abstract}
We study  both strict and mild solutions to  parabolic evolution equations of the form  
$dX+AXdt=F(t)dt+G(t)dW(t)$
in Banach spaces.  
First, we explore the deterministic case. The maximal  regularity of  solutions has been shown. 
Second, we investigate the stochastic case. We prove existence of strict solutions and show their space-time regularity. Finally, we apply our abstract results  to  a  stochastic heat equation.
\end{abstract}

\begin{keyword}[class=MSC]
\kwd[Primary ]{47D06}
\kwd{60H15}
\kwd[; secondary ]{35R60}
\end{keyword}

\begin{keyword}
\kwd{parabolic evolution equations}
\kwd{UMD Banach spaces of type 2}
\kwd{strict and mild solutions}
\kwd{maximal regularity}
\kwd{analytic semigroups}
\end{keyword}
\end{frontmatter}

\section {Introduction}

We consider the Cauchy problem for a linear evolution equation with additive noise
\begin{equation} \label{TVT1}
\begin{cases}
dX+AXdt=F(t)dt+ G(t)dW(t),\hspace{1cm} 0<t\leq T,\\
X(0)=\xi
\end{cases}
\end{equation}
in a    UMD Banach space $E$ of type 2 with  norm $\|\cdot\|$. 
Here, $W$ denotes  a cylindrical Wiener process on a separable Hilbert space $H$, and is defined on a filtered, complete  probability space  $(\Omega, \mathcal F,\{\mathcal F_t\}_{t\geq 0},\mathbb P).$  
 Operators $G(t), 0\leq t\leq T,$ are $\gamma$\,-\,radonifying operators from $H$ to $E$, whereas  $F$ is an $E$\,{-}\,valued measurable function  on  $[0,T]$.  Initial value $ \xi$ is an $E$\,{-}\,valued $\mathcal  F_0$\,{-}\,measurable  random variable.  And,  $A$ is a  sectorial  operator in $E$, i.e. it is a densely defined, closed linear operator satisfying the condition:
\begin{itemize}
  \item [(\rm{A})] The spectrum $\sigma(A)$ of $A$ is  contained in an open sectorial domain $\Sigma_{\varpi}$: 
\begin{equation*} \label{A1} 
\sigma(A) \subset  \Sigma_{\varpi}=\{\lambda \in \mathbb C: |\arg \lambda|<\varpi\}, \quad \quad 0<\varpi<\frac{\pi}{2}.
       \end{equation*}
   The resolvent satisfies the estimate  
\begin{equation*} \label{A2}
          \|(\lambda-A)^{-1}\| \leq \frac{M_{\varpi}}{|\lambda|}, \quad\quad\quad \quad   \lambda \notin \Sigma_{\varpi}
     \end{equation*}
     with some  constant $M_{\varpi}>0$ depending only on the angle $\varpi$.
\end{itemize}

The equation \eqref{TVT1} has been extensively explored in different settings. In the deterministic case (i.e. $G\equiv 0$), it has been investigated by many researchers (see, e.g.,  \cite{Ball,prato-1,Favini,Lunardi,Pazy,Sinestrari},\cite{Tanabe0}-\cite{Tanabe3},\cite{yagi0}-\cite{yagi}). Not only weak solutions but also strict solutions have been studied. The three main approaches are known for this study, namely, the semigroups methods, the variational methods and the methods of using  operational equations.


In the stochastic case (i.e. $G\not\equiv 0$), weak solutions in $L_2$ spaces   have  been  shown in 
\cite{prato0,prato}  by using the semigroup methods, in \cite{Rozovskii} by using the variational methods, and in \cite{Walsh} by using the martingale methods.  After that, some researchers have studied that kind of solutions in weighted Sobolev  spaces or weighted H\"{o}lder  spaces (see  \cite{Krylov0,Krylov1,Mikulevicius1}) by using the semigroup or the variational methods.  

However, existence of strict solutions to \eqref{TVT1} is only shown in a very restrictive case. In \cite{prato0,prato}, Da Prato et al. showed that when $A$ is a bounded linear operator,  \eqref{TVT1} that is considered in Hilbert spaces   possesses strict solutions (under other conditions on coefficients and initial value).


The work in \cite{prato} inspired us to study existence of  strict solutions to \eqref{TVT1}  when the linear operator $A$ is unbounded. 
In the present paper, we want to consider the equation in Banach spaces (for the deterministic case) and in UMD Banach spaces of type 2 (for the stochastic case), where both $F$ and $G$  have  temporal and spatial regularity. 
In the deterministic case, our results improve  those in \cite{Ton1} and generalize the  maximal regularity theorem 
in \cite{yagi}. In the stochastic case, we show existence and regularity of strict solutions, provided that $A$ is a (unbounded) sectorial operator.

Let us assume  that $A^{-\alpha_1}F$ and $A^{-\alpha_2}G \, (-\infty<\alpha_1,\alpha_2<\infty),$ respectively, belong to  weighted H\"{o}lder continuous function spaces $\mathcal F^{\beta, \sigma}((0,T];E)$ and $\mathcal F^{\beta, \sigma}((0,T];\gamma(H;E)).$
 In the deterministic case, the maximal regularity for both initial value $\xi$ and function $F$ are shown  in Theorem \ref{T2}: 
\begin{align*}
(\text{input})   \hspace{0.5cm}&   \xi \in \mathcal D(A^{\beta-\alpha_1}), \quad A^{-\alpha_1}F \in \mathcal F^{\beta, \sigma}((0,T];E).      \\
(\text{output})   \hspace{0.5cm} &  X \in \mathcal C((0,T];\mathcal D(A^{\beta-\alpha_1})), \quad A^{1-\alpha_1}X \in \mathcal F^{\beta, \sigma}((0,T];E), \\
 &  \quad A^{-\alpha_1} \frac{dX}{dt} \in \mathcal  F^{\beta, \sigma}((0,T];E).
\end{align*}
In the stochastic case, \eqref{TVT1} possesses a unique strict solution (see Corollary \ref{cor1}):
\begin{align*}
(\text{input})   \hspace{0.5cm}&   \xi \in \mathcal D(A^{\beta-\alpha_1}), \quad A^{-\alpha_1}F \in \mathcal F^{\beta, \sigma}((0,T];E),      \\
& A^{-\alpha_2}G\in\mathcal F^{\beta, \sigma}((0,T];\gamma(H;E))\quad (\alpha_1\leq 0, \alpha_2 <\alpha_1- \frac{1}{2}). \\
(\text{output})   \hspace{0.5cm} &   \text{There exists a unique strict solution } X \text{ such that } \\
& X \in \mathcal C([0,T];\mathcal D(A^{\beta-\alpha_1})), \quad AX\in \mathcal C ((0,T];E)\hspace{1cm}\text{ a.s.}
\end{align*}

For the study, we use the semigroup methods. In particular, we very often use an identity: 
$$\int_s^t (t-u)^{\alpha-1} (u-s)^{\beta-1} du= (t-s)^{\alpha+\beta-1} B(\beta,\alpha), \hspace{1cm} 0\leq s<t<\infty, $$
where $B(\cdot,\cdot)$ is the Beta function and $0<\alpha, \beta<1$ are some constants. Notice that when $\alpha+\beta=1$, we have
$$\int_s^t (t-u)^{\alpha-1} (u-s)^{-\alpha} du= \frac{\pi}{\sin(\pi \alpha)}, \hspace{1cm} 0\leq s<t<\infty. $$
This identity has been used as a key point in the so-called  {\em factorization method} introduced by  Da Prato et al. (see \cite{prato0,prato}).  

For applications, our results can be applied to  a class of stochastic partial differential equations such as  heat equations, reaction diffusion equations,   FitzHugh-Nagumo models, or Hodgkin-Huxley models (see, e.g., \cite{Walsh,prato}). In the last section of the present paper, we consider a special case, namely 
$\sigma_1=-a(x) u(t,x)+b(t,x)$ and $\sigma_2=\sigma(t,x)$ 
 (see \eqref{TVT53}),  of the nonlinear stochastic heat equation: 
\begin{equation}  \label{TVT52}
\frac{\partial u}{\partial t}=\Delta u+\sigma_1(t,x,u(t,x))+\sigma_2(t,x,u(t,x))\dot W(t,x).
\end{equation} 
We should mention that weak solutions to \eqref{TVT52}  have been studied in  \cite{Mueller,Pardoux3,Shiga,Walsh} and references therein. By using our abstract results,    strict solutions to \eqref{TVT53} can be obtained (see Theorems \ref{T5} and \ref{T6}).



The paper is organized as follows. Section \ref{section2} is preliminary.
 Section \ref{section3} studies  the deterministic case of  \eqref{TVT1}. The  stochastic case is investigated in Section \ref{section4}. Finally, Section \ref{section5} gives an application to heat equations.

 \section{Preliminary} \label{section2}

\subsection {UMD Banach spaces of type 2} 
Let us recall the notion of UMD Banach spaces of type 2.
\begin{definition}  \label{UMD}
\begin{itemize}
  \item [{\rm (i)}]
A Banach space $E$ is called a  UMD  space  if for some (equivalently, for all) $1<p<\infty,$ there is a constant $c_{p}(E)$ such that for any $L^p$\,-\,integrable $E$\,-\,valued martingale difference $\{M_n\}_n$ (i.e. $\{\sum_{i=1}^n M_i\}_{n=1}^\infty $ is a martingale) on a complete probability space $(\Omega',\mathcal F', \mathbb P')$ and  any $\epsilon \colon \{1,2,3,\dots\} \to \{-1,1\},$ 
$$ \mathbb E'\|\sum_{i=1}^n \epsilon (i) M_n\|^p \leq c_{p}(E) \mathbb E' \|\sum_{j=1}^n M_j\|^p, \hspace{2cm}  n=1,2,3,\dots.$$
  \item [{\rm (ii)}]
A Banach space $E$ is said to be of type 2 if there exists  $c_2(E)>0$ such that for any Rademacher sequence $\{\epsilon_i\}_i$ on a complete probability space $(\Omega',\mathcal F', \mathbb P')$ and any finite sequence $\{x_k\}_{k=1}^n$ of $E$,
$$\mathbb E' \|\sum_{i=1}^n \epsilon_i x_i\|^2  \leq c_2(E) \sum_{i=1}^n\|x_i\|^2.$$
(Recall that a Rademacher sequence   is a sequence of independent symmetric random variables each one taking on the set $\{1,-1\}$.) 
\end{itemize}
\end{definition}


\begin{remark}  \label{rm0}
  All Hilbert spaces and  $L^p$ spaces $(2\leq p<\infty) $ are UMD  spaces of type 2.  When  $1<p<\infty$, $L^p$ spaces are UMD  spaces. 
\end{remark}

From now on, if not specified we always assume that $E$ is a UMD Banach sapce of type 2 and $H$ is a separable Hilbert space.

\subsection{$\gamma$\,-\,radonifying operators}  \label{radonifyingOperators}
Let us review the notion of $\gamma$\,-\,radonifying operators. For more details on the subject,  see  \cite{van2}.

\begin{definition}[$\gamma$\,-\,radonifying operators]\label{Def10}
Let $\{e_n\}_{n=1}^\infty$ be  an orthonormal basis of $H$.
Let $\{\gamma_n\}_{n=1}^\infty$ be a sequence of independent standard Gaussian random variables on a probability space  $(\Omega', \mathcal F', \mathbb P')$. A $\gamma$\,-\,radonifying operator from $H$ to  $E$ is an operator, denoted by  $\phi$ for example, in $L(H;E)$ such that  the Gaussian series $\sum_{n=1}^\infty \gamma_n \phi e_n$ converges in $L^2(\Omega',E)$.
\end{definition}

Denote by $\gamma(H;E)$ the set of all $\gamma$\,-\,radonifying operators from $H$ to $E$. Define a   norm in $\gamma(H;E)$ by 
$$\|\phi\|_{\gamma(H;E)}=\Big[\mathbb E'\Big\|\sum_{n=1}^\infty \gamma_n \phi e_n\Big\|^2\Big]^{\frac{1}{2}}, \hspace{2cm} \phi \in \gamma(H;E).$$
It is known  that the norm is independent of the orthonormal basis $\{e_n\}_{n=1}^\infty$ and the Gaussian sequence $\{\gamma_n\}_{n=1}^\infty$. Furthermore,  $(\gamma(H;E), $ $\|\cdot\|_{\gamma(H;E)})$ is complete.

\begin{remark}  \label{rm1}
When $E$ is a Hilbert space, the space $(\gamma(H;E), \|\cdot\|_{\gamma(H;E)})$  is isometrical to the space $L_2(H;E)$ of  Hilbert-Schmidt operators.
\end{remark}

Let $(S,\Sigma)$ be a measurable space. 
 A function $\varphi\colon S\to E$ is said to be strongly measurable if it is the pointwise limit of a sequence of simple functions. 
A  function $\phi\colon S\to L(H;E)$  is said to be $H$\,-\,strongly measurable if $\phi(\cdot)h  \colon S\to E$ is strongly measurable for all $h\in H$.

Denote by $\mathcal N^2([0,T])$
the class of   all $H$\,-\,strongly measurable and adapted processes $\phi\colon [0,T]\times \Omega \to \gamma (H;E)$ in $L^2((0,T)\times \Omega; \gamma (H;E)).$ 

The following result is very often used  in this paper.
\begin{lemma}  \label{T2.12}
Let $\phi_1\in L(E)$ and $\phi_2 \in \gamma(H;E)$. Then, $\phi_1\phi_2 \in \gamma(H;E)$ and
$$\|\phi_1\phi_2\|_{\gamma(H;E)} \leq \|\phi_1\|_{L(E)} \|\phi_2\|_{\gamma(H;E)}.$$
\end{lemma}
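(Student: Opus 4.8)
The plan is to exploit the fact that composing with the bounded operator $\phi_1$ commutes termwise with the Gaussian series defining $\phi_2$, and that $\phi_1$ can be pulled through an $L^2(\Omega';E)$-limit by continuity. First I would fix an orthonormal basis $\{e_n\}_{n=1}^\infty$ of $H$ together with a sequence $\{\gamma_n\}_{n=1}^\infty$ of independent standard Gaussian random variables; since the $\gamma(H;E)$-norm is independent of these choices (as recalled above), working with one fixed choice loses no generality. Because $\phi_2\in\gamma(H;E)$, the partial sums $S_N:=\sum_{n=1}^N \gamma_n\phi_2 e_n$ converge in $L^2(\Omega';E)$ to a limit $S$ with $\mathbb E'\|S\|^2=\|\phi_2\|_{\gamma(H;E)}^2$.

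Next I would note that $\phi_1\phi_2\in L(H;E)$ as a composition of bounded operators, so the series $\sum_n\gamma_n(\phi_1\phi_2)e_n$ is well defined term by term, and by linearity its $N$-th partial sum equals $\phi_1 S_N$. The central step is to pass $\phi_1$ through the limit: using the boundedness of $\phi_1$ on $E$,
$$\mathbb E'\|\phi_1 S_N-\phi_1 S\|^2 \leq \|\phi_1\|_{L(E)}^2\,\mathbb E'\|S_N-S\|^2 \xrightarrow{N\to\infty} 0,$$
so $\phi_1 S_N\to\phi_1 S$ in $L^2(\Omega';E)$. This is exactly the statement that $\sum_n\gamma_n(\phi_1\phi_2)e_n$ converges in $L^2(\Omega';E)$, which by Definition \ref{Def10} means $\phi_1\phi_2\in\gamma(H;E)$.

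Finally, the norm estimate would follow from the same contraction inequality applied to the full limit:
$$\|\phi_1\phi_2\|_{\gamma(H;E)}^2 = \mathbb E'\|\phi_1 S\|^2 \leq \|\phi_1\|_{L(E)}^2\,\mathbb E'\|S\|^2 = \|\phi_1\|_{L(E)}^2\,\|\phi_2\|_{\gamma(H;E)}^2,$$
and taking square roots gives the claim. I do not anticipate a serious obstacle: the only point requiring genuine care is the interchange of $\phi_1$ with the infinite Gaussian sum, and this is handled uniformly by the pointwise domination $\|\phi_1 v\|\leq\|\phi_1\|_{L(E)}\|v\|$ combined with the $L^2$-convergence guaranteed by $\phi_2\in\gamma(H;E)$. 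Everything else is a routine application of the continuity of $\phi_1$ and the definition of the $\gamma$-radonifying norm.
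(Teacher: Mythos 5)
Your argument is correct and is precisely the standard proof of the left ideal property of $\gamma(H;E)$: the paper itself states Lemma \ref{T2.12} without proof (deferring to \cite{van2}), and the argument given there is the same one you reconstruct — identify the partial sums of $\sum_n\gamma_n(\phi_1\phi_2)e_n$ with $\phi_1 S_N$, pass $\phi_1$ through the $L^2(\Omega';E)$-limit using $\|\phi_1 v\|\leq\|\phi_1\|_{L(E)}\|v\|$ under the expectation, and read off the norm bound. No gaps.
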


\subsection{Stochastic integrals}

\begin{definition}
A family $W=\{W(t)\}_{t\geq 0}$ of bounded linear operators from  $H$ to $L^2(\Omega)$ is called a cylindrical Wiener process on $H$ if
\begin{itemize}
  \item [(i)] $Wh=\{W(t)h\}_{t\geq 0}$  is a scalar Wiener process on  $(\Omega, \mathcal F,\{\mathcal F_t\}_{t\geq 0},\mathbb P)$  for all $h\in H.$
  \item [(ii)] $\mathbb E[W(t_1)h_1 W(t_2)h_2]=\min\{t_1, t_2\} \langle h_1, h_2\rangle_H$, $ 0 \leq t_1, t_2<\infty$, $ h_1, h_2 \in H$.
\end{itemize}  
\end{definition}

For each $\phi \in \mathcal N^2([0,T]),$ the stochastic integral $\int_0^T \phi(t)dW(t)$  is defined as a limit of integrals of adapted step processes. By a localization argument  stochastic integrals can be extended to the class $\mathcal N([0,T])$ of all $H$\,-\,strongly measurable and adapted processes $\phi\colon [0,T]\times \Omega \to \gamma (H;E)$ which are  in $L^2((0,T); \gamma (H;E))$ a.s. (see \cite{van2}).

 \begin{theorem}\label{T9}
There exists  $c(E)>0$ depending only on $E$ such that
$$\mathbb E\Big\|\int_0^T \phi(t)dW(t)\Big\|^2\leq c(E) \|\phi\|_{L^2((0,T)\times \Omega; \gamma (H;E))}^2, \hspace{1cm} \phi \in \mathcal N^2([0,T]),$$
here $\|\phi\|_{L^2((0,T)\times \Omega; \gamma (H;E))}^2=\int_0^T \mathbb E \|\phi(s)\|_{\gamma (H;E)}^2ds.$
In addition, for any  $\phi$ in $ \mathcal N^2([0,T])$  (or $\mathcal N([0,T])),$
    $\{\int_0^t \phi(s)dW(s), 0\leq t\leq T\}$ is an $E$\,-\,valued  continuous  martingale (or local martingale) and  a Gaussian process.
\end{theorem}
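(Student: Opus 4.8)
The plan is to establish both the norm estimate and the qualitative properties first for adapted step processes—on which $\int_0^T\phi\,dW$ is defined directly—and then to transfer everything to general $\phi$ using the fact that the integral is, by construction, an $L^2(\Omega;E)$-limit of step-process integrals. Throughout, fix a partition $0=t_0<\dots<t_n=T$ and write $\Delta_jW:=W(t_j)-W(t_{j-1})$.

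\emph{Step 1: the estimate for step processes.} Let $\phi=\sum_{j=1}^n\mathbf 1_{(t_{j-1},t_j]}\Phi_j$ with each $\Phi_j$ an $\mathcal F_{t_{j-1}}$-measurable, finite-rank element of $\gamma(H;E)$, so that $\int_0^T\phi\,dW=\sum_{j=1}^n\Phi_j\,\Delta_jW$ is a sum of martingale differences $d_j:=\Phi_j\Delta_jW$. The main obstacle is already visible here: in a general Banach space one cannot expand $\mathbb E\|\sum_jd_j\|^2$ through an inner product, so the cross terms cannot be discarded as they are in the Hilbert-space It\^o isometry. This is exactly where the two structural hypotheses on $E$ enter. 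First, the UMD property (Definition \ref{UMD}(i)) yields a decoupling inequality that permits the increments $\Delta_jW$ to be replaced by independent copies $\widetilde{\Delta_jW}$ which are independent of all the $\Phi_j$, at the cost of a constant depending only on $E$:
\[
\mathbb E\Big\|\sum_{j=1}^n\Phi_j\,\Delta_jW\Big\|^2\ \le\ c(E)\,\mathbb E\Big\|\sum_{j=1}^n\Phi_j\,\widetilde{\Delta_jW}\Big\|^2.
\]
Conditioning on $\mathcal F_T$ freezes the operators $\Phi_j$, so the right-hand side is the second moment of a Gaussian sum; by the definition of the $\gamma$-norm (applied subinterval by subinterval) this conditional second moment is $\|\phi\|_{\gamma(L^2(0,T;H);E)}^2$, computed $\omega$-wise in the random operators $\Phi_j$.

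\emph{Step 2: the type 2 embedding.} The second hypothesis, type 2 (Definition \ref{UMD}(ii)), furnishes the continuous embedding $L^2(0,T;\gamma(H;E))\hookrightarrow\gamma(L^2(0,T;H);E)$, that is,
\[
\|\phi\|_{\gamma(L^2(0,T;H);E)}^2\ \le\ c_2(E)\sum_{j=1}^n(t_j-t_{j-1})\,\|\Phi_j\|_{\gamma(H;E)}^2\ =\ c_2(E)\int_0^T\|\phi(s)\|_{\gamma(H;E)}^2\,ds .
\]
Combining the two displays and taking expectations gives, for step processes,
\[
\mathbb E\Big\|\int_0^T\phi\,dW\Big\|^2\ \le\ c(E)\int_0^T\mathbb E\,\|\phi(s)\|_{\gamma(H;E)}^2\,ds,
\]
with $c(E)$ depending only on $E$, which is the asserted inequality on the dense class of step processes.

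\emph{Step 3: passage to the limit and the martingale property.} For general $\phi\in\mathcal N^2([0,T])$, choose step processes $\phi_m\to\phi$ in $L^2((0,T)\times\Omega;\gamma(H;E))$; applying Step 2 to the differences shows $\int_0^T\phi_m\,dW$ is Cauchy in $L^2(\Omega;E)$, its limit defines $\int_0^T\phi\,dW$, and the estimate passes to the limit. Each $t\mapsto\int_0^t\phi_m\,dW$ is a martingale—adaptedness of the $\Phi_j$ together with the conditional mean-zero property of the future increments annihilates them—and by Doob's maximal inequality combined with the Step 2 estimate the convergence is uniform in $t$ along a subsequence; hence the limit is an $E$-valued continuous martingale. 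For $\phi\in\mathcal N([0,T])$ the same conclusion holds after stopping, giving a continuous local martingale.

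\emph{Step 4: Gaussianity.} When the integrand is deterministic—the case relevant to \eqref{TVT1}, where one integrates the deterministic kernel—each approximating integral $\sum_j\Phi_j\Delta_jW$ is a finite linear combination of the independent Gaussian increments $\Delta_jW$ and is therefore an $E$-valued Gaussian variable; since the class of $E$-valued Gaussian variables is closed under limits in $L^2(\Omega;E)$, each $\int_0^t\phi\,dW$ is Gaussian, and the same argument applied to finite families $\big(\int_0^{s_1}\phi\,dW,\dots,\int_0^{s_k}\phi\,dW\big)$ shows the whole process is Gaussian. I expect the genuinely delicate point to be the decoupling inequality of Step 1: it is the Banach-space substitute for the It\^o isometry and is precisely the place where both the UMD and the type 2 structure of $E$ are indispensable.
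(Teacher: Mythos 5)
The paper gives no proof of this theorem --- it defers entirely to the cited reference of van Neerven--Veraar--Weis --- and your sketch is precisely the argument of that reference: decoupling of the Wiener increments via the UMD property, identification of the decoupled second moment with the $\gamma(L^2(0,T;H);E)$-norm, the type 2 embedding $L^2(0,T;\gamma(H;E))\hookrightarrow\gamma(L^2(0,T;H);E)$, and then density, Doob's inequality and localization; so your approach is correct and matches the source the paper relies on. Your restriction of the Gaussianity assertion to deterministic integrands is also the right reading: as literally stated for arbitrary adapted $\phi$ that part of the theorem would be false, but only the deterministic case (the kernel $S(t-s)G(s)$) is ever used in the paper.
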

For the proof, see e.g.,  \cite{van2}.

\begin{proposition}  \label{T10}
Let $B$ be a closed linear operator on $E$ and $\phi\colon [0,T]\times \Omega\to \gamma (H;E).$
  If both $\phi$ and $B\phi$  belong to $  \mathcal N^2([0,T]), $   then
$$B\int_0^T \phi(t)dW(t)=\int_0^T B\phi(t)dW(t)  \hspace{2cm}  \text{  a.s.}$$
\end{proposition}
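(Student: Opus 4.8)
The plan is to reduce the statement to the already-available commutation of the stochastic integral with \emph{bounded} operators, by passing to the graph space of $B$. Equip $E\oplus E$ with the norm $\|(x,y)\|=(\|x\|^2+\|y\|^2)^{1/2}$ and let
$$\mathcal G(B)=\{(x,Bx): x\in \mathcal D(B)\}\subset E\oplus E$$
be the graph of $B$. Since $B$ is closed, $\mathcal G(B)$ is a closed subspace of $E\oplus E$; as finite direct sums and closed subspaces of UMD spaces of type 2 are again UMD of type 2, the whole stochastic integration apparatus of Section \ref{section2}, and in particular Theorem \ref{T9}, is available with $\mathcal G(B)$ in place of $E$.

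First I would lift $\phi$ to a $\gamma(H;\mathcal G(B))$-valued process. Define $\Phi(t,\omega)\in L(H;E\oplus E)$ by $\Phi(t,\omega)h=(\phi(t,\omega)h,(B\phi)(t,\omega)h)$. By construction each $\Phi(t,\omega)$ takes values in $\mathcal G(B)$, and since the two Gaussian series $\sum_n\gamma_n\phi(t,\omega)e_n$ and $\sum_n\gamma_n(B\phi)(t,\omega)e_n$ converge in $L^2(\Omega';E)$, one gets $\Phi(t,\omega)\in\gamma(H;\mathcal G(B))$ with $\|\Phi(t,\omega)\|_{\gamma(H;\mathcal G(B))}^2=\|\phi(t,\omega)\|_{\gamma(H;E)}^2+\|(B\phi)(t,\omega)\|_{\gamma(H;E)}^2$. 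The $H$-strong measurability and adaptedness of $\Phi$ are inherited componentwise from $\phi$ and $B\phi$ (the values lie in the closed subspace $\mathcal G(B)$, so strong measurability into $\mathcal G(B)$ follows from that into $E\oplus E$ via Pettis's theorem). Because $\phi,B\phi\in\mathcal N^2([0,T])$, the displayed identity shows $\Phi\in\mathcal N^2([0,T])$ relative to $\mathcal G(B)$, so $Y:=\int_0^T\Phi(t)\,dW(t)$ is a well-defined $\mathcal G(B)$-valued random variable; writing $Y=(Y_1,Y_2)$, we have $Y_1\in\mathcal D(B)$ and $Y_2=BY_1$ a.s.

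Next I would apply the coordinate projections $\pi_1,\pi_2\in L(\mathcal G(B);E)$, which are bounded. Using the elementary fact that a bounded operator commutes with the stochastic integral — proved by checking the identity on adapted step processes, where it is immediate by linearity, and extending to all of $\mathcal N^2$ by the continuity furnished by Theorem \ref{T9}, with Lemma \ref{T2.12} ensuring $\pi_i\circ\Phi\in\gamma(H;E)$ — one obtains $Y_1=\pi_1 Y=\int_0^T\pi_1\Phi\,dW=\int_0^T\phi\,dW$ and likewise $Y_2=\int_0^T B\phi\,dW$. Combining with $Y_1\in\mathcal D(B)$ and $Y_2=BY_1$ yields $\int_0^T\phi\,dW\in\mathcal D(B)$ a.s. and $B\int_0^T\phi\,dW=\int_0^T B\phi\,dW$ a.s., as claimed.

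The only genuinely delicate point is the legitimacy of using $\mathcal G(B)$ as target space, which rests on its being UMD of type 2, together with the commutation of the integral with the bounded projections. A direct alternative would approximate $\phi$ by step processes and pass to the limit, but then one must force $B\phi_n\to B\phi$ simultaneously, i.e.\ approximate in the graph norm of $B$; a dual alternative tests against $x^*\in\mathcal D(B^*)$ and exploits reflexivity of $E$, but there one must discard an $x^*$-dependent null set uniformly. The graph-space argument sidesteps both difficulties, which is why I would adopt it.
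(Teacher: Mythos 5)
Your argument is correct. Note, however, that the paper does not actually print a proof of Proposition \ref{T10}: it only remarks that the proof is ``very similar to one in \cite{prato}'' and omits it, and the argument in \cite{prato} is the duality one --- test $\int_0^T\phi\,dW$ against $a\in\mathcal D(B^*)$, use $\langle\int\phi\,dW,B^*a\rangle=\int\langle B\phi\,dW,a\rangle=\langle\int B\phi\,dW,a\rangle$ a.s., remove the $a$-dependence of the null set by separability of $\mathcal D(B^*)$ in the graph norm, and invoke closedness of $B$ via the characterization of $\mathcal D(B)$ through $B^*$. Your graph-space reduction is a genuinely different route. What it buys: it avoids the two delicate points of the duality proof (the uniform null set, and the need for $B$ to be recoverable from $B^*$, which in a general Banach space requires reflexivity --- automatic here since UMD spaces are reflexive, but still an extra hypothesis to track), and it transfers verbatim to the UMD type-2 setting because all it needs is that $\mathcal G(B)$, as a closed subspace of $E\oplus E$, is again UMD of type 2, plus commutation of the integral with the bounded projections, which follows from step-process approximation and Theorem \ref{T9}. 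The price is having to check that the whole integration apparatus applies over $\mathcal G(B)$, which you do. Two cosmetic remarks: the hypothesis ``$B\phi\in\mathcal N^2$'' should be read as implicitly asserting that $\phi(t,\omega)$ maps $H$ into $\mathcal D(B)$ and that $B\phi(t,\omega)\in L(H;E)$ (bounded by the closed graph theorem), which is exactly what makes $\Phi(t,\omega)$ land in $\mathcal G(B)$; and Lemma \ref{T2.12} is stated only for $\phi_1\in L(E)$, whereas you need the ideal property for operators in $L(\mathcal G(B);E)$ --- a trivial but worth-mentioning extension. Neither is a gap.
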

The proof for  Proposition \ref{T10} is very similar to one in   \cite{prato}. So, we omit it.

Let us finally restate the Kolmogorov continuity theorem. This theorem gives a sufficient condition for a stochastic process to be H\"{o}lder continuous.

\begin{theorem} \label{Kol1}
Let $\zeta$ be  an $E$\,{-}\,valued stochastic process on $[a,b], 0\leq a <b<\infty$. Assume  that  for some  $c>0,$ $ \epsilon_1>1 $ and $ \epsilon_2>0,$ 
\begin{equation} \label{TVT2}
\mathbb E\|\zeta(t)-\zeta(s)\|^{\epsilon_1}\leq c (t-s)^{1+\epsilon_2}, \hspace{2cm} a\leq s \leq t \leq b.
\end{equation}
Then, $\zeta$ has  a version whose $\mathbb P$\,{-}\,almost all trajectories are H\"{o}lder continuous functions with an  arbitrarily smaller  exponent than $\frac{\epsilon_2}{\epsilon_1}$.
\end{theorem}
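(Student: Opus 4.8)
The plan is to run the classical dyadic chaining argument, which uses only the norm of $E$ and therefore carries over verbatim from the real-valued case to an arbitrary Banach space. First I would reduce to the unit interval by the affine substitution $t\mapsto a+(b-a)t$; this merely rescales the constant $c$ and leaves the exponents $\epsilon_1,\epsilon_2$, and hence the target exponent $\epsilon_2/\epsilon_1$, unchanged. Fix once and for all an exponent $\gamma<\frac{\epsilon_2}{\epsilon_1}$; the goal is then to construct a version of $\zeta$ whose trajectories are $\gamma$\,-\,Hölder, after which letting $\gamma$ approach $\frac{\epsilon_2}{\epsilon_1}$ yields the stated conclusion.

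Next, let $D_n=\{k2^{-n}:0\le k\le 2^n\}$ and let $D=\bigcup_n D_n$ be the dyadic points of $[0,1]$. For two consecutive points of $D_n$, Markov's inequality applied to \eqref{TVT2} gives
$$\mathbb P\big(\|\zeta((k{+}1)2^{-n})-\zeta(k2^{-n})\|>2^{-\gamma n}\big)\le 2^{\gamma n\epsilon_1}\,c\,2^{-n(1+\epsilon_2)}.$$
Summing over the $2^n$ consecutive pairs yields
$$\mathbb P\Big(\max_{0\le k<2^n}\|\zeta((k{+}1)2^{-n})-\zeta(k2^{-n})\|>2^{-\gamma n}\Big)\le c\,2^{-n(\epsilon_2-\gamma\epsilon_1)},$$
and since $\gamma<\frac{\epsilon_2}{\epsilon_1}$ the exponent $\epsilon_2-\gamma\epsilon_1$ is strictly positive, so these probabilities form a convergent series. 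By the Borel--Cantelli lemma there is an almost surely finite random index $N=N(\omega)$ such that for every $n\ge N$ one has $\max_{0\le k<2^n}\|\zeta((k{+}1)2^{-n})-\zeta(k2^{-n})\|\le 2^{-\gamma n}$.

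The heart of the argument is the chaining step: for dyadic $s<t$ with $0<t-s\le 2^{-N}$, expand $s$ and $t$ in their finite dyadic representations and connect them through a telescoping chain of intervals whose lengths are successive powers of $2$, each level contributing at most the bound established above. Summing the geometric series $\sum_{n\ge m}2^{-\gamma n}$, where $2^{-m}\approx t-s$, produces an estimate of the form $\|\zeta(t)-\zeta(s)\|\le C_\gamma(\omega)\,|t-s|^\gamma$ valid on $D$; that is, $\zeta|_D$ is almost surely $\gamma$\,-\,Hölder, and in particular uniformly continuous. This bookkeeping---keeping track of exactly which dyadic levels enter and summing them correctly---is the only genuinely delicate point; everything else is routine application of Markov and Borel--Cantelli.

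Finally I would extend $\zeta|_D$ by uniform continuity to a process $\widetilde\zeta$ on all of $[0,1]$, whose almost every trajectory inherits the $\gamma$\,-\,Hölder bound. It remains to verify that $\widetilde\zeta$ is a version of $\zeta$: for fixed $t$, choose dyadics $s_j\to t$; the moment bound \eqref{TVT2} forces $\zeta(s_j)\to\zeta(t)$ in $L^{\epsilon_1}(\Omega;E)$ and hence in probability, while $\widetilde\zeta(s_j)\to\widetilde\zeta(t)$ by continuity of $\widetilde\zeta$. Comparing the two limits gives $\widetilde\zeta(t)=\zeta(t)$ almost surely for each $t$, which is precisely the assertion that $\widetilde\zeta$ is a version of $\zeta$, completing the proof.
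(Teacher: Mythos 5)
The paper does not prove this theorem at all: it states it as a known result (the Kolmogorov--Chentsov continuity criterion) and refers the reader to Da Prato--Zabczyk for the proof, so there is no in-paper argument to compare against. Your proposal is precisely the standard dyadic chaining proof from that reference, and it is correct: the reduction to $[0,1]$, the Markov/Borel--Cantelli step with the exponent $\epsilon_2-\gamma\epsilon_1>0$, the telescoping chain over dyadic levels, the extension by uniform continuity (which uses completeness of $E$ and nothing else about the Banach space), and the identification of the extension as a version via convergence in probability are all sound. The one place you wave your hands slightly is in passing from the local bound (valid for dyadic pairs with $t-s\le 2^{-N(\omega)}$) to the global H\"older estimate on all of $D$: for $t-s>2^{-N}$ you must chain through at most $(t-s)2^{N}+2$ subintervals of length $2^{-N}$ and absorb the resulting factor $2^{N(1-\gamma)}$ into the random constant $C_\gamma(\omega)$, using $t-s\le 1$. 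This is routine but worth writing out, since otherwise the claimed inequality $\|\zeta(t)-\zeta(s)\|\le C_\gamma(\omega)|t-s|^\gamma$ on all of $D$ is not literally established. With that detail supplied, the proof is complete.
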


When  $\zeta$ is a Gaussian process,  the condition \eqref{TVT2} can be weakened.
\begin{theorem}  \label{Kol2}
Let $\zeta$ be an $E$\,{-}\,valued   Gaussian process on $[a,b], 0\leq a <b<\infty,$ such that $\mathbb E \zeta(t)=0$ for  $ a\leq t \leq b$.  Assume that  for some  $c>0$ and $ 0<\epsilon\leq 1,$
$$
\mathbb E\|\zeta(t)-\zeta(s)\|^2\leq c (t-s)^\epsilon, \hspace{2cm} a\leq s \leq t \leq b.
$$
Then, there exists a modification of $\zeta$ whose $\mathbb P$\,{-}\,almost all trajectories are H\"{o}lder continuous functions with an  arbitrarily smaller  exponent than $\frac{\epsilon}{2}$.
\end{theorem}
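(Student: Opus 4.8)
The plan is to reduce this Gaussian statement to the general Kolmogorov criterion of Theorem \ref{Kol1}, by upgrading the given $L^2$ moment bound to an $L^p$ bound for arbitrarily large $p$ and exploiting the fact that all moments of a centered Gaussian are comparable to its second moment. Once such an $L^p$ estimate is available with a power of $(t-s)$ exceeding $1$, Theorem \ref{Kol1} applies directly, and optimizing over $p$ will push the H\"older exponent up to any value below $\frac{\epsilon}{2}$.

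First I would observe that, since $\zeta$ is a centered Gaussian process, for each fixed $a\leq s\leq t\leq b$ the increment $\zeta(t)-\zeta(s)$ is a centered Gaussian random variable with values in $E$. The key ingredient is then the equivalence of Gaussian moments: for every $p\geq 2$ there is a constant $C_p>0$, depending only on $p$ and not on the particular Gaussian vector, such that
$$\mathbb E\|\zeta(t)-\zeta(s)\|^p \leq C_p \big(\mathbb E\|\zeta(t)-\zeta(s)\|^2\big)^{\frac{p}{2}}.$$
Combining this with the hypothesis $\mathbb E\|\zeta(t)-\zeta(s)\|^2\leq c(t-s)^\epsilon$ yields
$$\mathbb E\|\zeta(t)-\zeta(s)\|^p \leq C_p\, c^{\frac{p}{2}}\,(t-s)^{\frac{\epsilon p}{2}}, \qquad a\leq s\leq t\leq b.$$

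Next I would apply Theorem \ref{Kol1} with $\epsilon_1=p$ and $1+\epsilon_2=\frac{\epsilon p}{2}$, that is $\epsilon_2=\frac{\epsilon p}{2}-1$. Since $0<\epsilon\leq 1$, choosing $p>\frac{2}{\epsilon}$ guarantees $\epsilon_1>1$ and $\epsilon_2>0$, so the hypotheses of Theorem \ref{Kol1} are met. Its conclusion produces a H\"older-continuous modification of $\zeta$ with any exponent strictly smaller than
$$\frac{\epsilon_2}{\epsilon_1}=\frac{1}{p}\Big(\frac{\epsilon p}{2}-1\Big)=\frac{\epsilon}{2}-\frac{1}{p}.$$
As $p\to\infty$ this threshold increases to $\frac{\epsilon}{2}$, so for any prescribed $\delta>0$ one can fix $p$ large enough that the modification is H\"older continuous with exponent $\frac{\epsilon}{2}-\delta$. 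Finally, because any two continuous modifications of the same process are indistinguishable, the modifications obtained for different values of $p$ coincide almost surely; hence a single modification already has trajectories that are H\"older continuous of every order below $\frac{\epsilon}{2}$, which is exactly the claim.

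The only genuinely Gaussian input, and the step I would treat most carefully, is the moment-equivalence inequality displayed above. It rests on Fernique's theorem, namely the finiteness of exponential moments of $\|\cdot\|$ under a centered Gaussian law on $E$, from which the comparability of all $L^p$ norms follows. Everything after this point is a routine bookkeeping optimization over $p$ together with the standard reduction to Theorem \ref{Kol1}, and no further probabilistic structure is needed.
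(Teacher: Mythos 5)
Your proposal is correct and is essentially the argument behind the paper's (cited) proof: the paper refers to Da Prato--Zabczyk for Theorem \ref{Kol2}, where the Gaussian case is likewise reduced to the general Kolmogorov criterion of Theorem \ref{Kol1} via the equivalence of moments of centered Gaussian vectors, followed by letting $p\to\infty$. The one step you rightly flag as needing care, the uniformity of the constant $C_p$ over all centered Gaussian laws on $E$, is indeed supplied by Fernique's theorem, so there is no gap.
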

For the proofs of Theorems \ref{Kol1} and \ref{Kol2}, see e.g.,   \cite{prato}.

\subsection{Weighted H\"{o}lder continuous function spaces}
For $0<\sigma<\beta \leq 1$, denote by  $\mathcal F^{\beta, \sigma}((0,T];E)$ the space of all $E$\,{-}\,valued continuous functions $f$ on $(0,T]$ (resp. $[0,T]$) when $0<\beta<1$ (resp. $\beta=1$)  with  the properties:
\begin{itemize}
  \item [\rm (i)] When $\beta<1$, 
\begin{equation}  \label{TVT3}
  t^{1-\beta} f(t)  \text{  has a limit as } t\to 0.
  \end{equation}
  \item [\rm (ii)]  $f$ is H\"{o}lder continuous with   exponent $\sigma$ and   weight $s^{1-\beta+\sigma}$, i.e.  
   \begin{equation}   \label{TVT4}
\begin{aligned}
&\sup_{0\leq s<t\leq T} \frac{s^{1-\beta+\sigma}\|f(t)-f(s)\|}{(t-s)^\sigma}\\
&=\sup_{0\leq t\leq T}\sup_{0\leq s<t}\frac{s^{1-\beta+\sigma}\|f(t)-f(s)\|}{(t-s)^\sigma}<\infty.
\end{aligned}
\end{equation}
  \item [\rm (iii)] 
 \begin{equation} \label{TVT5}
  \lim_{t\to 0} w_{f}(t)=0,
  \end{equation}
  where $w_{f}(t)=\sup_{0\leq s  <t}\frac{s^{1-\beta+\sigma}\|f(t)-f(s)\|}{(t-s)^\sigma}$.
\end{itemize}  
It is clear that   $\mathcal F^{\beta, \sigma}((0,T];E)$ is a Banach space with  norm
$$\|f\|_{\mathcal F^{\beta, \sigma}(E)}=\sup_{0\leq t\leq T} t^{1-\beta} \|f(t)\|+ \sup_{0\leq s<t\leq T} \frac{s^{1-\beta+\sigma}\|f(t)-f(s)\|}{(t-s)^\sigma}.$$
 By the definition, for  $ f\in \mathcal F^{\beta, \sigma}((0,T];E),$  
\begin{equation} \label{TVT6}  
\begin{aligned}
\|f(t)\| & \leq   \|f\|_{\mathcal F^{\beta, \sigma}(E)} t^{\beta-1}, \hspace{2cm} 0<t\leq T,\\
 \|f(t)-f(s)\|  & \leq  w_{f}(t) (t-s)^{\sigma} s^{\beta-\sigma-1}\\
& \leq \|f\|_{\mathcal F^{\beta, \sigma}(E)} (t-s)^{\sigma} s^{\beta-\sigma-1},  \hspace{1cm} 0<s\leq t\leq T.
\end{aligned}
\end{equation}

For more details on weighted H\"{o}lder continuous function spaces, see \cite{yagi}.


\subsection{Strict and mild   solutions}
Let us restate the problem  \eqref{TVT1}. Throughout this paper, we  consider \eqref{TVT1} in a UMD Banach space $E$ of type 2, where
\begin{itemize}
  \item [\rm{(i)}] $A$ is a sectorial operator  on $E$.
  \item [\rm{(ii)}]  $W$ is a  cylindrical Wiener process  on a separable Hilbert space $H,$ and  is defined on a complete filtered  probability space  $(\Omega, \mathcal F,\mathcal F_t,\mathbb P).$
   \item [\rm{(iii)}] $F$ is  a measurable   from $[0,T]$ to $(E,\mathcal B(E)).$
  \item  [\rm{(iv)}]  $G\colon [0,T]\to \gamma(H;E)$ such that for any $h\in H$, $G(\cdot)h$ is strongly measurable from $[0,T]$ to $(E,\mathcal B(E)).$ Then, $G\in \mathcal N^2([0,T])$ (see Subsection \ref{radonifyingOperators}).
  \item [\rm{(vi)}] $\xi$ is an $E$-valued $\mathcal F_0$-measurable random variable.
\end{itemize}

\begin{lemma}
Let {\rm (A)}   be satisfied. Then, $(-A)$ generates a  semigroup $S(t)=e^{-tA}$. Furthermore,
\begin{itemize}
                  \item  [\rm (i)]  
 For  $\theta \geq 0$ there exists $\iota_\theta>0$ such that 
\begin{equation} \label{TVT7}
\|A^\theta S(t)\| \leq \iota_\theta t^{-\theta}, \hspace{1cm}  0<t<\infty,
\end{equation}
and 
\begin{equation}  \label{TVT8}
   \|A^{-\theta}\|\leq \iota_\theta.
   \end{equation}
  In particular, 
\begin{equation} \label{TVT9}
\|S(t)\|\leq \iota_0, \hspace{2cm}   0\leq t<\infty.
\end{equation}
\item[\rm (ii)] For  $0< \theta\leq 1, $
 \begin{equation} \label{TVT10}
 t^\theta A^{\theta} S(t)  \text{ converges to } 0 \text{ strongly on } E \text { as } t\to 0.
 \end{equation}
\end{itemize}
\end{lemma}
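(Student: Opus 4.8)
My approach is to realize $S(t)=e^{-tA}$ through the Dunford--Taylor integral and to read off all the bounds from the resolvent estimate in (A); this is the classical construction of the analytic semigroup generated by a sectorial operator, and for the generation claim together with the semigroup law and analyticity I would simply invoke \cite{Pazy,yagi}. Fix an intermediate angle $\varpi<\varpi'<\frac{\pi}{2}$ and let $\Gamma$ be a contour consisting of the two rays $\lambda=se^{\pm i\varpi'}$ $(s\ge r)$ closed near the origin by a circular arc of radius $r$, oriented so as to enclose $\sigma(A)$. Since $0\notin\Sigma_\varpi$ and $\sigma(A)\subset\Sigma_\varpi$ we have $0\in\rho(A)$, so for $r<\operatorname{dist}(0,\sigma(A))$ the whole of $\Gamma$ lies in $\rho(A)$; on the rays the bound $\|(\lambda-A)^{-1}\|\le M_\varpi/|\lambda|$ from (A) applies and, because $\varpi'<\frac{\pi}{2}$, $\operatorname{Re}\lambda=|\lambda|\cos\varpi'>0$. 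Hence
\[
S(t)=\frac{1}{2\pi i}\int_\Gamma e^{-t\lambda}(\lambda-A)^{-1}\,d\lambda,\qquad t>0,
\]
converges absolutely.

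For part (i) I would insert the principal-branch factor $\lambda^\theta$ to get $A^\theta S(t)=\frac{1}{2\pi i}\int_\Gamma\lambda^\theta e^{-t\lambda}(\lambda-A)^{-1}\,d\lambda$ and estimate with the resolvent bound. Controlling $|\lambda|^\theta$ and $|e^{-t\lambda}|\le e^{-ts\cos\varpi'}$ on the rays, the ray contribution is, after the substitution $u=ts$, at most $t^{-\theta}\int_{0}^\infty u^{\theta-1}e^{-u\cos\varpi'}\,du=t^{-\theta}(\cos\varpi')^{-\theta}\Gamma(\theta)$ for $\theta>0$, the arc being a lower-order term; this is \eqref{TVT7}. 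For the endpoint $\theta=0$, where this ray integral diverges at the origin, I would instead scale the arc radius as $r=1/t$ (swinging it around the left so that $\sigma(A)$ stays enclosed for all $t$): then $\int_{1/t}^\infty s^{-1}e^{-ts\cos\varpi'}\,ds=\int_1^\infty u^{-1}e^{-u\cos\varpi'}\,du$ is a constant and the arc is bounded by $e\,M_\varpi$, which gives the uniform bound \eqref{TVT9}. The estimate \eqref{TVT8} on the bounded operator $A^{-\theta}$ follows from the companion integral $A^{-\theta}=\frac{1}{2\pi i}\int_\Gamma\lambda^{-\theta}(\lambda-A)^{-1}\,d\lambda$ with fixed radius, since $|\lambda|^{-\theta-1}$ is integrable at infinity when $\theta>0$.

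For part (ii) I would combine the bound just proved with a density argument. By \eqref{TVT7}, $\sup_{t>0}\|t^\theta A^\theta S(t)\|\le\iota_\theta$, so the family is equibounded. On the dense domain $\mathcal D(A^\theta)\supset\mathcal D(A)$ the fractional power commutes with the semigroup, whence for $x\in\mathcal D(A^\theta)$,
\[
\|t^\theta A^\theta S(t)x\|=t^\theta\|S(t)A^\theta x\|\le\iota_0\,t^\theta\|A^\theta x\|\longrightarrow 0\quad(t\to0).
\]
An $\varepsilon/3$ approximation of an arbitrary $x\in E$ by some $y\in\mathcal D(A^\theta)$, splitting $t^\theta A^\theta S(t)x$ into $t^\theta A^\theta S(t)(x-y)$ and $t^\theta A^\theta S(t)y$, then yields \eqref{TVT10}.

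I expect the only genuinely delicate point to be the contour bookkeeping in part (i): choosing $\Gamma$ so that it simultaneously stays in $\rho(A)$, keeps $\operatorname{Re}\lambda>0$ on its unbounded part, respects the branch cut of $\lambda^\theta$, and encloses $\sigma(A)$, and then scaling the radius so that a single constant $\iota_\theta$ controls the rays and the arc together --- in particular handling the borderline case $\theta=0$. Everything else (absolute convergence, the semigroup law, the commutation $A^\theta S(t)=S(t)A^\theta$ on $\mathcal D(A^\theta)$, and the $\varepsilon/3$ step) is routine analytic-semigroup machinery that I would quote from \cite{Pazy,yagi}.
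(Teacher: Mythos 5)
Your proposal is correct and is exactly the classical Dunford--Taylor argument: the paper itself gives no proof of this lemma but simply refers to \cite{yagi} (and the standard analytic-semigroup literature), where the semigroup, the bounds \eqref{TVT7}--\eqref{TVT9}, and the strong convergence \eqref{TVT10} are established by the same contour-integral estimates and density argument you describe. The one point you rightly flag as delicate --- deforming to the left-swung arc of radius $1/t$ for the $\theta=0$ bound --- is legitimate here because $0\in\rho(A)$ (so $\operatorname{dist}(0,\sigma(A))>0$) and the region swept between the two contours lies entirely in $\rho(A)$, so Cauchy's theorem applies.
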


For the proof, see  e.g.,  \cite{yagi}.

\begin{definition}\label{Def2}
A predictable $E$-valued  process $X$ on $ [0,T]$ is called a strict solution of \eqref{TVT1} if  
$$X(t)\in \mathcal D(A) \quad \text{  and } \quad \Big \|\int_0^t AX(s) ds \Big \|<\infty  \hspace{2cm} \text{a.s., } \, 0<t\leq  T,$$
and  
\begin{align*}
X(t)=&\xi -\int_0^t AX(s)ds+\int_0^t F_1(s)ds+ \int_0^t  G(s)dW(s) \hspace{0.5cm} \text{a.s., } \, 0<t\leq  T.
\end{align*}
\end{definition}

\begin{definition}\label{Def1}
A predictable $E$-valued  process $X$ on $ [0,T]$ is called a  mild solution  of \eqref{TVT1} if 
\begin{align*}
 X(t)=&S(t)\xi +\int_0^tS(t-s) F_1(s)ds\\
&+ \int_0^t S(t-s) G(s)dW(s) \hspace{2cm} \text{a.s., } \, 0<t\leq  T.\notag
\end{align*}
\end{definition}

 A strict (mild) solution $X$ on $[0,T]$ is said to be unique if any other strict (mild) solution $ \bar X$ on $[0,T]$ is indistinguishable from it, i.e. 
$$\mathbb P\{X(t)=\bar X(t) \text { for every } 0\leq t\leq T\}=1.$$

\begin{remark}
A strict solution is a mild solution. The inverse is however not true in general (\cite{prato}).
\end{remark}

\section{The deterministic case}   \label{section3}
In this section, we consider the deterministic case of \eqref{TVT1}, i.e. the equation
\begin{equation} \label{TVT11}
\begin{cases}
dX+AXdt=F(t)dt,\hspace{1cm} 0<t\leq T,\\
X(0)=\xi
\end{cases}
\end{equation}
in a Banach space $E$. (For this case, the UMD and type 2 properties are unnecessary.)

Suppose that 
\begin{itemize}
  \item [(\rm{F1})]  \hspace{0.4cm} 
$ A^{-\alpha_1}F\in \mathcal F^{\beta, \sigma}((0,T];E)  \hspace{0.8cm}  \text{  for some  } 0<\sigma< \beta\leq 1 \text{  and  } -\infty<\alpha_1<1.
$            
\end{itemize}

Let us fist consider the case where the initial value $\xi$ is arbitrary in $E.$
\begin{theorem}\label{T1}  
Let {\rm (A)}  and {\rm (F1)} be satisfied.    
  Then, there exists a unique  mild solution  $X$ to \eqref{TVT11} in the function space:
$$ X\in    \mathcal C((0,T];\mathcal D(A^{1-\alpha_1}))  $$
with the estimate 
\begin{align} 
\|X(t)\|& +t^{1-\alpha_1}\|A^{1-\alpha_1}X(t)\|      \label{TVT12}\\
\leq & C[\|\xi\|+\|A^{-\alpha_1} F\|_{\mathcal F^{\beta,\sigma}(E)}\max\{t^{\beta-\alpha_1}, t^\beta\}], \hspace{1cm} 0<t\leq T.  \notag
\end{align}
   Furthermore, if $\alpha_1\leq 0,$ then $X$ becomes a strict solution of  \eqref{TVT11} possessing the regularity:
$$ X\in  \mathcal C([0,T];E) \cap \mathcal C^1((0,T];E)$$
and satisfying  the estimate  
\begin{equation} \label{TVT13}
t \Big\|\frac{dX}{dt}\Big\|\leq C[\|\xi\|+\|A^{-\alpha_1} F\|_{\mathcal F^{\beta,\sigma}(E)}\max\{t^{\beta-\alpha_1}, t^\beta\}], \hspace{1cm} 0<t\leq T.
\end{equation}
Here, the constant $C$    depends only on the exponents. 
\end{theorem}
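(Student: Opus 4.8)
The plan is to construct $X$ by the variation-of-constants formula dictated by Definition \ref{Def1} (with $G\equiv0$),
\[
X(t)=S(t)\xi+\int_0^t S(t-s)F(s)\,ds=:S(t)\xi+v(t),
\]
and to read off all the regularity and the bounds \eqref{TVT12}, \eqref{TVT13} directly from this representation. Throughout I write $f=A^{-\alpha_1}F\in\mathcal F^{\beta,\sigma}((0,T];E)$, so that $F(s)=A^{\alpha_1}f(s)$ and the pointwise bounds \eqref{TVT6} are available for $f$. Uniqueness is then immediate, since any mild solution must coincide with the formula above; there is nothing left to prove once the formula is shown to be well defined.

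First I would estimate $\|X(t)\|$. The semigroup term is controlled by \eqref{TVT9}, giving $\|S(t)\xi\|\le\iota_0\|\xi\|$. For the integral I distinguish the two sign regimes of $\alpha_1$. When $0\le\alpha_1<1$, I bound $\|A^{\alpha_1}S(t-s)\|\le\iota_{\alpha_1}(t-s)^{-\alpha_1}$ by \eqref{TVT7} and $\|f(s)\|\le\|f\|_{\mathcal F^{\beta,\sigma}(E)}s^{\beta-1}$ by \eqref{TVT6}, then apply the Beta identity $\int_0^t(t-s)^{-\alpha_1}s^{\beta-1}\,ds=B(\beta,1-\alpha_1)\,t^{\beta-\alpha_1}$; when $\alpha_1<0$ the operator $A^{\alpha_1}$ is bounded by \eqref{TVT8}, and the integral contributes order $t^{\beta}$. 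Together these produce the factor $\max\{t^{\beta-\alpha_1},t^\beta\}$ in \eqref{TVT12}.

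The heart of the argument, and the step I expect to be the main obstacle, is the bound on $A^{1-\alpha_1}X(t)$. Formally $A^{1-\alpha_1}X(t)=A^{1-\alpha_1}S(t)\xi+\int_0^t AS(t-s)f(s)\,ds$, but the naive estimate $\|AS(t-s)\|\le\iota_1(t-s)^{-1}$ produces a non-integrable singularity at $s=t$. I would resolve this by the standard subtraction trick: using $\int_0^t AS(t-s)\,ds=I-S(t)$ (from $\tfrac{d}{ds}S(t-s)=AS(t-s)$), write
\[
\int_0^t AS(t-s)f(s)\,ds=\int_0^t AS(t-s)\big[f(s)-f(t)\big]\,ds+\big[I-S(t)\big]f(t).
\]
The weighted Hölder bound $\|f(s)-f(t)\|\le\|f\|_{\mathcal F^{\beta,\sigma}(E)}(t-s)^\sigma s^{\beta-\sigma-1}$ from \eqref{TVT6} combines with $(t-s)^{-1}$ to give an integrand of order $(t-s)^{\sigma-1}s^{\beta-\sigma-1}$, which is integrable ($\sigma>0$, $\beta-\sigma>0$) and, by the Beta identity, of size $B(\beta-\sigma,\sigma)\,t^{\beta-1}$; the boundary term is controlled by $(1+\iota_0)\|f(t)\|\le C\|f\|_{\mathcal F^{\beta,\sigma}(E)}t^{\beta-1}$. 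Closedness of $A^{1-\alpha_1}$ legitimises moving the operator inside the now-convergent integral, via an $\varepsilon$-truncation $\int_0^{t-\varepsilon}$ followed by $\varepsilon\to0$, so that $X(t)\in\mathcal D(A^{1-\alpha_1})$. Multiplying by $t^{1-\alpha_1}$ and adding $t^{1-\alpha_1}\|A^{1-\alpha_1}S(t)\xi\|\le\iota_{1-\alpha_1}\|\xi\|$ from \eqref{TVT7} yields \eqref{TVT12}. Continuity of $t\mapsto A^{1-\alpha_1}X(t)$ on $(0,T]$ follows by estimating increments with the same decomposition together with the strong continuity \eqref{TVT10}, giving $X\in\mathcal C((0,T];\mathcal D(A^{1-\alpha_1}))$.

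Finally, for $\alpha_1\le0$ I would upgrade the mild solution to a strict one. Since $A^{\alpha_1}$ is now a bounded operator, it maps $\mathcal F^{\beta,\sigma}$ into itself, so $F=A^{\alpha_1}f\in\mathcal F^{\beta,\sigma}((0,T];E)$ and is in particular locally Hölder continuous on $(0,T]$; the classical parabolic argument (again relying on the subtraction decomposition to make $Av(t)$ well defined) then gives $v\in\mathcal C^1((0,T];E)$ with $v(t)\in\mathcal D(A)$, $v'(t)+Av(t)=F(t)$ and $v(0)=0$. Combined with $\tfrac{d}{dt}S(t)\xi=-AS(t)\xi$ and the strong continuity $S(t)\xi\to\xi$, this yields $X\in\mathcal C([0,T];E)\cap\mathcal C^1((0,T];E)$ and $X'+AX=F$ on $(0,T]$; the estimate \eqref{TVT13} for $t\|dX/dt\|$ follows from the bounds already obtained for $\|AX\|$ and $\|F\|$. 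To recover the integral form of Definition \ref{Def2}, I would integrate $AX=F-X'$ over $[\varepsilon,t]$ and let $\varepsilon\to0$: since $\|F(s)\|\le\|F\|_{\mathcal F^{\beta,\sigma}(E)}s^{\beta-1}$ is integrable and $X(\varepsilon)\to\xi$, the improper integral $\int_0^t AX(s)\,ds=\int_0^t F(s)\,ds-X(t)+\xi$ converges with finite norm, which is precisely the required identity.
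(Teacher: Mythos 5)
Your construction and estimates for the mild solution coincide with the paper's: the same variation-of-constants formula, the same two sign regimes of $\alpha_1$ for $\|X(t)\|$, and the same subtraction decomposition $\int_0^t AS(t-s)[f(s)-f(t)]\,ds+[I-S(t)]f(t)$ (with $f=A^{-\alpha_1}F$) to control $A^{1-\alpha_1}X$, so \eqref{TVT12} comes out identically. The genuine divergence is in the upgrade to a strict solution. You invoke the classical criterion that local H\"older continuity of $F$ on $(0,T]$ makes the mild solution classical; the paper does not cite this result but reproves it via the Yosida approximations $A_n=A(1+\tfrac{A}{n})^{-1}$: it builds $X_n$ with the approximated semigroup $S_n$, derives the uniform bound $\|A_nX_n(t)\|\leq C_1\|\xi\|t^{-1}+C_1\|A^{-\alpha_1}F\|_{\mathcal F^{\beta,\sigma}(E)}\max\{t^{\beta-\alpha_1-1},t^{\beta-1}\}$, passes to the limit to identify $AX$ with an explicitly continuous function $Y$, and integrates the approximate equations with dominated convergence to get the integral identity on $[\epsilon,T]$ and then on $(0,T]$. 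Your route is shorter but outsources the hard step to a standard theorem; the paper's is self-contained, and its uniform bound on $A_nX_n$ is exactly what delivers \eqref{TVT13}. On that last point one caution: \eqref{TVT13} does \emph{not} follow by multiplying your bound on $t^{1-\alpha_1}\|A^{1-\alpha_1}X(t)\|$ by $t^{\alpha_1}$, since $t^{\alpha_1}$ is unbounded near $0$ when $\alpha_1<0$. Instead apply the bounded operator $A^{\alpha_1}$ to each term of your decomposition (which is legitimate since $\alpha_1\leq 0$), giving $\|AX(t)\|\leq \iota_1 t^{-1}\|\xi\|+C\|A^{-\alpha_1}F\|_{\mathcal F^{\beta,\sigma}(E)}t^{\beta-1}$ and hence $t\|\tfrac{dX}{dt}\|\leq C[\|\xi\|+\|A^{-\alpha_1}F\|_{\mathcal F^{\beta,\sigma}(E)}\max\{t^{\beta-\alpha_1},t^{\beta}\}]$; this is a one-line repair, not a flaw in the method.
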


\begin{proof}
The proof is divided into four steps.

{\bf Step 1.} 
 Let us show that \eqref{TVT11} possesses a unique mild solution in the space $    \mathcal C((0,T];\mathcal D(A^{1-\alpha_1})).$

 We have
 \begin{align*}
&\int_0^tA^{1-\alpha_1} S(t-s) F(s)ds\\
& =\int_0^tAS(t-s) A^{-\alpha_1}F(s)ds \\
 &=\int_0^tAS(t-s) [A^{-\alpha_1}F(s)-A^{-\alpha_1}F(t)]ds + \int_0^tAS(t-s) dsA^{-\alpha_1}F(t)\\
 &=\int_0^tAS(t-s) [A^{-\alpha_1}F(s)-A^{-\alpha_1}F(t)]ds + [I-S(t)]A^{-\alpha_1}F(t).
 \end{align*}

The integral in the right-hand side of the latter equality is well-defined and continuous on $(0,T].$ This is because by
\eqref{TVT6},  \eqref{TVT7}  and {\rm (F1)},  
\begin{align}
&\int_0^t\|AS(t-s) [A^{-\alpha_1}F(s)-A^{-\alpha_1}F(t)]\|ds  \notag\\
&\leq \int_0^t\|AS(t-s)\| \|A^{-\alpha_1}F(s)-A^{-\alpha_1}F(t)\|ds \notag\\
&\leq \iota_1\|A^{-\alpha_1}F\|_{\mathcal F^{\beta,\sigma}(E)} \int_0^t  (t-s)^{\sigma-1}s^{\beta-\sigma-1}ds \notag\\
& =\iota_1\|A^{-\alpha_1}F\|_{\mathcal F^{\beta,\sigma}(E)} B(\beta-\sigma,\sigma)t^{\beta-1} <\infty, \hspace{1cm} 0<t\leq T,  \label{TVT14}
\end{align}
where $ B(\cdot,\cdot)$ is the Beta function. 
The integral $\int_0^\cdot A^{1-\alpha_1}  S(\cdot-s) F(s)ds$ is  hence  continuous on $(0,T]$.

Since $A^{1-\alpha_1}  $ is closed, we observe that 
$$A^{1-\alpha_1}   \int_0^tS(t-s) F(s)ds=\int_0^tA^{1-\alpha_1}   S(t-s) F(s)ds.$$
Thus,  $A^{1-\alpha_1}   \int_0^\cdot S(\cdot -s) F(s)ds$ is continuous on $(0,T]$.

On the other hand, it is clear that  $A^{1-\alpha_1}  S(\cdot )\xi $ is also continuous on $(0,T]$. The function $X$ defined by 
$$X(t)=A^{\alpha_1-1}  \left[A^{1-\alpha_1}  S(t)\xi +A^{1-\alpha_1} \int_0^tS(t-s) F(s)ds\right]$$
is thus a unique mild solution of \eqref{TVT11} in 
$ \mathcal C((0,T];\mathcal D(A^{1-\alpha_1} )).$

{\bf Step 2.} Let us  verify the estimate \eqref{TVT12}.

  When $\alpha_1< 0,$  \eqref{TVT6},  \eqref{TVT8} and   \eqref{TVT9}  give  
  \begin{align*}
&\int_0^t\|S(t-s) F(s)\|ds  \\ 
& \leq \int_0^t\|A^{\alpha_1} S(t-s)\| \| A^{-\alpha_1}F(s)\|ds      \notag\\
 &\leq \int_0^t \iota_{-\alpha_1} \iota_0 \|A^{-\alpha_1}F\|_{\mathcal F^{\beta,\sigma}(E)} s^{\beta-1} ds    \notag\\
 &=\frac{\iota_{-\alpha_1} \iota_0 \|A^{-\alpha_1}F\|_{\mathcal F^{\beta,\sigma}(E)} t^\beta}{\beta}<\infty, \hspace{1cm} 0\leq t\leq T.  \notag
 \end{align*}
Meanwhile, when $\alpha_1\geq 0$,  \eqref{TVT7} and  \eqref{TVT8} give 
\begin{align*}
&\int_0^t\|S(t-s) F(s)\|ds\\
& \leq \int_0^t\|A^{\alpha_1} S(t-s)\| \| A^{-\alpha_1}F(s)\|ds \\
 &\leq \int_0^t \iota_{\alpha_1}  \|A^{-\alpha_1}F\|_{\mathcal F^{\beta,\sigma}(E)} (t-s)^{-\alpha_1} s^{\beta-1} ds\\
 &=\iota_{\alpha_1}  \|A^{-\alpha_1}F\|_{\mathcal F^{\beta,\sigma}(E)} B(\beta, 1-\alpha_1) t^{\beta-\alpha_1}<\infty, \hspace{0.5cm} 0<t\leq T.
 \end{align*}
Hence, in any case of $\alpha_1$, 
\begin{align}
& \int_0^t\|S(t-s) F(s)\|ds    \label{TVT15}\\
& \leq C \|A^{-\alpha_1}F\|_{\mathcal F^{\beta,\sigma}(E)}  \max\{t^{\beta-\alpha_1}, t^\beta\}, \hspace{1.5cm} 0<t\leq T,   \notag
 \end{align}
where $C$ is some positive constant depending only on the exponents.

Using \eqref{TVT14} and \eqref{TVT15}, we observe that 
\begin{align*} 
&\|X(t)\|+t^{1-\alpha_1}\|A^{1-\alpha_1}X(t)\|      \\
= & \Big\|S(t)\xi+ \int_0^tS(t-s) F(s)ds\Big\|        \notag\\
&+t^{1-\alpha_1}\Big\|A^{1-\alpha_1}S(t)\xi+ \int_0^tA^{1-\alpha_1}S(t-s) F(s)ds\Big\|       \notag\\
\leq & \|S(t)\xi\| + t^{1-\alpha_1}\|A^{1-\alpha_1}S(t)\xi\| +\int_0^t\|S(t-s) F(s)\|ds        \notag\\
&+t^{1-\alpha_1}\int_0^t\|AS(t-s) [A^{-\alpha_1}F(s)-A^{-\alpha_1}F(t)]\|ds         \notag\\
&+t^{1-\alpha_1}\| [I-S(t)]A^{-\alpha_1}F(t)\|       \notag\\
\leq &\|S(t)\xi\| + t^{1-\alpha_1}\|A^{1-\alpha_1}S(t)\| \|\xi\| +C \|A^{-\alpha_1}F\|_{\mathcal F^{\beta,\sigma}(E)}  \max\{t^{\beta-\alpha_1}, t^\beta\}       \notag\\
& +t^{1-\alpha_1}\| I-S(t)\| \|A^{-\alpha_1}F(t)\|       \notag\\
&+\iota_1\|A^{-\alpha_1}F\|_{\mathcal F^{\beta,\sigma}(E)} B(\beta-\sigma,\sigma)t^{\beta-\alpha_1}, \hspace{1cm} 0<t\leq T.        \notag
\end{align*}
Due to \eqref{TVT6},  \eqref{TVT7} and  \eqref{TVT9}, it is then seen that 
\begin{align*} 
&\|X(t)\|+t^{1-\alpha_1}\|A^{1-\alpha_1}X(t)\| \\
\leq &\iota_0 \|\xi\| + \iota_{1-\alpha_1}  \|\xi\| +C \|A^{-\alpha_1}F\|_{\mathcal F^{\beta,\sigma}(E)}  \max\{t^{\beta-\alpha_1}, t^\beta\}\\
& +(1+\iota_0) \|A^{-\alpha_1}F\|_{\mathcal F^{\beta,\sigma}(E)}t^{\beta-\alpha_1}\\
&+\iota_1\|A^{-\alpha_1}F\|_{\mathcal F^{\beta,\sigma}(E)} B(\beta-\sigma,\sigma)t^{\beta-\alpha_1}, \hspace{1cm} 0<t\leq T.
\end{align*}
 Thus, \eqref{TVT12} has been verified.

{\bf Step 3.} 
Let us show that  if $\alpha_1\leq 0,$ then
\begin{itemize}
\item 
$ X\in  \mathcal C([0,T];E) \cap \mathcal C^1((0,T];E).$
\item $X$ is a strict solution of \eqref{TVT11}.
\end{itemize}

In view of \eqref{TVT15}, $\int_0^\cdot S(\cdot-s) F(s)ds$  is continuous on $[0,T]$. Since
$$X(t)=S(t)\xi +\int_0^tS(t-s) F(s)ds,$$ 
we obtain that
\begin{equation} \label{TVT16}
 X\in  \mathcal C([0,T];E).
\end{equation}

 Let $A_n=A(1+\frac{A}{n})^{-1} (n=1,2,3,\dots)$  be the Yosida approximation of $A$. Then, $A_n$ satisfies {\rm (A)}  uniformly and generates an analytic semigroup $S_n(t)$ (see e.g.,  \cite{yagi}). Furthermore, for any $0\leq \nu <\infty$ and $0<t\leq T,$   
\begin{equation} \label{TVT17}
\begin{cases}
\lim_{n\to\infty} A_n^\nu S_n(t)=A^\nu S(t) \hspace{1cm} \text{ in } L(E),  \\
\lim_{n\to\infty} A_n^{-\nu}=A^{-\nu}   \hspace{2cm} \text{ in } L(E),
\end{cases}
\end{equation}
 and   
 \begin{equation} \label{TVT18}
\begin{aligned}
\begin{cases}
\|A_n^\nu S_n(t)\| \leq \varsigma_\nu t^{-\nu} &\quad\quad\text { if   } \nu>0, 0<t\leq T,\\
\|A_n^\nu S_n(t)\| \leq \varsigma_\nu e^{-\varsigma_\nu t} &\quad\quad\text { if  } \nu=0, 0\leq t\leq T,\\
\|A_n^{-\nu}\|  \leq \varsigma_\nu,
\end{cases}
\end{aligned}
\end{equation}
where $\varsigma_\nu>0$ is some constant independent of $n$.

Consider a function $X_n$ defined by 
$$X_n(t)=S_n(t)\xi +\int_0^tA_n^{\alpha_1} S_n(t-s) A^{-\alpha_1}F(s)ds, \hspace{2cm} 0\leq t\leq T.$$
 We have
\begin{align} 
&A_nX_n(t) \notag\\
=&A_nS_n(t)\xi+\int_0^t A_n^{1+\alpha_1}S_n(t-s)[A^{-\alpha_1}F(s)-A^{-\alpha_1}F(t)]ds  \notag \\
&+ \int_0^t A_n^{1+\alpha_1}S_n(t-s)ds A^{-\alpha_1}F(t)\notag\\
=&A_nS_n(t)\xi+\int_0^t A_n^{1+\alpha_1}S_n(t-s)[A^{-\alpha_1}F(s)-A^{-\alpha_1}F(t)]ds    \label{TVT19}\\
&+ A_n^{\alpha_1}[I-S_n(t)]A^{-\alpha_1}F(t). \notag
\end{align}

Let us estimate $\|A_nX_n(t)\|$. When
 $-1\leq \alpha_1 \leq 0$,  
 \eqref{TVT6}, \eqref{TVT18} and  \eqref{TVT19} give 
\begin{align*} 
&\|A_nX_n(t)\|\notag\\
\leq &\varsigma_1 t^{-1} \|\xi\|+\varsigma_{1+\alpha_1}  \|A^{-\alpha_1}F\|_{\mathcal F^{\beta,\sigma}(E)}\int_0^t   (t-s)^{\sigma-\alpha_1-1} s^{\beta-\sigma-1} ds  \notag\\
&+\varsigma_{-\alpha_1} (1+\varsigma_0 e^{-\varsigma_0 t}) \|A^{-\alpha_1}F\|_{\mathcal F^{\beta,\sigma}(E)} t^{\beta-1}\notag\\
=&\varsigma_1 \|\xi\| t^{-1} +  \varsigma_{1+\alpha_1}  \|A^{-\alpha_1}F\|_{\mathcal F^{\beta,\sigma}(E)} B(\beta-\sigma, \sigma-\alpha_1)  t^{\beta-\alpha_1-1}\notag\\
&+\varsigma_{-\alpha_1} (1+\varsigma_0 e^{-\varsigma_0 t}) \|A^{-\alpha_1}F\|_{\mathcal F^{\beta,\sigma}(E)} t^{\beta-1}, \quad \quad 0<t\leq T.
\end{align*}
In the meantime, when $\alpha_1 <-1$, 
\begin{align*} 
&\|A_nX_n(t)\|\notag\\
\leq &\varsigma_1 t^{-1} \|\xi\|+\|A_n^{\alpha_1}\|\int_0^t \|A_n S_n(t-s)\|\|A^{-\alpha_1}F(s)-A^{-\alpha_1}F(t)\|ds \notag\\
&+\varsigma_{-\alpha_1} (1+\varsigma_0 e^{-\varsigma_0 t}) \|A^{-\alpha_1}F\|_{\mathcal F^{\beta,\sigma}(E)} t^{\beta-1}\notag\\
\leq &\varsigma_1 t^{-1} \|\xi\|+\varsigma_{-\alpha_1} \varsigma_1  \|A^{-\alpha_1}F\|_{\mathcal F^{\beta,\sigma}(E)}\int_0^t   (t-s)^{\sigma-1} s^{\beta-\sigma-1} ds  \notag\\
&+\varsigma_{-\alpha_1} (1+\varsigma_0 e^{-\varsigma_0 t}) \|A^{-\alpha_1}F\|_{\mathcal F^{\beta,\sigma}(E)} t^{\beta-1}\notag\\
=&\varsigma_1 \|\xi\| t^{-1} + \varsigma_{-\alpha_1} \varsigma_1   \|A^{-\alpha_1}F\|_{\mathcal F^{\beta,\sigma}(E)} B(\beta-\sigma, \sigma)  t^{\beta-1}\notag\\
&+\varsigma_{-\alpha_1} (1+\varsigma_0 e^{-\varsigma_0 t}) \|A^{-\alpha_1}F\|_{\mathcal F^{\beta,\sigma}(E)} t^{\beta-1}, \hspace{1cm}  0<t\leq T. 
\end{align*}
Therefore, in any case of $\alpha_1$,  there exists $C_1>0$  independent of $n$ such that
\begin{align} 
\|A_nX_n(t)\| \leq & C_1 \|\xi\| t^{-1} + C_1 \|A^{-\alpha_1}F\|_{\mathcal F^{\beta,\sigma}(E)}   \label{TVT20} \\
& \times  \max\{t^{\beta-\alpha_1-1}, t^{\beta-1}\}, \hspace{1cm} 0<t\leq T.   \notag
\end{align}

Thus,  \eqref{TVT17} gives  
$$
\lim_{n\to\infty}A_nX_n(t)=Y(t),
$$
where
\begin{align*}
Y(t)=&AS(t)\xi+\int_0^t A^{1+\alpha_1}S(t-s)[A^{-\alpha_1}F(s)-A^{-\alpha_1}F(t)]ds\\
& +A^{\alpha_1} [I-S(t)]A^{-\alpha_1}F(t).
\end{align*}

Let us verify  that $Y$ is continuous on $(0,T]$. Take $0<t_0\leq T$. By using \eqref{TVT6} and \eqref{TVT7},  for every $t\geq t_0$,
\begin{align*}
&\|Y(t)-Y(t_0)\|\notag\\
=& \Big\| [AS(t)\xi-AS(t_0)\xi] \\
& +\{A^{\alpha_1} [I-S(t)]A^{-\alpha_1}F(t) - A^{\alpha_1} [I-S(t_0)]A^{-\alpha_1}F(t_0)\} \\
&+ \int_{t_0}^t A^{1+\alpha_1}S(t-s)[A^{-\alpha_1}F(s)-A^{-\alpha_1}F(t)]ds\\
&+\int_0^{t_0} A^{1+\alpha_1}S(t-s)[A^{-\alpha_1}F(s)-A^{-\alpha_1}F(t)]ds\\
&-\int_0^{t_0} A^{1+\alpha_1}S(t_0-s)[A^{-\alpha_1}F(s)-A^{-\alpha_1}F(t_0)]ds    \Big\|\\
\leq & \|AS(t_0)[S(t-t_0)-I]\xi\| \notag\\
&+ \|A^{\alpha_1}[I-S(t)]A^{-\alpha_1}F(t)-A^{\alpha_1}[I-S(t_0)]A^{-\alpha_1}F(t_0)\|\notag \\
&+\Big\| \int_{t_0}^t A^{1+\alpha_1}S(t-s)[A^{-\alpha_1}F(s)-A^{-\alpha_1}F(t)]ds\notag\\
&\hspace{0.7cm} +\int_0^{t_0} A^{1+\alpha_1}S(t-s)ds[A^{-\alpha_1}F(t_0)-A^{-\alpha_1}F(t)]\notag\\
&\hspace{0.7cm}+\int_0^{t_0} S(t-t_0)A^{1+\alpha_1}S(t_0-s)[A^{-\alpha_1}F(s)-A^{-\alpha_1}F(t_0)]ds\notag\\
&\hspace{0.7cm}-\int_0^{t_0} A^{1+\alpha_1}S(t_0-s)[A^{-\alpha_1}F(s)-A^{-\alpha_1}F(t_0)]ds\Big\|\notag\\
\leq & \iota_1 t_0^{-1}\|S(t-t_0)\xi-\xi\|\\
& + \|A^{\alpha_1}[I-S(t)]A^{-\alpha_1}F(t)-A^{\alpha_1}[I-S(t_0)]A^{-\alpha_1}F(t_0)\|\notag \\
&+ \int_{t_0}^t \|A^{1+\alpha_1}S(t-s)\| \|A^{-\alpha_1}F(s)-A^{-\alpha_1}F(t)\|ds\notag\\
&+\|A^{\alpha_1}[S(t-t_0)-S(t)][A^{-\alpha_1}F(t_0)-A^{-\alpha_1}F(t)]\|\notag\\
&+\int_0^{t_0} \|[S(t-t_0)-I]A^{1+\alpha_1}S(t_0-s)[A^{-\alpha_1}F(s)-A^{-\alpha_1}F(t_0)]\|ds\notag\\
\leq & \iota_1 t_0^{-1}\|S(t-t_0)\xi-\xi\| \\
&+ \|A^{\alpha_1}[I-S(t)]A^{-\alpha_1}F(t)-A^{\alpha_1}[I-S(t_0)]A^{-\alpha_1}F(t_0)\|\notag \\
&+ \|A^{-\alpha_1}F\|_{\mathcal F^{\beta,\sigma} }\int_{t_0}^t \|A^{1+\alpha_1}S(t-s)\| (t-s)^\sigma s^{\beta-\sigma-1}ds\notag\\
&+\|A^{\alpha_1}[S(t-t_0)-S(t)][A^{-\alpha_1}F(t_0)-A^{-\alpha_1}F(t)]\|\notag\\
&+\|A^{-\alpha_1}F\|_{\mathcal F^{\beta,\sigma} }\|S(t-t_0)-I\|  \int_0^{t_0} \|A^{1+\alpha_1}S(t_0-s)\| (t_0-s)^\sigma s^{\beta-\sigma-1}ds.\notag
\end{align*}
Therefore,
\begin{align*}
&\limsup_{t\searrow  t_0}\|Y(t)-Y(t_0)\|\notag\\
&\leq \limsup_{t\searrow  t_0} \Big[\|A^{-\alpha_1}F\|_{\mathcal F^{\beta,\sigma} }\int_{t_0}^t \|A^{1+\alpha_1}S(t-s)\| (t-s)^\sigma s^{\beta-\sigma-1}ds  \\
&+ \|A^{-\alpha_1}F\|_{\mathcal F^{\beta,\sigma} }\|S(t-t_0)-I\|  \int_0^{t_0} \|A^{1+\alpha_1}S(t_0-s)\| (t_0-s)^\sigma s^{\beta-\sigma-1}ds\Big].
\end{align*}
Thus, it is easily seen that
$$\lim_{t\searrow  t_0}Y(t)=Y(t_0).$$
 Similarly, we obtain that
 $$\lim_{t\nearrow  t_0}Y(t)=Y(t_0).$$ 
The function  $Y$ is hence continuous at $t=t_0$ and then at every point in $(0,T]$.

On the other hand, due to \eqref{TVT17} and \eqref{TVT18}, 
$$\lim_{n\to \infty} X_n(t)=X(t) \hspace{1cm} \text{ in } E \text{  pointwise.}$$
We thus arrive at 
$$X(t)=\lim_{n\to\infty} X_n(t)=\lim_{n\to\infty} A_n^{-1} A_n X_n(t)=A^{-1}Y(t).$$
As a consequence,  
  $$X(t) \in \mathcal D(A),  \hspace{2cm} 0<t\leq T,$$
 and 
$$AX=Y\in \mathcal C((0,T];E).$$

Meanwhile, since $A_n^{\alpha_1}$ is bounded, by some direct calculations,
$$\frac{dX_n}{dt}=-A_n X_n+A_n^{\alpha_1}A^{-\alpha_1}F(t), \hspace{2cm} 0<t\leq T.$$
From this equation, for any $0<\epsilon \leq T,$  
\begin{equation}\label{TVT21}
X_n(t)=X_n(\epsilon)+\int_\epsilon^t[A_n^{\alpha_1}A^{-\alpha_1}F(s)-A_nX_n(s)]ds, \quad\quad \epsilon\leq t\leq T.
\end{equation}

Using  \eqref{TVT20},   the Lebesgue dominate convergence theorem  applied to \eqref{TVT21} provides that 
\begin{equation} \label{TVT22}
X(t)=X(\epsilon)+\int_\epsilon^t[F(s)-AX(s)]ds, \hspace{1cm} \epsilon\leq t \leq T.
\end{equation}
This shows that $X$ is differentiable  on $ [\epsilon,T].$ Since $\epsilon$ is arbitrary in $(0,T]$, we conclude that 
\begin{equation}  \label{TVT23}
X \in \mathcal C^1((0,T];E).
\end{equation}

By combining \eqref{TVT16} and \eqref{TVT23}, the first statement has been verified:
$$X \in \mathcal C([0,T];E) \cap \mathcal C^1((0,T];E).
$$

On the other hand, taking $\epsilon \to 0$ in \eqref{TVT22}, we have
$$X(t)=\xi+\int_0^t[F(s)-AX(s)]ds, \hspace{1cm} 0< t \leq T.$$
Since
\begin{align*}
\int_0^t\|F(s)\|ds \leq & \int_0^t\|A^{\alpha_1}\|\|A^{-\alpha_1}F(s)\| ds\\
&  \leq \|A^{\alpha_1}\|\|A^{-\alpha_1}F\|_{\mathcal F^{\beta,\sigma}(E)} \int_0^t s^{\beta-1} ds<\infty, 
\end{align*}
the integral $\int_0^t F(s)ds$ is well-defined. The latter equality then shows that $\int_0^t AX(s)ds$ is well-defined and that
$$X(t)=\xi-\int_0^tAX(s)ds+ \int_0^t F(s)ds, \hspace{1cm} 0< t \leq T.$$
Therefore,  $X$ is a strict solution of \eqref{TVT11}. We thus arrive at the second statement.

{\bf Step 4.} 
Let us prove  that $X$ satisfies the estimate \eqref{TVT13} when $\alpha_1\leq 0$.

Thanks to \eqref{TVT20}, 
\begin{align*}
&\|AX(t)\|=\|Y(t)\|=\lim_{n\to\infty} \|A_n X_n(t)\|\\
&\leq C_1 \|\xi\| t^{-1} + C_1 \|A^{-\alpha_1}F\|_{\mathcal F^{\beta,\sigma}(E)}  \max\{t^{\beta-\alpha_1-1}, t^{\beta-1}\}, \hspace{1cm} 0<t\leq T.
\end{align*}
Therefore, 
\begin{align*}
&t\|AX(t)\|\leq C_1[ \|\xi\|  +  \|A^{-\alpha_1}F\|_{\mathcal F^{\beta,\sigma}(E)} \max\{t^{\beta-\alpha_1}, t^\beta\}], \hspace{1cm} 0<t\leq T.
\end{align*}

This together with    \eqref{TVT22} gives  
\begin{align*}
\Big\|\frac{dX}{dt}\Big\|=&\|F(t)-AX(t)\|\\
 \leq & \|A^{\alpha_1}\| \|A^{-\alpha_1}F(t)\| +\|AX(t)\|\\
\leq  & \iota_{-\alpha_1}  \|A^{-\alpha_1}F\|_{\mathcal F^{\beta,\sigma}(E)} t^{\beta-1}+C_1 \|\xi\| t^{-1} \\
&+ C_1 \|A^{-\alpha_1}F\|_{\mathcal F^{\beta,\sigma}(E)}  \max\{t^{\beta-\alpha_1-1},t^{\beta-1}\}, \hspace{1cm} 0<t\leq T.
\end{align*}
Hence,   there exists $C>0$ depending only on the exponents and the constants in \eqref{TVT7}, \eqref{TVT8} and \eqref{TVT9} such that 
\begin{align*}
&t\Big\|\frac{dX}{dt}\Big\| \leq C \|\xi\|  + C \|A^{-\alpha_1}F\|_{\mathcal F^{\beta,\sigma}(E)} \max\{t^{\beta-\alpha_1}, t^\beta\}, \hspace{1cm} 0<t\leq T.
\end{align*}

By Steps 1-4, the proof  is  complete.
\end{proof}

Let us now consider the case where the initial value $\xi$ belongs to a subspace of $E$, namely $\mathcal D(A^{\beta-\alpha_1})$.
The below theorem shows   maximal regularity for both initial value $\xi$ and function $F$.

\begin{theorem}\label{T2}  
Let {\rm (A)} and {\rm (F1)} be satisfied.
 Let  $\xi\in \mathcal D(A^{\beta-\alpha_1}).$  Then, there exists a unique  mild solution of \eqref{TVT11} possessing the regularity:
$$X\in \mathcal C((0,T];\mathcal D(A^{1-\alpha_1})) \cap \mathcal C([0,T];\mathcal D(A^{\beta-\alpha_1})),$$
and 
$$A^{1-\alpha_1} X\in \mathcal F^{\beta,\sigma}((0,T];E). $$
  In addition, 
$X$ satisfies the estimate:
\begin{align}
\|A^{\beta-\alpha_1}X\|_{\mathcal C} +\|A^{1-\alpha_1} X\|_{\mathcal F^{\beta,\sigma}(E)} \leq & C[\|A^{\beta-\alpha_1} \xi\|       +\|A^{-\alpha_1} F\|_{\mathcal F^{\beta,\sigma}(E)}]. \label{TVT24} 
\end{align}
Furthermore,
when $  \alpha_1 \leq 0,$   $X$ becomes a strict solution of \eqref{TVT11} possessing the regularity: 
$$X\in \mathcal C^1((0,T];E)$$ 
 and 
$$A^{-\alpha_1}\frac{dX}{dt}\in \mathcal F^{\beta,\sigma}((0,T];E)  $$
with  the estimate:  
\begin{equation} \label{TVT25}
\Big\|A^{-\alpha_1}\frac{dX}{dt}\Big\|_{\mathcal F^{\beta,\sigma}(E)} \leq C [\|A^{\beta-\alpha_1} \xi\| + \|A^{-\alpha_1} F\|_{\mathcal F^{\beta,\sigma}(E)}]. 
\end{equation}
Here,   $C$ is some positive constant depending only on the exponents.
\end{theorem}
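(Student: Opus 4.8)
The plan is to build on Theorem \ref{T1}, which already produces the unique mild solution $X(t)=S(t)\xi+\int_0^t S(t-s)F(s)\,ds$ together with $X\in\mathcal C((0,T];\mathcal D(A^{1-\alpha_1}))$, and to show that the stronger hypothesis $\xi\in\mathcal D(A^{\beta-\alpha_1})$ upgrades the regularity. Writing $f:=A^{-\alpha_1}F\in\mathcal F^{\beta,\sigma}((0,T];E)$ and $\eta:=A^{\beta-\alpha_1}\xi\in E$, I would reduce the whole theorem to the single maximal-regularity assertion
$$V:=A^{1-\alpha_1}X=A^{1-\beta}S(\cdot)\eta+\int_0^\cdot AS(\cdot-s)f(s)\,ds\in\mathcal F^{\beta,\sigma}((0,T];E),\qquad \|V\|_{\mathcal F^{\beta,\sigma}(E)}\le C\big[\|\eta\|+\|f\|_{\mathcal F^{\beta,\sigma}(E)}\big].$$
Once this is in hand the derivative bound \eqref{TVT25} is essentially free: for $\alpha_1\le0$ the differentiation relation $\tfrac{dX}{dt}=F-AX$ from Theorem \ref{T1} gives $A^{-\alpha_1}\tfrac{dX}{dt}=f-V$, so $\|A^{-\alpha_1}\tfrac{dX}{dt}\|_{\mathcal F^{\beta,\sigma}(E)}\le\|f\|_{\mathcal F^{\beta,\sigma}(E)}+\|V\|_{\mathcal F^{\beta,\sigma}(E)}$, and the estimate closes via \eqref{TVT24}. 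Thus I would dispatch the two easy ingredients and then concentrate all effort on the convolution.

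For the homogeneous part $V_1(t):=A^{1-\beta}S(t)\eta$ I would check the three defining properties of $\mathcal F^{\beta,\sigma}$ directly. Property \eqref{TVT3} holds since $t^{1-\beta}\|V_1(t)\|\le\iota_{1-\beta}\|\eta\|$ and in fact $t^{1-\beta}V_1(t)\to0$ by \eqref{TVT10}. For the weighted Hölder seminorm I split into the regimes $t\le 2s$ and $t>2s$: in the first I write $V_1(t)-V_1(s)=-\int_s^t A^{2-\beta}S(\tau)\eta\,d\tau$, bound $\|A^{2-\beta}S(\tau)\|\le\iota_{2-\beta}\tau^{\beta-2}$ and use $t-s\le s$ to absorb the excess power; in the second I use the triangle inequality with $t^{\beta-1}\le s^{\beta-1}$ and $s\le t-s$. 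Refining the crude $\tau^{\beta-2}$ bound by factoring $A^{2-\beta}S(\tau)=AS(\tfrac{\tau}{2})A^{1-\beta}S(\tfrac{\tau}{2})$ and invoking \eqref{TVT10} moreover forces the modulus to vanish as $t\to0$, giving \eqref{TVT5}. This yields $\|V_1\|_{\mathcal F^{\beta,\sigma}(E)}\le C\|\eta\|$.

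The inhomogeneous term $V_2(t):=\int_0^t AS(t-s)f(s)\,ds$ is the technical heart and the step I expect to be the main obstacle. I would start from the subtraction identity already exploited in \eqref{TVT14}, namely $V_2(t)=\int_0^t AS(t-s)[f(s)-f(t)]\,ds+[I-S(t)]f(t)$, which together with \eqref{TVT6}, \eqref{TVT7} and the Beta-function identity gives the pointwise bound $t^{1-\beta}\|V_2(t)\|\le C\|f\|_{\mathcal F^{\beta,\sigma}(E)}$ and, after a little more care, the limit in \eqref{TVT3}. The real work is the weighted Hölder seminorm: for $0<s<t$ I split $\int_0^t=\int_0^s+\int_s^t$, so that $V_2(t)-V_2(s)=\int_s^t AS(t-r)f(r)\,dr+\int_0^s[AS(t-r)-AS(s-r)]f(r)\,dr$; in each piece I subtract the appropriate value of $f$ (namely $f(t)$, resp. $f(s)$), use $\int_s^t AS(t-r)\,dr=I-S(t-s)$ and $AS(t-r)-AS(s-r)=-\int_{s-r}^{t-r}A^2S(\tau)\,d\tau$, and estimate through \eqref{TVT6}, \eqref{TVT7} and the Beta integral, again separating the regimes $t\le 2s$ and $t>2s$. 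This is precisely the classical parabolic maximal-regularity computation in weighted Hölder spaces, and the bookkeeping of the several boundary terms is where all the care is required; the vanishing condition \eqref{TVT5} for $V_2$ is inherited from $w_f(t)\to0$. The outcome $\|V_2\|_{\mathcal F^{\beta,\sigma}(E)}\le C\|f\|_{\mathcal F^{\beta,\sigma}(E)}$, added to the bound on $V_1$, establishes $A^{1-\alpha_1}X\in\mathcal F^{\beta,\sigma}$ and the $\mathcal F$-part of \eqref{TVT24}.

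It remains to record the continuity $X\in\mathcal C([0,T];\mathcal D(A^{\beta-\alpha_1}))$ and the $\mathcal C$-part of \eqref{TVT24}. I would write $A^{\beta-\alpha_1}X(t)=S(t)A^{\beta-\alpha_1}\xi+\int_0^t A^{\beta}S(t-s)f(s)\,ds$; the first term tends to $A^{\beta-\alpha_1}\xi$ by strong continuity of $S$, while for the convolution the rescaling $s=t\rho$ makes the integrand dominated by an integrable function of $\rho$ independent of $t$ (by \eqref{TVT6} and \eqref{TVT7}) and pointwise convergent to $0$ by \eqref{TVT10}, so dominated convergence forces the convolution to $0$ as $t\to0$; continuity on $(0,T]$ is as in Theorem \ref{T1}. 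The uniform bound on $\|A^{\beta-\alpha_1}X\|_{\mathcal C}$ comes from the same estimates, completing \eqref{TVT24}. Finally, for $\alpha_1\le0$ the strict-solution regularity $X\in\mathcal C^1((0,T];E)$ is inherited from Theorem \ref{T1}, and \eqref{TVT25} follows at once from the identity $A^{-\alpha_1}\tfrac{dX}{dt}=f-V$ noted in the first paragraph.
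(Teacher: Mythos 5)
Your proposal follows essentially the same route as the paper: Theorem \ref{T1} supplies the mild solution and the $\mathcal C((0,T];\mathcal D(A^{1-\alpha_1}))$ regularity, the theorem is reduced to the maximal-regularity statement for $V=A^{1-\alpha_1}X$, the convolution is attacked through the subtraction identity $V_2(t)=\int_0^tAS(t-s)[f(s)-f(t)]\,ds+[I-S(t)]f(t)$ together with the identities $\int_s^tAS(t-r)\,dr=I-S(t-s)$ and $AS(t-r)-AS(s-r)=-\int_{s-r}^{t-r}A^2S(\tau)\,d\tau$, and \eqref{TVT25} is read off from $A^{-\alpha_1}\frac{dX}{dt}=f-V$; the differences from the paper (your two-regime split $t\le 2s$ versus $t>2s$ in place of the paper's $\|A^{-\sigma}[I-S(h)]\|$ smoothing estimates, and your dominated-convergence argument for continuity of $A^{\beta-\alpha_1}X$ at $t=0$ in place of the paper's use of $w_f(t)\to0$) are organizational. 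The one place where your plan, taken literally, would break down is the boundary term $[I-S(t-s)]f(t)$ produced by subtracting $f(t)$ in the near-diagonal piece $\int_s^tAS(t-r)f(r)\,dr$: estimated in isolation, it cannot be bounded by $C(t-s)^\sigma s^{\beta-\sigma-1}\|f\|_{\mathcal F^{\beta,\sigma}(E)}$ when $t$ is close to $s$, because $\|[I-S(h)]x\|=O(h^\sigma)$ requires $x\in\mathcal D(A^\sigma)$ and $f(t)$ carries no such spatial smoothness (moreover the vanishing condition \eqref{TVT5} would fail unless $\lim_{t\to 0}t^{1-\beta}f(t)=0$). The cure is to combine it with the boundary term $[S(t-s)-I][I-S(s)]f(s)$ from the far piece; their sum equals $[I-S(t-s)][f(t)-f(s)]+[I-S(t-s)]S(s)f(s)$, where the first summand is controlled by the H\"older modulus $w_f$ and the second by the smoothing estimates $\|A^{-\sigma}[I-S(t-s)]\|\le\frac{\iota_{1-\sigma}}{\sigma}(t-s)^\sigma$ and $\|A^\sigma S(s)\|\le\iota_\sigma s^{-\sigma}$ --- which is exactly what the paper's separate bookkeeping of $J_3(t)=[I-S(t)]f(t)$ accomplishes. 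With that regrouping made explicit, the rest of your outline goes through and reproduces the paper's estimates.
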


\begin{proof}
The proof is divided into several steps.

{\bf Step 1}. Let us verify that 
$$X\in \mathcal C((0,T];\mathcal D(A^{1-\alpha_1})) \cap \mathcal C([0,T];\mathcal D(A^{\beta-\alpha_1})).$$

 In Theorem \ref{T1}, we have already shown that 
$$X\in \mathcal C((0,T];\mathcal D(A^{1-\alpha_1})).$$

We now have
\begin{align*}
A^{\beta-\alpha_1} \int_0^t S(t-s) F(s)ds
= &A^\beta  \int_0^t S(t-s)[A^{-\alpha_1}F(s)-A^{-\alpha_1}F(t)] ds \\
&+ A^{\beta-1} \int_0^t AS(t-s) A^{-\alpha_1}F(t)ds\\
= & \int_0^t A^\beta  S(t-s)[A^{-\alpha_1}F(s)-A^{-\alpha_1}F(t)] ds \\
&+ A^{\beta-1} [I-S(t)] A^{-\alpha_1}F(t).
\end{align*}
Hence,  \eqref{TVT6}, \eqref{TVT7} and {\rm (F1)} give
\begin{align*}
&\|A^{\beta-\alpha_1} \int_0^t S(t-s) F(s)ds\|   \notag\\
 \leq &\int_0^t \|A^\beta  S(t-s)\| \|A^{-\alpha_1}F(s)-A^{-\alpha_1}F(t)\| ds   \notag\\
&+ \|A^{\beta-1} [I-S(t)] A^{-\alpha_1}F(t)\|    \notag\\
 \leq &  \iota_\beta w_{A^{-\alpha_1}F}(t) \int_0^t (t-s)^{\sigma-\beta} s^{\beta-\sigma-1} ds  \notag\\
&+ \|A^{\beta-1} [I-S(t)] A^{-\alpha_1}F(t)\|    \notag\\
= &  \iota_\beta B(\beta-\sigma,1+\sigma-\beta) w_{A^{-\alpha_1}F}(t)   \\  
&+ \|A^{\beta-1} [I-S(t)] A^{-\alpha_1}F(t)\|.     \notag
\end{align*}
In view of   \eqref{TVT5} and \eqref{TVT10}, it follows that
$$\lim_{t\to \infty} \|A^{\beta-\alpha_1} \int_0^t S(t-s) F(s)ds\|=0.$$
The function $A^{\beta-\alpha_1} \int_0^\cdot S(\cdot-s) F(s)ds$ is therefore  continuous at $t=0$.

Since 
$A^{1-\alpha_1} X$ is continuous on $ (0,T]$,  
$A^{\beta-\alpha_1}\int_0^\cdot S(\cdot-s) F(s)ds $ is then continuous on $[0,T]$. 
Thus, from the expression 
$$A^{\beta-\alpha_1} X(\cdot)= S(\cdot) A^{\beta-\alpha_1}  \xi +\int_0^\cdot A^{\beta-\alpha_1}S(\cdot-s) F(s)ds ,$$
we observe that 
$$X\in  \mathcal C([0,T];\mathcal D(A^{\beta-\alpha_1})).$$

In addition, thanks to   \eqref{TVT6}, \eqref{TVT7}, and  \eqref{TVT9}, 
\begin{align}
&\|A^{\beta-\alpha_1}X(t)\|   \notag\\
=&\Big\|S(t) A^{\beta-\alpha_1} \xi+\int_0^t A^{\beta-\alpha_1} S(t-s)F(s)ds\Big\|    \notag \\
\leq  &\iota_0 \|A^{\beta-\alpha_1}\xi\| +\int_0^t \|A^\beta S(t-s)\|\|A^{-\alpha_1}F(s)\|ds    \notag\\
\leq  &\iota_0 \|A^{\beta-\alpha_1}\xi\| + \iota_\beta \|A^{-\alpha_1}F\|_{\mathcal F^{\beta,\sigma}(E)}  \int_0^t (t-s)^{\sigma- \beta }   s^{\beta-\sigma-1}ds    \notag\\
=  &\iota_0 \|A^{\beta-\alpha_1}\xi\| + \iota_\beta B(\beta-\sigma,1+\sigma-\beta)          
\|A^{-\alpha_1}F\|_{\mathcal F^{\beta,\sigma}(E)}, \hspace{0.3cm} 0\leq t \leq T.    \label{TVT26}
\end{align}

{\bf Step 2}. Let us now prove that 
$$A^{1-\alpha_1} X\in \mathcal F^{\beta,\sigma}((0,T];E)$$
and that \eqref{TVT24} holds true.

We  use a decomposition:
\begin{align*}
A^{1-\alpha_1} X(t)
=& A^{1-\alpha_1}S(t) \xi+\int_0^t A S(t-s)[A^{-\alpha_1}F(s)-A^{-\alpha_1}F(t)]ds\\
&+\int_0^t AS(t-s)dsA^{-\alpha_1}F(t)\\
=& A^{1-\alpha_1}S(t) \xi+\int_0^t A^1 S(t-s)[A^{-\alpha_1}F(s)-A^{-\alpha_1}F(t)]ds\\
&+[I-S(t)]A^{-\alpha_1} F(t)\\
=&J_1(t)+J_2(t)+J_3(t).
\end{align*}
Let us show that $J_1, J_2$ and $ J_3$ belong to  $\mathcal F^{\beta,\sigma}((0,T];E)$.

{\it Proof for $J_1$}. Using \eqref{TVT10}  and the expression:  
$$t^{1-\beta} A^{1-\alpha_1}S(t) \xi=t^{1-\beta} A^{1-\beta}S(t) A^{\beta-\alpha_1} \xi,$$
it is easily seen that  
$$\lim_{t\to 0} t^{1-\beta} J_1(t)=0.$$
The condition \eqref{TVT3} is hence fulfilled.

In addition,   \eqref{TVT7} and  \eqref {TVT8} give 
\begin{align}
\sup_{0\leq t \leq T} t^{1-\beta}\|J_1(t)\| 
\leq &  \sup_{t\in[0,T]} t^{1-\beta } \|A^{1-\beta }S(t)\| \| A^{\beta-\alpha_1} \xi\| \notag\\
\leq &  \iota_{1-\beta } \|A^{\beta-\alpha_1} \xi\|.   \label{TVT27}
\end{align}

On the other hand,   for $0<s<t\leq T$,
\begin{align*}
&\frac{s^{1-\beta+\sigma} \|J_1(t)-J_1(s)\|}{(t-s)^\sigma} \notag\\
&=\frac{s^{1-\beta+\sigma} \|A^{1-\alpha_1}[S(t)-S(s)] \xi\|}{(t-s)^\sigma} \notag\\
&\leq \frac{\|A^{-\sigma}[S(t-s)-I]\|}{(t-s)^\sigma}  s^{1-\beta+\sigma}\|A^{1-\beta+\sigma}S(s) A^{\beta-\alpha_1} \xi\|\notag\\
&\leq  \frac{\|\int_0^{t-s} A^{1-\sigma}S(u)du\|}{(t-s)^\sigma} f(s),  
\end{align*}
where 
$$f(s)=s^{1-\beta+\sigma}\|A^{1-\beta+\sigma}S(s) A^{\beta-\alpha_1} \xi\|.$$
Therefore,  \eqref{TVT7} gives 
\begin{align}
\frac{s^{1-\beta+\sigma} \|J_1(t)-J_1(s)\|}{(t-s)^\sigma}
&\leq  \frac{ \int_0^{t-s} \iota_{1-\sigma} u^{\sigma-1}du}{(t-s)^\sigma}f(s)\notag\\
&=  \frac{\iota_{1-\sigma} }{\sigma}f(s)    \notag \\
& \leq  \frac{\iota_{1-\sigma} \iota_{1-\beta+\sigma} \|A^{\beta-\alpha_1} \xi\|}{\sigma}, \hspace{1cm} 0\leq s< t\leq T.    \label{TVT28}
\end{align}
Note  that $f(\cdot)$ is continuous on $[0,T]$ and  
$$\lim_{t\to 0} \sup_{0\leq s \leq t} f(s)=0 \hspace{1cm} \text{  (see }  \eqref{TVT10}).$$
 Thus,
\begin{equation*} 
\sup_{0\leq s<t\leq T}\frac{s^{1-\beta+\sigma} \|J_1(t)-J_1(s)\|}{(t-s)^\sigma}<\infty
\end{equation*}
and 
\begin{align*}
&\lim_{t\to 0} \sup_{0<s<t}\frac{s^{1-\beta+\sigma} \|J_1(t)-J_1(s)\|}{(t-s)^\sigma}=0.
\end{align*}
The conditions \eqref{TVT4} and  \eqref{TVT5} are  then  satisfied. 

We hence conclude that 
$$J_1\in \mathcal F^{\beta,\sigma}((0,T];E).$$ 
Furthermore, thanks to \eqref{TVT27} and \eqref{TVT28},
$$\|J_1\|_{\mathcal F^{\beta,\sigma}(E)} \leq (\iota_{1-\beta }+\frac{\iota_{1-\sigma} \iota_{1-\beta+\sigma} }{\sigma})  \|A^{\beta-\alpha_1} \xi\|.$$

{\it Proof for $J_2$}. The norm of $J_2$ is evaluated by using \eqref{TVT6} and {\rm (F1)}:  
\begin{align*}
\|J_2(t)\|&\leq \int_0^t \|A S(t-s)\|\|A^{-\alpha_1}F(s)-A^{-\alpha_1}F(t)\|ds\notag\\
&\leq \iota_1 w_{A^{-\alpha_1}F}(t)\int_0^t (t-s)^{\sigma-1} s^{\beta-\sigma-1}ds\notag\\
&=\iota_1  B(\beta-\sigma,\sigma) t^{\beta-1 }  w_{A^{-\alpha_1}F}(t), \hspace{1cm} 0<t\leq T. 
\end{align*}
Therefore, 
\begin{align} 
& t^{1-\beta }\|J_2(t)\|  \label{TVT29} \\
&\leq \iota_1  B(\beta-\sigma,\sigma) w_{A^{-\alpha_1}F}(t)     \notag\\
& \leq \iota_1  B(\beta-\sigma,\sigma)\|A^{-\alpha_1}F\|_{\mathcal F^{\beta,\sigma}(E)}, \hspace{0.5cm} 0<t\leq T,   \notag
\end{align} 
and 
\begin{align} 
\lim_{t\to 0} t^{1-\beta } J_2(t)=0.   \label{TVT30}
\end{align}

We now observe that for $0<s<t\leq T,$
\begin{align*}
J_2(t)-J_2(s)=&\int_s^t A S(t-u)[A^{-\alpha_1} F(u)-A^{-\alpha_1} F(t)] du\\
&+ [S(t-s) -I] \int_0^s A S(s-u)[A^{-\alpha_1} F(u)-A^{-\alpha_1} F(s)] du \\
&+\int_0^s A S(t-u)[A^{-\alpha_1} F(s)-A^{-\alpha_1} F(t)] du\\
=&J_{21}(t,s)+J_{22}(t,s)+J_{23}(t,s).
\end{align*}

The norm of  $J_{21}(t,s)$ is estimated  by using  \eqref{TVT6} and \eqref{TVT7}:
\begin{align}
\|J_{21}(t,s)\|\leq & \int_s^t \|A S(t-u)\| \|A^{-\alpha_1} F(u)-A^{-\alpha_1} F(t)\| du   \notag\\
\leq &\int_s^t \iota_1 w_{A^{-\alpha_1} F}(t)(t-u)^{\sigma-1}   u^{\beta-\sigma-1}du\notag\\
\leq &\iota_1 w_{A^{-\alpha_1} F}(t)s^{\beta-\sigma-1}\int_s^t  (t-u)^{\sigma-1}   du\notag\\
=&\frac{\iota_1 w_{A^{-\alpha_1} F}(t)s^{\beta -\sigma-1} (t-s)^\sigma}{\sigma}.   \label{TVT31}
\end{align}

The norm of   $J_{22}(t,s)$ is  evaluated  as follows:
\begin{align*}
&\|J_{22}(t,s)\|\\
&=\Big |\Big|\int_0^{t-s} AS(r)dr  \int_0^s A S(s-u)[A^{-\alpha_1} F(u)-A^{-\alpha_1} F(s)] du \Big |\Big|\\
&=\Big |\Big|\int_0^{t-s}  \int_0^s A^2 S(r+s-u)[A^{-\alpha_1} F(u)-A^{-\alpha_1} F(s)] du dr\Big |\Big|\\
&\leq \iota_2 w_{A^{-\alpha_1} F}(s) \int_0^{t-s}  \int_0^s  (r+s-u)^{-2}   (s-u)^{\sigma}   u^{\beta-\sigma-1}du dr\\
&=\iota_2w_{A^{-\alpha_1} F}(s)    \int_0^s  [(s-u)^{-1}-(t-u)^{-1}]   (s-u)^{\sigma}   u^{\beta-\sigma-1}du \\
&=\iota_2w_{A^{-\alpha_1} F}(s)   (t-s) \int_0^s  (t-u)^{-1}   (s-u)^{\sigma-1}   u^{\beta-\sigma-1}du \\
&= \iota_2w_{A^{-\alpha_1} F}(s)   (t-s) \int_0^s   (t-s+u)^{-1}  u^{\sigma-1}   (s-u)^{\beta-\sigma-1}du.
\end{align*}

Using the decomposition: $\int_0^s=\int_{\frac{s}{2}}^s+\int_0^{\frac{s}{2}}$, we have 
\begin{align}
&\|J_{22}(t,s)\|     \label{TVT32}\\
=& \iota_2w_{A^{-\alpha_1} F}(s)   (t-s)     \int_{\frac{s}{2}}^s   (t-s+u)^{-1}  u^{\sigma-1}   (s-u)^{\beta-\sigma-1}du       \notag\\
&+\iota_2w_{A^{-\alpha_1} F}(s)   (t-s)     \int_0^{\frac{s}{2}}   (t-s+u)^{-1}  u^{\sigma-1}   (s-u)^{\beta-\sigma-1}du.       \notag
\end{align}

In order to handle  the first integral of the latter equality, we have 
\begin{align*}
&(t-s)     \int_{\frac{s}{2}}^s   (t-s+u)^{-1}  u^{\sigma-1}   (s-u)^{\beta-\sigma-1}du\\
=& (t-s)^{\sigma}\int_{\frac{s}{2}}^s   (t-s)^{1-\sigma} (t-s+u)^{-1}  u^\sigma u^{-1}   (s-u)^{\beta-\sigma-1}du\\
\leq & 2(t-s)^{\sigma} s^{-1}\int_{\frac{s}{2}}^s  [ (t-s)^{1-\sigma} (t-s+u)^{-1}  u^\sigma]  (s-u)^{\beta-\sigma-1}du.
\end{align*}
Note that   for every $\frac{s}{2}\leq u\leq s,$
\begin{align*}
(t-s)^{1-\sigma}& (t-s+u)^{-1}  u^\sigma\\
&=\Big(\frac{t-s}{t-s+u}\Big)^{1-\sigma}  \Big(\frac{u}{t-s+u}\Big)^{\sigma}  \leq 1. 
\end{align*}
Hence, 
\begin{align}
&(t-s)     \int_{\frac{s}{2}}^s   (t-s+u)^{-1}  u^{\sigma-1}   (s-u)^{\beta-\sigma-1}du  \notag\\
\leq & 2(t-s)^{\sigma} s^{-1}\int_0^s  (s-u)^{\beta-\sigma-1}du     \notag\\
= & \frac{2(t-s)^{\sigma} s^{\beta -\sigma-1}}{\beta-\sigma}.              \label{TVT33}
\end{align}

In the meantime, 
\begin{align}
& (t-s)     \int_0^{\frac{s}{2}}   (t-s+u)^{-1}  u^{\sigma-1}   (s-u)^{\beta-\sigma-1}du    \notag\\
\leq &2^{1-\beta+\sigma} s^{\beta-\sigma-1} (t-s)   \int_0^{\frac{s}{2}}   (t-s+u)^{-1}  u^{\sigma-1}  du    \notag\\
= &2^{1-\beta+\sigma}     \int_0^{\frac{s}{2(t-s)}}   (1+r)^{-1}  r^{\sigma-1}  dr   s^{\beta-\sigma-1} (t-s)^\sigma     \notag\\
\leq  &2^{1-\beta+\sigma}  \int_0^\infty (1+r)^{-1}  r^{\sigma-1}  dr        s^{\beta -\sigma-1} (t-s)^\sigma.
 \label{TVT34}
\end{align}
(Notice  that  
$\int_0^\infty (1+r)^{-1}  r^{\sigma-1}  dr <\infty.)$

Thanks to the estimates \eqref{TVT32}, \eqref{TVT33},  and \eqref{TVT34},  there exists $C_2>0$ depending only on exponents such that 
\begin{equation} \label{TVT35}
\|J_{22}(t,s)\|\leq C_2        w_{A^{-\alpha_1} F}(s)s^{\beta -\sigma-1}  (t-s)^{\sigma}. 
\end{equation}

The norm of the last term, $J_{23}(t,s)$, is evaluated by using  \eqref{TVT6} and \eqref{TVT9}:
\begin{align}
\|J_{23}(t,s)\|&= \| [S(t-s)-S(t)]  [A^{-\alpha_1} F(s)-A^{-\alpha_1} F(t)]\|      \notag\\
&\leq  \| [S(t-s)-S(t)]\| w_{A^{-\alpha_1} F}(t)s^{\beta-\sigma-1}   (t-s)^{\sigma}        \notag\\
& \leq 2\iota_0 w_{A^{-\alpha_1} F}(t)s^{\beta -\sigma-1} (t-s)^\sigma.   \label{TVT36} 
\end{align}

Thanks to \eqref{TVT29}, \eqref{TVT30},  \eqref{TVT31}, \eqref{TVT35} and \eqref{TVT36}, we conclude that 
\begin{equation*}
J_2\in \mathcal F^{\beta,\sigma}((0,T];E),
\end{equation*}
and 
\begin{equation*}
\|J_2\|_{\mathcal F^{\beta,\sigma}(E)} \leq C_3\|A^{-\alpha_1} F\|_{\mathcal F^{\beta,\sigma}(E)} \hspace{1cm} \text{ with some }  C_3>0.
\end{equation*}

{\it Proof for $J_3$}. Since $t^{1-\beta} A^{-\alpha_1}F(t)$ has a limit as $t\to 0$, 
$$\lim_{t\to 0} t^{1-\beta}J_3(t)=\lim_{t\to 0} [I-S(t)]t^{1-\beta} A^{-\alpha_1}F(t)=0.$$
Furthermore,  \eqref{TVT6}, \eqref{TVT8} and \eqref{TVT9} give 
\begin{align*}
t^{1-\beta}\|J_3(t)\| &\leq  \|I-S(t)\|  t^{1-\beta} \|A^{-\alpha_1}F(t)\|\\
&\leq (1+\iota_0)  \|A^{-\alpha_1}F\|_{\mathcal F^{\beta,\sigma}(E)}, \hspace{1cm} 0\leq t\leq T.
\end{align*}

 We now write
\begin{align}
J_3(t)-J_3(s)=&[I-S(t)][A^{-\alpha_1}F(t)-A^{-\alpha_1}F(s)]   \label{TVT37}\\
&+[I-S(t-s)]S(s)A^{-\alpha_1}F(s).  \notag
\end{align}
The norm of the first term in the right-hand side of the   equality is estimated by using   \eqref{TVT6}, \eqref{TVT8} and \eqref{TVT9}:  
\begin{align}
&\|[I-S(t)][A^{-\alpha_1}F(t)-A^{-\alpha_1}F(s)]\| \notag\\
&\leq \|[I-S(t)]\| w_{A^{-\alpha_1}F}(t) s^{\beta-\sigma-1} (t-s)^\sigma \notag\\
&\leq (1+\iota_0) w_{A^{-\alpha_1}F}(t) s^{\beta -\sigma-1} (t-s)^\sigma \label{TVT38}  \\
&\leq (1+\iota_0) \|A^{-\alpha_1}F\|_{\mathcal F^{\beta,\sigma}(E)} s^{\beta -\sigma-1} (t-s)^\sigma. \notag
\end{align}

Meanwhile, the norm of the second term is evaluated by:
\begin{align*}
&\|[S(t-s)-I]S(s)A^{-\alpha_1}F(s)\|\\
&\leq \|[S(t-s)-I]A^{-\sigma}\|  s^{\beta-\sigma-1} \|s^\sigma A^\sigma  S(s) s^{1-\beta}A^{-\alpha_1}F(s)\|\\
&\leq \Big\|\int_0^{t-s} A^{1-\sigma} S(r) dr\Big\| s^{\beta-\sigma-1} \|  s^\sigma A^\sigma S(s) s^{1-\beta}A^{-\alpha_1}F(s)\|\\
&\leq \int_0^{t-s}  \iota_{1-\sigma} r^{\sigma-1}dr s^{\beta-\sigma-1} \| s^\sigma A^\sigma  S(s) s^{1-\beta}A^{-\alpha_1}F(s)\|\\
&= \frac{\iota_{1-\sigma}}{\sigma}   (t-s)^\sigma s^{\beta-\sigma-1} \| s^\sigma A^\sigma  S(s) s^{1-\beta}A^{-\alpha_1}F(s)\|.
\end{align*}
This means that  there exists $C_4>0$ such that
\begin{align}
&\|A^{\beta-1}[S(t-s)-I]S(s)A^{-\alpha_1}F(s)\| \notag\\
&\leq C_4  (t-s)^\sigma  s^{\beta  -\sigma-1}  \| s^\sigma A^\sigma  S(s) s^{1-\beta}A^{-\alpha_1}F(s)\| \label{TVT39} \\
&\leq C_4  (t-s)^\sigma  s^{\beta  -\sigma-1} s^\sigma \| A^\sigma  S(s)\| s^{1-\beta}\| A^{-\alpha_1}F(s)\|  \notag\\
&\leq C_4  \iota_\sigma \|A^{-\alpha_1}F\|_{\mathcal F^{\beta,\sigma}(E)}s^{\beta  -\sigma-1} (t-s)^\sigma. \notag
\end{align}

In addition, since $ s^{1-\beta}A^{-\alpha_1}F(s)$ has a limit as $s\to 0$,  \eqref{TVT10}  gives  
\begin{equation}   \label{TVT40}
\lim_{s\to 0} \| s^\sigma A^\sigma  S(s) s^{1-\beta}A^{-\alpha_1}F(s)\|=0.
\end{equation}

According to \eqref{TVT37},  \eqref{TVT38}, \eqref{TVT39} and  \eqref{TVT40},  it is seen that 
$$J_3\in\mathcal F^{\beta,\sigma}((0,T];E),$$ 
and   
$$\|J_3\|_{\mathcal F^{\beta,\sigma}(E)} \leq C_5 \|A^{-\alpha_1}F\|_{\mathcal F^{\beta,\sigma}(E)} \hspace{1cm} \text{ with some } C_5\geq 0.$$

We have thus proved that 
$$A^{1-\alpha_1} X \in \mathcal F^{\beta,\sigma}((0,T];E),$$
  and that there exists $C>0$ depending only on the exponents such that 
\begin{align}
\|A^{1-\alpha_1} X\|_{\mathcal F^{\beta,\sigma}(E)} \leq & \sum_{i=1}^3 \|J_i\|_{\mathcal F^{\beta,\sigma}(E)}   \notag \\
 \leq & C[\|A^{\beta-\alpha_1} \xi\|       +\|A^{-\alpha_1} F\|_{\mathcal F^{\beta,\sigma}(E)}]. \label{TVT41}  
\end{align}

The estimate \eqref{TVT24}  then follows from \eqref{TVT26} and \eqref{TVT41}.

{\bf Step 3}. Let us  show the remain of the theorem.

Consider the case $  \alpha_1 \leq 0.$ 
 Thanks to Theorem \ref{T1}, $X$ is a strict solution of \eqref{TVT11} in $ \mathcal C^1((0,T];E)$. 
On  the account of \eqref{TVT22}, it is seen that
\begin{equation*} 
A^{-\alpha_1}\frac{dX}{dt}=A^{-\alpha_1} F(t)- A^{1-\alpha_1}X(t), \hspace{1cm} 0<t\leq T.
\end{equation*}
Since both $A^{-\alpha_1} F$ and $A^{1-\alpha_1} X$ belong to $  \mathcal F^{\beta,\sigma}((0,T];E),$ 
  $$ A^{-\alpha_1}\frac{dX}{dt} \in \mathcal F^{\beta,\sigma}((0,T];E).$$
 In addition,   \eqref{TVT25}  follows from  \eqref{TVT24}  and the estimate:
$$\|A^{-\alpha_1}\frac{dX}{dt}\|_{\mathcal F^{\beta,\sigma}(E)} \leq \|A^{-\alpha_1} F\|_{\mathcal F^{\beta,\sigma}(E)}
 +\|A^{-\alpha_1}X\|_{\mathcal F^{\beta,\sigma}(E)}. $$
By Steps 1-3, the proof is now complete. 
\end{proof}

\begin{remark}
\begin{itemize}
  \item 
Theorem \ref{T2}  improves   Theorem 1 in  \cite{Ton1}. The condition $\frac{1+\sigma}{4} <\alpha_1\leq \frac\beta{2}$ in \cite[Theorem 1]{Ton1} has been removed.
 \item   Theorem \ref{T2} generalizes a result in \cite{yagi}. Indeed,    \cite[Theorem 3.5]{yagi} is a special case of Theorem \ref{T2} (with $\alpha_1=0$).
  \end{itemize}  
\end{remark}

\section{The stochastic case}   \label{section4}

Let us consider the stochastic evolution equation \eqref{TVT1}, where $F$ and $G$ satisfy the following conditions:
\begin{itemize}
  \item [(\rm{F2})]  \hspace{0.4cm} For some  $0<\sigma< \beta-\frac{1}{2}\leq \frac{1}{2} $  and  $-\infty< \alpha_1<1$,
$$ A^{-\alpha_1}F\in \mathcal F^{\beta, \sigma}((0,T];E).$$
  \item [\rm{(G)}]   \hspace{0.4cm} With the $\sigma$ and $\beta$ as above and some $-\infty<\alpha_2<\frac{1}{2}-\sigma$,
$$
  A^{-\alpha_2} G\in \mathcal F^{\beta, \sigma} ((0,T];\gamma(H;E)). $$ 
\end{itemize}

 Throughout this section, the notation $C$  stands for a universal constant which is determined in each occurrence by the exponents. 

Denote by  $W_G$  the stochastic convolution defined by
$$W_G(t)=\int_0^t S(t-s) G(s)dW(s), \hspace{1cm} 0\leq t\leq T. $$
The next two theorems  show 
the regularity of  $W_G$.

\begin{theorem} \label{T3}
Let {\rm (A)} and {\rm (G)} be satisfied.  Let 
 $-\infty < \kappa_1 <\frac{1}{2}-\alpha_2$ and $-\infty<\kappa_2< \min\{\frac{1}{2}-\sigma-\alpha_2,1\}$.
Then,
$$ W_G\in    \mathcal C((0,T];\mathcal D(A^{\kappa_1}))  \hspace{1cm}\text{ a.s.,}$$
$$A^{\kappa_2} W_G\in  \mathcal C^\gamma([\epsilon,T];E) \hspace{1cm}\text{ a.s.,}$$
and 
$$\mathbb E \|A^{\kappa_2} W_G\|\in  \mathcal F^{\beta,\sigma}((0,T];\mathbb R) $$
for any   $  0<\gamma<\sigma $  and $0<\epsilon\leq T$.
In addition, 
\begin{align}
 \mathbb E & \|A^{\kappa_1}W_G(t)\|      \label{TVT43}\\
\leq  &  C\|A^{-\alpha_2}G\|_{\mathcal F^{\beta,\sigma}(\gamma(H;E))}   \max\{t^{\beta-\alpha_2-\kappa_1-\frac{1}{2}}, t^{\beta-\frac{1}{2}}\}, \hspace{1cm} 0<t\leq T,    \notag  
\end{align}
where $C$ is some constant depedning only on the exponents.
Furthermore,  if  $\kappa_1\leq \beta-\alpha_2-\frac{1}{2},$ then
$$ W_G\in    \mathcal C([0,T];\mathcal D(A^{\kappa_1}))  \hspace{1cm}\text{ a.s.}$$
\end{theorem}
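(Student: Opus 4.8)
The plan is to establish the regularity of the stochastic convolution $W_G$ by combining the factorization method with the deterministic estimates of Section \ref{section3}, exactly parallel to how Theorem \ref{T1} was handled, but now controlling moments via Theorem \ref{T9}. First I would estimate $\mathbb E\|A^{\kappa_1}W_G(t)\|$. Applying Proposition \ref{T10} to pull $A^{\kappa_1}$ inside the stochastic integral gives $A^{\kappa_1}W_G(t)=\int_0^t A^{\kappa_1}S(t-s)G(s)\,dW(s)$, and since $E$ is of type 2, the Cauchy--Schwarz inequality together with Theorem \ref{T9} yields
\begin{equation*}
\mathbb E\|A^{\kappa_1}W_G(t)\| \leq \Big(c(E)\int_0^t \mathbb E\|A^{\kappa_1}S(t-s)G(s)\|_{\gamma(H;E)}^2\,ds\Big)^{\frac12}.
\end{equation*}
Writing $A^{\kappa_1}S(t-s)G(s)=A^{\kappa_1+\alpha_2}S(t-s)\,A^{-\alpha_2}G(s)$ and using Lemma \ref{T2.12} with the bounds \eqref{TVT7}, \eqref{TVT8}, \eqref{TVT9} and the defining estimate \eqref{TVT6} of $\mathcal F^{\beta,\sigma}$, the integrand is controlled by $(t-s)^{-2(\kappa_1+\alpha_2)}s^{2\beta-2}$ (or the $\alpha_2<0$ variant), and the condition $\kappa_1<\tfrac12-\alpha_2$ guarantees the resulting Beta-type integral converges. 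This produces \eqref{TVT43}.

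Second, I would prove the spatial regularity $W_G\in\mathcal C((0,T];\mathcal D(A^{\kappa_1}))$ and the membership $\mathbb E\|A^{\kappa_2}W_G\|\in\mathcal F^{\beta,\sigma}((0,T];\mathbb R)$. For the latter, the key is that the scalar function $t\mapsto\mathbb E\|A^{\kappa_2}W_G(t)\|$ should satisfy the three conditions \eqref{TVT3}, \eqref{TVT4}, \eqref{TVT5}. The weighted sup bound follows from the estimate just derived with $\kappa_1$ replaced by $\kappa_2$; the H\"older-in-time increment bound requires decomposing $A^{\kappa_2}W_G(t)-A^{\kappa_2}W_G(s)$ into a ``semigroup difference'' piece $[S(t-s)-I]A^{\kappa_2}W_G(s)$ and a ``fresh noise'' piece $\int_s^t A^{\kappa_2}S(t-u)G(u)\,dW(u)$, taking expectations of norms through Theorem \ref{T9} on each, and using $\|[S(t-s)-I]A^{-\sigma}\|\leq C(t-s)^\sigma$ together with the hypothesis $\kappa_2<\tfrac12-\sigma-\alpha_2$ to close the integrals. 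This same splitting, now applied pathwise, gives the second-moment increment bound
\begin{equation*}
\mathbb E\|A^{\kappa_2}W_G(t)-A^{\kappa_2}W_G(s)\|^2 \leq C(t-s)^{2\gamma}
\end{equation*}
on $[\epsilon,T]$ for any $\gamma<\sigma$; since $W_G$ is Gaussian by Theorem \ref{T9}, the Kolmogorov criterion in its Gaussian form, Theorem \ref{Kol2}, upgrades this to a version in $\mathcal C^\gamma([\epsilon,T];E)$.

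Third, the continuity up to $t=0$ in the final statement, namely $W_G\in\mathcal C([0,T];\mathcal D(A^{\kappa_1}))$ when $\kappa_1\leq\beta-\alpha_2-\tfrac12$: under this sharper constraint the exponent $\beta-\alpha_2-\kappa_1-\tfrac12$ in \eqref{TVT43} becomes nonnegative, so $\max\{t^{\beta-\alpha_2-\kappa_1-\frac12},t^{\beta-\frac12}\}\to 0$ as $t\to 0$ (recall $\beta>\tfrac12$ from {\rm (F2)}/{\rm (G)}), whence $\mathbb E\|A^{\kappa_1}W_G(t)\|\to 0$. To turn this $L^1$ decay into almost-sure continuity at $0$ I would again invoke the Gaussian Kolmogorov theorem on increments $A^{\kappa_1}W_G(t)-A^{\kappa_1}W_G(s)$ for $s,t$ near $0$, exploiting that the estimates are uniform enough to give the $(t-s)^{2\gamma}$ bound up to the origin, and combine it with the vanishing of the mean to extend the modification continuously to $t=0$ with value $0$.

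The main obstacle I expect is the H\"older increment estimate in the second step: the ``semigroup difference'' term forces one to distribute a factor $A^{-\sigma}$ onto $S(t-s)-I$ and a compensating $A^\sigma$ onto $A^{\kappa_2}W_G(s)$, which in turn demands a careful bookkeeping of the singular weights $s^{\beta-\sigma-1}$ near $s=0$ so that the Beta integrals remain finite precisely under $\kappa_2<\tfrac12-\sigma-\alpha_2$. This is the analogue of the delicate $J_{22}$ computation in the proof of Theorem \ref{T2}, and getting the exponents to balance while keeping the constant uniform is where the real work lies; the rest is a systematic application of Theorem \ref{T9}, Lemma \ref{T2.12}, and the Gaussian Kolmogorov theorem.
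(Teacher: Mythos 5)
Your proposal is sound and reaches all the stated conclusions, but the heart of the argument --- the second-moment increment estimate for $A^{\kappa_2}W_G$ --- is organized quite differently from the paper. You split $A^{\kappa_2}W_G(t)-A^{\kappa_2}W_G(s)$ into just two pieces, $[S(t-s)-I]A^{\kappa_2}W_G(s)$ and the fresh-noise integral $\int_s^t A^{\kappa_2}S(t-u)G(u)\,dW(u)$, paying for the first by writing $[S(t-s)-I]A^{-\sigma}\cdot A^{\sigma+\kappa_2}W_G(s)$ and controlling $\mathbb E\|A^{\sigma+\kappa_2}W_G(s)\|^2$ through the It\^o-type bound; the hypothesis $\kappa_2<\frac12-\sigma-\alpha_2$ enters exactly as the condition $\int_0^s(s-r)^{-2(\sigma+\kappa_2+\alpha_2)}r^{2\beta-2}\,dr<\infty$. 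The paper instead uses a six-term decomposition ($K_1,\dots,K_6$) that tracks the differences $G(r)-G(t)$, $G(r)-G(s)$, $G(s)-G(t)$ and the constant part $G(t)$ separately, mirroring the deterministic $J_1,J_2,J_3$ computation of Theorem \ref{T2}, and the vanishing of the modulus $m(t)$ there comes partly from $w_{A^{-\alpha_2}G}(t)\to0$. Your route is leaner: checking the exponents, both pieces close using only the growth bound $\|A^{-\alpha_2}G(r)\|_{\gamma(H;E)}\leq\|A^{-\alpha_2}G\|_{\mathcal F^{\beta,\sigma}}r^{\beta-1}$, the extra smallness factors being pure powers $s^{1-2(\kappa_2+\alpha_2)}$ and $s^{2\sigma}$, so the H\"older continuity of $G$ is never actually invoked; what the paper's finer decomposition buys is a modulus expressed through $w_{A^{-\alpha_2}G}$, but for this theorem that is not needed. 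The remaining steps (Jensen plus Theorem \ref{T9} for \eqref{TVT43}, the Gaussian Kolmogorov theorem \ref{Kol2} on $[\epsilon,T]$, and the verification of \eqref{TVT3}--\eqref{TVT5} for $\mathbb E\|A^{\kappa_2}W_G\|$) coincide with the paper's.

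One caveat on your third step: the increment bound you derive carries the singular weight $s^{2(\beta-\sigma-1)}$ with $\beta-\sigma-1<0$, so it does \emph{not} give a uniform $(t-s)^{2\gamma}$ estimate ``up to the origin,'' and Theorem \ref{Kol2} cannot be applied on $[0,T]$ as stated; the decay $\mathbb E\|A^{\kappa_1}W_G(t)\|\to0$ alone gives convergence in mean, not almost-sure continuity at $t=0$. To close this you would need, e.g., an unweighted increment bound $\mathbb E\|A^{\kappa_1}W_G(t)-A^{\kappa_1}W_G(s)\|^2\leq C(t-s)^{\epsilon}$ valid down to $s=0$ (which the substitution $u=s+(t-s)v$ does yield when $\kappa_1<\beta-\alpha_2-\frac12$ strictly, but degenerates in the borderline case $\kappa_1=\beta-\alpha_2-\frac12$). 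The paper is equally terse at this point, asserting continuity at $0$ directly from the finiteness of \eqref{TVT44} at $t=0$, so this is a shared soft spot rather than a defect specific to your argument.
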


\begin{proof}
 We divide the proof into three  steps.

{\bf Step 1}.   Let us show that   
 \begin{itemize}
  \item 
 $W_G\in \mathcal C((0,T];\mathcal D(A^{\kappa_1})) $  a.s.
\item  $W_G$ satisfies \eqref{TVT43}. 
\item  $W_G\in \mathcal C([0,T];\mathcal D(A^{\kappa_1}) $  a.s. when  $\kappa_1\leq \beta-\frac{1}{2}.$
\end{itemize}

We have
\begin{align*}
&\int_0^t\|A^{\kappa_1} S(t-s)G(s)\|_{\gamma(H;E)}^2 ds\\
&\leq \int_0^t\| A^{\alpha_2+\kappa_1}  S(t-s)\|^2 \|A^{-\alpha_2}G(s)\|_{\gamma(H;E)}^2 ds\\
&\leq  \|A^{-\alpha_2}G\|_{\mathcal F^{\beta,\sigma}(\gamma(H;E))}^2   \int_0^t\| A^{\alpha_2+\kappa_1}  S(t-s)\|^2 s^{2(\beta-1)} ds.
\end{align*}
If $\alpha_2+\kappa_1\geq 0,$ then 
\begin{align}
&\int_0^t\|A^{\kappa_1} S(t-s)G(s)\|_{\gamma(H;E)}^2 ds  \notag\\
\leq &  \iota_{\alpha_2+\kappa_1}^2   \|A^{-\alpha_2}G\|_{\mathcal F^{\beta,\sigma}(\gamma(H;E))}^2\int_0^t  (t-s)^{-2(\alpha_2+\kappa_1)} s^{2(\beta-1)}ds\notag\\
=&  \iota_{\alpha_2+\kappa_1}^2   \|A^{-\alpha_2}G\|_{\mathcal F^{\beta,\sigma}(\gamma(H;E))}^2 B(2\beta-1,1-2\alpha_2-2\kappa_1)    \label{TVT44}\\
& \times t^{2(\beta-\alpha_2-\kappa_1)-1}  \notag \\
< & \infty, \hspace{3cm}  0<t\leq T.  \notag
\end{align}
Meanwhile, if $\alpha_2+\kappa_1< 0,$ then 
\begin{align}
&\int_0^t\|A^{\kappa_1} S(t-s)G(s)\|_{\gamma(H;E)}^2 ds  \notag\\
& \leq  C   \|A^{-\alpha_2}G\|_{\mathcal F^{\beta,\sigma}(\gamma(H;E))}^2\int_0^t  s^{2(\beta-1)}ds  \notag\\
&\leq C   \|A^{-\alpha_2}G\|_{\mathcal F^{\beta,\sigma}(\gamma(H;E))}^2 t^{2\beta-1}<\infty, \hspace{2cm}  0\leq t\leq T.  \label{TVT45}
\end{align}
Therefore, $\int_0^\cdot A^{\kappa_1} S(\cdot-s)G(s)dW(s)$ is well-defined and  continuous on $(0,T]$. Since $A^{\kappa_1}$ is closed, we obtain that 
$$A^{\kappa_1} W_G(t)=\int_0^tA^{\kappa_1} S(t-s)G(s)dW(s).$$
 Thus,
$A^{\kappa_1} W_G$ is continuous on $(0,T]$, i.e.
$$W_G\in \mathcal C((0,T];\mathcal D(A^{\kappa_1})) \hspace{1cm} \text{ a.s.}$$

In addition,   \eqref{TVT44} and \eqref{TVT45} give  
\begin{align*}
 \mathbb E \|A^{\kappa_1}W_G(t)\|
%
\leq & \sqrt{ \mathbb E\Big\| \int_0^t A^{\kappa_1}S(t-s)G(s)dW(s) \Big\|^2}\\
%
\leq  & \sqrt{c(E) \int_0^t \|A^{\kappa_1}S(t-s)G(s)\|^2ds }\\
%
\leq &C\|A^{-\alpha_2}G\|_{\mathcal F^{\beta,\sigma}(\gamma(H;E))}   \max\{t^{\beta-\alpha_2-\kappa_1-\frac{1}{2}}, t^{\beta-\frac{1}{2}}\}.
\end{align*}
The estimate  \eqref{TVT43} therefore has been proved. 
 
Furthermore, when $\kappa_1\leq \beta-\alpha_2-\frac{1}{2},$  \eqref{TVT44} also holds true at $t=0$. Thus.  $A^{\kappa_1}W_G$ is also continuous at $t=0$.

{\bf Step 2}.   
Let us verify that there exists  an increasing function  $m(\cdot)$ defined on $(0,T]$ such that 
$$\lim_{t\to 0} m(t)=0$$
 and 
\begin{align*}
\mathbb E\|A^{\kappa_2} W_G(t)-A^{\kappa_2} W_G(s)\|^2 \leq & m(t)^2 s^{2(\beta-\sigma-1)} (t-s)^{2\sigma}, \hspace{0.5cm} 0< s\leq t\leq T.
\end{align*}

From the expression
\begin{align*}
A^{\kappa_2} W_G(t)=&\int_0^t A^{\kappa_2} S(t-r)[G(r)-G(t)]dW(r)\\
&+\int_0^t A^{\kappa_2}  S(t-r) G(t)dW(r),
\end{align*}
it is seen that 
\begin{align*}
&A^{\kappa_2} W_G(t)-A^{\kappa_2} W_G(s)\\
=&\int_s^t A^{\kappa_2} S(t-r)[G(r)-G(t)]dW(r)\\
&+\int_0^sA^{\kappa_2} S(t-r)[G(r)-G(t)]dW(r)\\
&-\int_0^s A^{\kappa_2} S(s-r)[G(r)-G(s)]dW(r)\\
&+\int_s^t A^{\kappa_2} S(t-r)G(t)dW(r)+\int_0^s A^{\kappa_2} S(t-r)G(t)dW(r)\\
&-\int_0^s A^{\kappa_2} S(s-r)G(s)dW(r)\\
=&\int_s^t A^{\kappa_2} S(t-r)[G(r)-G(t)]dW(r)\\
&+\int_0^s A^{\kappa_2} S(t-s)S(s-r)[G(r)-G(s)+G(s)-G(t)]dW(r)\\
&-\int_0^s A^{\kappa_2} S(s-r)[G(r)-G(s)]dW(r)+\int_s^t A^{\kappa_2} S(t-r)G(t)dW(r)\\
&+\int_0^s A^{\kappa_2} S(t-r)G(t)dW(r) \\
&+\int_0^s A^{\kappa_2} S(s-r)[G(t)-G(s)-G(t)]dW(r)\\
=&\int_s^t A^{\kappa_2} S(t-r)[G(r)-G(t)]dW(r)\\
&+\int_0^s [S(t-s)-I]A^{\kappa_2} S(s-r)[G(r)-G(s)]dW(r)\\
&+\int_0^s A^{\kappa_2} S(t-r)[G(s)-G(t)]dW(r)+\int_s^t A^{\kappa_2} S(t-r)G(t)dW(r)\\
&+\int_0^s A^{\kappa_2} S(s-r)[G(t)-G(s)]dW(r)\\
&+\int_0^s A^{\kappa_2} [S(t-r)-S(s-r)]G(t)dW(r)\\
=&K_1+K_2+K_3+K_4+K_5+K_6.
\end{align*}

Let us give estimates for   $\mathbb E\|K_i\|^2 (i=1,\dots,6)$. For $\mathbb E\|K_1\|^2$,   \eqref{TVT6} gives 
\begin{align*}
\mathbb E\|K_1\|^2 \leq  &c(E) \int_s^t \|A^{\kappa_2} S(t-r)[G(r)-G(t)]\|_{\gamma(H;E)}^2dr\\
\leq &c(E) \int_s^t \|A^{\alpha_2+\kappa_2} S(t-r)\|^2 \|A^{-\alpha_2}G(r)-A^{-\alpha_2}G(t)\|_{\gamma(H;E)}^2dr\\
\leq & c(E) w_{A^{-\alpha_2}G}(t)^2 \int_s^t \|A^{\alpha_2+\kappa_2} S(t-r)\|^2 (t-r)^{2\sigma} r^{2(\beta-\sigma-1)}dr,
\end{align*}
where 
$$w_{A^{-\alpha_2}G}(t)=\sup_{0\leq s< t}\frac{s^{1-\beta+\sigma}\|A^{-\alpha_2}G(t)-A^{-\alpha_2}G(s)\|_{\gamma(H;E)}}{(t-s)^\sigma}.$$
If  $\alpha_2+\kappa_2\geq 0,$ then by \eqref{TVT7}, 
\begin{align*}
\mathbb E\|K_1\|^2  \leq &c(E) \iota_{\alpha_2+\kappa_2}^2 w_{A^{-\alpha_2}G}(t)^2  \int_s^t (t-r)^{2(\sigma-\alpha_2-\kappa_2)} r^{2(\beta-\sigma-1)}dr\\
\leq &c(E) \iota_{\alpha_2+\kappa_2}^2 w_{A^{-\alpha_2}G}(t)^2  s^{2(\beta-\sigma-1)}  \int_s^t (t-r)^{2(\sigma-\alpha_2-\kappa_2)}dr\\
= &c(E) \iota_{\alpha_2+\kappa_2}^2  w_{A^{-\alpha_2}G}(t)^2  s^{2(\beta-\sigma-1)}   \frac{(t-s)^{1+2(\sigma-\alpha_2-\kappa_2)}}{1+2(\sigma-\alpha_2-\kappa_2)}\\
\leq &C   w_{A^{-\alpha_2}G}(t)^2    t^{1-2(\alpha_2+\kappa_2)}    s^{2(\beta-\sigma-1)}(t-s)^{2\sigma}.
\end{align*}
If  $\alpha_2+\kappa_2< 0,$ then by \eqref{TVT8} and \eqref{TVT9},
\begin{align*}
\mathbb E\|K_1\|^2  \leq & C   w_{A^{-\alpha_2}G}(t)^2  \int_s^t (t-r)^{2\sigma} r^{2(\beta-\sigma-1)}dr\\
\leq & C   w_{A^{-\alpha_2}G}(t)^2   s^{2(\beta-\sigma-1)}  \int_s^t (t-r)^{2\sigma}dr\\
=& \frac{C }{1+2\sigma}  w_{A^{-\alpha_2}G}(t)^2   s^{2(\beta-\sigma-1)}  (t-s)^{1+2\sigma}\\
\leq & C   w_{A^{-\alpha_2}G}(t)^2   s^{2(\beta-\sigma-1)}  (t-s)^{2\sigma}.
\end{align*}
Hence,
$$\mathbb E\|K_1\|^2  \leq C   w_{A^{-\alpha_2}G}(t)^2  \max\{t^{1-2(\alpha_2+\kappa_2)},t\} s^{2(\beta-\sigma-1)}  (t-s)^{2\sigma}.$$

For $\mathbb E\|K_2\|^2$  we have
\begin{align*}
&\mathbb E\|K_2\|^2 \\
\leq  & c(E) \int_0^s \Big\|\int_0^{t-s} AS(\rho) d\rho A^{\kappa_2}  S(s-r) [G(r)-G(s)]\Big\|_{\gamma(H;E)}^2 dr\\
 \leq &  c(E) \Big\|\int_0^{t-s} A^{1-\sigma} S(\rho) d\rho\Big\|^2  \int_0^s  \|A^{\alpha_2+\kappa_2+\sigma} S(s-r)\|^2 \\
 &\times\|A^{-\alpha_2}G(r)-A^{-\alpha_2}G(s)\|_{\gamma(H;E)}^2 dr\\
 \leq & C w_{A^{-\alpha_2}G}(s)^2 \Big(\int_0^{t-s} \rho^{-1+\sigma} d\rho\Big)^2  \int_0^s    r^{2(\beta-\sigma-1)} (s-r)^{2\sigma}  \\
 &\times \|A^{\alpha_2+\kappa_2+\sigma} S(s-r)\|^2  dr\\
%
%
 \leq & C w_{A^{-\alpha_2}G}(s)^2  (t-s)^{2\sigma}  \int_0^s    r^{2(\beta-\sigma-1)} (s-r)^{2\sigma}  \|A^{\alpha_2+\kappa_2+\sigma} S(s-r)\|^2  dr.
 \end{align*}
If $\alpha_2+\kappa_2+\sigma\geq 0,$ then
 \begin{align*}
&\mathbb E\|K_2\|^2   \\
\leq  &C w_{A^{-\alpha_2}G}(s)^2 (t-s)^{2\sigma} \int_0^s   (s-r)^{-2(\alpha_2+\kappa_2)}   r^{2(\beta-\sigma-1)}dr\\
= & C  B(2\beta-2\sigma-1,1-2\alpha_2-2\kappa_2)w_{A^{-\alpha_2}G}(s)^2 s^{2(\beta-\sigma-\alpha_2-\kappa_2)-1}  (t-s)^{2\sigma}\\
\leq & C w_{A^{-\alpha_2}G}(s)^2  s^{1-2(\alpha_2+\kappa_2)}    s^{2(\beta-\sigma-1)} (t-s)^{2\sigma}.
\end{align*}
If $\alpha_2+\kappa_2+\sigma< 0,$ then
\begin{align*}
\mathbb E\|K_2\|^2 \leq  &C w_{A^{-\alpha_2}G}(s)^2 (t-s)^{2\sigma} \int_0^s   (s-r)^{2\sigma}   r^{2(\beta-\sigma-1)}dr\\
= & C  B(2\beta-2\sigma-1,1+2\sigma)w_{A^{-\alpha_2}G}(s)^2 s^{2\beta-1}  (t-s)^{2\sigma}\\
\leq & C w_{A^{-\alpha_2}G}(s)^2 s^{1+2\sigma}  s^{2(\beta-\sigma-1)} (t-s)^{2\sigma}.
\end{align*}
Hence,
$$\mathbb E\|K_2\|^2  \leq C w_{A^{-\alpha_2}G}(s)^2 \max\{ s^{1-2(\alpha_2+\kappa_2)}, s^{1+2\sigma}\} s^{2(\beta-\sigma-1)} (t-s)^{2\sigma}.$$

For $\mathbb E\|K_3\|^2$, 
\begin{align*}
\mathbb E\|K_3\|^2 \leq  & c(E) \int_0^s \|A^{\alpha_2+\kappa_2} S(t-r) [A^{-\alpha_2}G(s)-A^{-\alpha_2}G(t)]\|_{\gamma(H;E)}^2dr\\
\leq &  c(E) w_{A^{-\alpha_2}G}(t)^2    s^{2(\beta-\sigma-1)} (t-s)^{2\sigma} \int_0^s \|A^{\alpha_2+\kappa_2} S(t-r)\|^2  dr.
\end{align*}
If $\alpha_2+\kappa_2\geq 0,$ then
\begin{align*}
\mathbb E\|K_3\|^2  \leq & C \int_0^s (t-r)^{-2(\alpha_2+\kappa_2)}dr  w_{A^{-\alpha_2}G}(t)^2   s^{2(\beta-\sigma-1)} (t-s)^{2\sigma}\\
 \leq & C  [t^{1-2(\alpha_2+\kappa_2)}-(t-s)^{1-2(\alpha_2+\kappa_2)} ] w_{A^{-\alpha_2}G}(t)^2  s^{2(\beta-\sigma-1)} (t-s)^{2\sigma}\\
 \leq & C w_{A^{-\alpha_2}G}(t)^2 t^{1-2(\alpha_2+\kappa_2)} s^{2(\beta-\sigma-1)}   (t-s)^{2\sigma},
\end{align*}
whereas if $\alpha_2+\kappa_2< 0,$ then
\begin{align*}
\mathbb E\|K_3\|^2  \leq &  \int_0^s C dr  w_{A^{-\alpha_2}G}(t)^2  s^{2(\beta-\sigma-1)} (t-s)^{2\sigma}\\
 \leq & C w_{A^{-\alpha_2}G}(t)^2  s^{2(\beta-\sigma-1)}   (t-s)^{2\sigma}.
\end{align*}
Hence,
$$\mathbb E\|K_3\|^2\leq  C w_{A^{-\alpha_2}G}(t)^2 \max\{t^{1-2(\alpha_2+\kappa_2)}, 1\} s^{2(\beta-\sigma-1)}   (t-s)^{2\sigma}.$$

For $\mathbb E\|K_4\|^2$, 
\begin{align*}
\mathbb E\|K_4\|^2\leq  & c(E) \int_s^t \|A^{\kappa_2} S(t-r) G(t)\|_{\gamma(H;E)}^2dr\\
\leq & c(E)  \int_s^t \|A^{\alpha_2+\kappa_2} S(t-r)\|^2 \|A^{-\alpha_2}G(t)\|_{\gamma(H;E)}^2dr\\
\leq &  c(E)  \|A^{-\alpha_2}G\|_{\mathcal F^{\beta,\sigma}(\gamma(H;E)}^2  t^{2(\beta-1)}  \int_s^t \|A^{\alpha_2+\kappa_2} S(t-r)\|^2 dr\\
\leq &  c(E)  \|A^{-\alpha_2}G\|_{\mathcal F^{\beta,\sigma}(\gamma(H;E)}^2  t^{2\sigma} s^{2(\beta-\sigma-1)}  \int_s^t \|A^{\alpha_2+\kappa_2} S(t-r)\|^2 dr,
\end{align*}
here we used the inequality 
$$t^{2(\beta-1)}=t^{2\sigma} t^{2(\beta-\sigma-1)} \leq  t^{2\sigma} s^{2(\beta-\sigma-1)}.$$

The integral $\int_s^t \|A^{\alpha_2+\kappa_2} S(t-r)\|^2 dr$ can be estimated 
similarly to the integral $ \int_0^s \|A^{\alpha_2+\kappa_2} S(t-r)\|^2  dr$ in the estimate for $\mathbb E\|K_3\|^2$. Thereby,   
\begin{align*}
 &\int_s^t \|A^{\alpha_2+\kappa_2} S(t-r)\|^2  dr \\
& \leq C \max\{ (t-s)^{1-2(\alpha_2+\kappa_2)}, t-s \}  \\
& =C \max\{ (t-s)^{1-2(\alpha_2+\kappa_2+\sigma)}, (t-s)^{1-2\sigma} \} (t-s)^{2\sigma}  \\
&\leq  C \max\{ (t^{1-2(\alpha_2+\kappa_2+\sigma)}, t^{1-2\sigma} \} (t-s)^{2\sigma}.
\end{align*}
Therefore,
\begin{align*}
\mathbb E\|K_4\|^2
\leq  & C \|A^{-\alpha_2}G\|_{\mathcal F^{\beta,\sigma}(\gamma(H;E)}^2   \max\{ (t^{1-2(\alpha_2+\kappa_2)}, t \}   s^{2(\beta-\sigma-1)}  (t-s)^{2\sigma}.
\end{align*}

For $\mathbb E\|K_5\|^2$, 
\begin{align*}
\mathbb E\|K_5\|^2 \leq & c(E) \int_0^s \|A^{\alpha_2+\kappa_2} S(s-r)[A^{-\alpha_2} G(t)-A^{-\alpha_2} G(s)]\|_{\gamma(H;E)}^2dr\\
\leq & c(E)  w_{A^{-\alpha_2}G}(t)^2 s^{2(\beta-\sigma-1)}  (t-s)^{2\sigma} \int_0^s \|A^{\alpha_2+\kappa_2} S(s-r)\|^2 dr.
\end{align*}
By considering two cases: $\alpha_2+\kappa_2 \geq 0$ and $\alpha_2+\kappa_2 < 0$ as for $\mathbb E\|K_3\|^2$ and $\mathbb E\|K_4\|^2$, we arrive at 
\begin{align*}
\mathbb E\|K_5\|^2 
 \leq C w_{A^{-\alpha_2}G}(t)^2  \max\{s^{1-2(\alpha_2+\kappa_2)},s \} s^{2(\beta-\sigma-1)}   (t-s)^{2\sigma}.
\end{align*}

Finally, for $\mathbb E\|K_6\|^2$ we have
\begin{align*}
&\mathbb E\|K_6\|^2\\
\leq & c(E)  \int_0^s \|A^{\kappa_2} [S(t-r)-S(s-r)]G(t)\|_{\gamma(H;E)}^2 dr\\
\leq &  c(E)  \int_0^s \|A^{\alpha_2+\kappa_2+\sigma}S(s-r)\|^2 \|S(t-s)-I]A^{-\sigma}\|^2 \|A^{-\alpha_2}G(t)\|_{\gamma(H;E)}^2 dr\\
= &  c(E)  \int_0^s \|A^{\alpha_2+\kappa_2+\sigma}S(s-r)\|^2 dr \Big\|\int_0^{t-s} A^{1-\sigma} S(\rho)d\rho\Big\|^2  \|A^{-\alpha_2}G(t)\|_{\gamma(H;E)}^2\\
\leq  &  c(E)  \int_0^s \|A^{\alpha_2+\kappa_2+\sigma}S(s-r)\|^2 dr \Big(\int_0^{t-s} \rho^{-1+\sigma} d\rho\Big)^2 \notag\\
& \times \|A^{-\alpha_2}G\|_{\mathcal F^{\beta,\sigma}(\gamma(H;E)}^2 t^{2(\beta-1)}\\
\leq & C  \int_0^s \|A^{\alpha_2+\kappa_2+\sigma}S(s-r)\|^2 dr  \|A^{-\alpha_2}G\|_{\mathcal F^{\beta,\sigma}(\gamma(H;E)}^2 t^{2(\beta-1)} (t-s)^{2\sigma}.
\end{align*}

If $\alpha_2+\kappa_2+\sigma\geq 0,$ then
\begin{align*}
\mathbb E\|K_6\|^2 \leq & C   \int_0^s (s-r)^{-2(\alpha_2+\kappa_2+\sigma)} dr  \|A^{-\alpha_2}G\|_{\mathcal F^{\beta,\sigma}(\gamma(H;E))}^2 t^{2(\beta-1)} (t-s)^{2\sigma}\\
\leq &C  \|A^{-\alpha_2}G\|_{\mathcal F^{\beta,\sigma}(\gamma(H;E))}^2   s^{1-2(\alpha_2+\kappa_2+\sigma)}t^{2\sigma} t^{2(\beta-\sigma-1)} (t-s)^{2\sigma}\\
\leq & C   \|A^{-\alpha_2}G\|_{\mathcal F^{\beta,\sigma}(\gamma(H;E))}^2  t^{1-2(\alpha_2+\kappa_2)} s^{2(\beta-\sigma-1)} (t-s)^{2\sigma}.
\end{align*}
If  $\alpha_2+\kappa_2+\sigma< 0,$ then
\begin{align*}
 \mathbb E\|K_6\|^2   
\leq &    \int_0^s Cdr  \|A^{-\alpha_2}G\|_{\mathcal F^{\beta,\sigma}(\gamma(H;E))}^2 t^{2(\beta-1)} (t-s)^{2\sigma}\\
= &C s  \|A^{-\alpha_2}G\|_{\mathcal F^{\beta,\sigma}(\gamma(H;E))}^2t^{2\sigma} t^{2(\beta-\sigma-1)} (t-s)^{2\sigma}\\
\leq & C\|A^{-\alpha_2}G\|_{\mathcal F^{\beta,\sigma}(\gamma(H;E))}^2  t^{1+2\sigma} s^{2(\beta-\sigma-1)} (t-s)^{2\sigma}.
\end{align*}
Therefore,
$$\mathbb E\|K_6\|^2 \leq  C \|A^{-\alpha_2}G\|_{\mathcal F^{\beta,\sigma}(\gamma(H;E))}^2  \max\{t^{1-2(\alpha_2+\kappa_2)},  t^{1+2\sigma}\}                   s^{2(\beta-\sigma-1)} (t-s)^{2\sigma}.$$

In this way, we conclude that 
\begin{align*}
\mathbb E\|A^{\kappa_2} W_G(t)-A^{\kappa_2} W_G(s)\|^2  \leq & 6\sum_{i=1}^6 \mathbb E\|K_i\|^2 \\
 \leq & m(t)^2 s^{2(\beta-\sigma-1)} (t-s)^{2\sigma},  
\end{align*}
where $m(\cdot)$ is some increasing function  defined on $(0,T]$ such that 
$$\lim_{t\to 0} m(t)=0.$$

{\bf Step 3}.  Let us  verify that  for any $  0<\gamma<\sigma$  and $0<\epsilon\leq T,$
$$A^{\kappa_2} W_G\in  \mathcal C^\gamma([\epsilon,T];E) \hspace{1cm}\text{ a.s.},$$
and 
$$\mathbb E \|A^{\kappa_2} W_G\|\in  \mathcal F^{\beta,\sigma}((0,T];\mathbb R). $$

By Theorem \ref{T9},   $A^{\kappa_2}  W_G$ is a Gaussian process on $(0,T]$. Thanks to the estimate in Step 2,  Theorem \ref{Kol2}  applied to  $A^{\kappa_2}  W_G$ provides that  
$$A^{\kappa_2}  W_G\in \mathcal C^\gamma([\epsilon,T];E) \hspace{1cm} \text{a.s.}$$

In order to  prove that $\mathbb E \|A^{\kappa_2} W_G\|\in  \mathcal F^{\beta,\sigma}((0,T];\mathbb R), $
we again use  the estimate in  Step 2. We have   
\begin{align*}
 [\mathbb E\|A^{\kappa_2} W_G(t)-A^{\kappa_2} W_G(s)\|]^2 &\leq \mathbb E\|A^{\kappa_2} W_G(t)-A^{\kappa_2} W_G(s)\|^2\\
 &\leq  m(t)^2 s^{2(\beta-\sigma-1)} (t-s)^{2\sigma}.\end{align*}
Then,
\begin{equation*} 
\frac{ s^{1-\beta+\sigma} \mathbb E\|A^{\kappa_2} W_G(t)-A^{\kappa_2} W_G(s)\| }{ (t-s)^{\sigma}} \leq m(t).
\end{equation*} 
This imlies that 
\begin{equation}  \label{TVT46}
\sup_{0\leq s<t\leq T} \frac{ s^{1-\beta+\sigma} |\mathbb E\|A^{\kappa_2} W_G(t)\|-\mathbb E\|A^{\kappa_2} W_G(s)\| |}{ (t-s)^{\sigma}}<\infty
\end{equation}
and 
\begin{equation}  \label{TVT47}
\lim_{t\to 0} \sup_{0\leq s<t} \frac{ s^{1-\beta+\sigma} |\mathbb E\|A^{\kappa_2} W_G(t)\|-\mathbb E\|A^{\kappa_2} W_G(s)\| |}{ (t-s)^{\sigma}}=0.
\end{equation}

On the other hand, repeating the argument  as in  \eqref{TVT44} and  \eqref{TVT45}, we have 
\begin{align*}
\mathbb E\|A^{\kappa_2} W_G(t)\|^2&\leq c(E) \int_0^t \|A^{\kappa_2} S(t-s) G(s)\|_{\gamma(H;E)}^2 ds\\
&\leq C  \|A^{-\alpha_2}G\|_{\mathcal F^{\beta,\sigma}(\gamma(H;E))}^2 \max\{t^{2(\beta-\alpha_2-\kappa_2)-1}, t^{2\beta-1}\}.
\end{align*}
Thereby,
\begin{align*}
t^{1-\beta} \mathbb E\|A^{\kappa_2} W_G(t)\|&\leq t^{1-\beta} \sqrt{\mathbb E\|A^{\kappa_2} W_G(t)\|^2} \\
&\leq C  \|A^{-\alpha_2}G\|_{\mathcal F^{\beta,\sigma}(\gamma(H;E))} \max\{ t^{\frac{1}{2}-\alpha_2-\kappa_2}, t^{\frac{1}{2}}\}. 
\end{align*}
Hence,
\begin{equation}  \label{TVT48}
\lim_{t\to 0} t^{1-\beta}  \mathbb E\|A^{\kappa_2} W_G(t)\|=0.
\end{equation}

By \eqref{TVT46}, \eqref{TVT47} and \eqref{TVT48},
we conclude that 
$$\mathbb E \|A^{\kappa_2} W_G\|\in  \mathcal F^{\beta,\sigma}((0,T];\mathbb R). $$

Thanks to Steps 1 and 3,  the proof of the theorem  is now complete.
\end{proof}

\begin{theorem}   \label{T3.5}
Let {\rm (A)} and {\rm (G)} be satisfied. Assume that  $\alpha_2<\frac{-1}{2}$. Then,
$$W_G \in \mathcal C((0,T];\mathcal D(A)) \hspace{1cm} \text{ a.s.}$$
and
$$W_G(t)=-\int_0^t AW_G(s)ds +\int_0^s G(s) dW(s) \hspace{1cm} \text{ a.s., } 0<t\leq T.$$
\end{theorem}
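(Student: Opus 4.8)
The plan is to read off the first assertion directly from Theorem~\ref{T3} and then to obtain the integral equation by pulling $A$ inside the stochastic convolution and exchanging the time integral with the stochastic integral.

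For the first assertion, since $\alpha_2<-\frac12$ we have $\frac12-\alpha_2>1$, so $\kappa_1=1$ is an admissible choice in Theorem~\ref{T3}; this gives at once
$$W_G\in\mathcal C((0,T];\mathcal D(A))\qquad\text{a.s.}$$
Moreover, since $A$ is closed and both $S(s-\cdot)G(\cdot)$ and $AS(s-\cdot)G(\cdot)$ lie in $\mathcal N^2([0,s])$ (the finiteness of the second being exactly the bound \eqref{TVT44}--\eqref{TVT45} with $\kappa_1=1$, where the hypothesis $\alpha_2<-\frac12$ is precisely what makes the Beta function $B(2\beta-1,-1-2\alpha_2)$ well-defined), Proposition~\ref{T10} yields
$$AW_G(s)=\int_0^s AS(s-r)G(r)\,dW(r),\qquad 0<s\le T.$$

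For the integral equation, I would compute $\int_0^t AW_G(s)\,ds$. Inserting the representation above and interchanging the $ds$-integration with the $dW$-integration by a stochastic Fubini argument (see \cite{van2}) gives
$$\int_0^t AW_G(s)\,ds=\int_0^t\Big(\int_r^t AS(s-r)\,ds\Big)G(r)\,dW(r).$$
The inner integral is elementary: with the substitution $u=s-r$ and $\frac{d}{du}S(u)=-AS(u)$ one obtains $\int_r^t AS(s-r)\,ds=\int_0^{t-r}AS(u)\,du=I-S(t-r)$. Hence
$$\int_0^t AW_G(s)\,ds=\int_0^t[I-S(t-r)]G(r)\,dW(r)=\int_0^t G(r)\,dW(r)-W_G(t),$$
which rearranges to the claimed identity $W_G(t)=-\int_0^t AW_G(s)\,ds+\int_0^t G(s)\,dW(s)$ a.s.

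The main obstacle is justifying the stochastic Fubini interchange. To this end I would verify integrability of the iterated integral by estimating $\big(\int_0^s\|AS(s-r)G(r)\|_{\gamma(H;E)}^2\,dr\big)^{1/2}\le C\,s^{\beta-\alpha_2-\frac32}$ when $-1\le\alpha_2<-\frac12$ and $\le C\,s^{\beta-\frac12}$ when $\alpha_2<-1$, exactly as in \eqref{TVT44} and \eqref{TVT45}; integration in $s$ over $(0,t]$ is then finite because $\alpha_2<-\frac12<\beta-\frac12$, which is the condition the Fubini hypotheses demand. The same bounds, together with \eqref{TVT43} for $\kappa_1=1$, also show that $\int_0^t AW_G(s)\,ds$ is a well-defined Bochner integral. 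An alternative that uses only tools already present in the excerpt is to run the Yosida-approximation scheme of Theorem~\ref{T1}, Step~3: establish the identity for the bounded generators $-A_n$, where it is immediate, and pass to the limit using \eqref{TVT17}, \eqref{TVT18} and the uniform estimate \eqref{TVT44}--\eqref{TVT45}.
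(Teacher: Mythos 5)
Your proposal is correct and follows essentially the same route as the paper: invoke Theorem \ref{T3} with $\kappa_1=1$ (admissible since $\alpha_2<-\tfrac12$ gives $1<\tfrac12-\alpha_2$) to get $W_G\in\mathcal C((0,T];\mathcal D(A))$ and the representation $AW_G(s)=\int_0^s AS(s-r)G(r)\,dW(r)$, then apply the stochastic Fubini theorem to $\int_0^t AW_G(s)\,ds$ and use $\int_0^{t-r}AS(u)\,du=I-S(t-r)$. The only difference is that you supply the integrability estimates justifying the Fubini interchange, which the paper passes over silently.
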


\begin{proof}
 Theorem  \ref{T3} for $\kappa_1=1$  provides that 
$$W_G \in \mathcal C((0,T];\mathcal D(A)) \hspace{1cm} \text{ a.s.}$$
and 
$$AW_G(t)= \int_0^t AS(t-s)G(s)dW(s), \hspace{1cm} 0<t\leq T.$$

The process $\int_0^\cdot G(s)dW(s)$ is also  well-defined and continuous on $[0,T]$ because 
\begin{align*}
&\int_0^t\|G(s)\|_{\gamma(H;E)}^2 ds\\
& \leq \int_0^t\|A^{\alpha_2}\|^2 \|\|A^{-\alpha_2} G(s)\|_{\gamma(H;E)}^2 ds\\
&\leq \|A^{\alpha_2}\|^2  \|A^{-\alpha_2}G\|_{\mathcal F^{\beta,\sigma}(\gamma(H;E))}^2  \int_0^ts^{2(\beta-1)} ds\\
&=  \|A^{\alpha_2}\|^2  \|A^{-\alpha_2}G\|_{\mathcal F^{\beta,\sigma}(\gamma(H;E))}^2 \frac{t^{2\beta-1}}{2\beta-1}<\infty, \hspace{1cm} 0\leq t\leq T.
\end{align*}

Using the Fubini theorem, we have
\begin{align*}
A \int_0^t W_G(s)ds&=\int_0^t \int_0^s AS(s-u) G(u)dW(u)ds\\
&=\int_0^t \int_u^t AS(s-u) G(u)dsdW(u)\\
&=\int_0^t [G(u)-S(t-u)G(u)]dW(u)\\
&=\int_0^t G(u)dW(u)-\int_0^t S(t-u)G(u)dW(u)\\
&=\int_0^t G(u)dW(u)-W_G(t),  \hspace{2cm}  0< t \leq T.
\end{align*}
Hence,
$$W_G(t)=-\int_0^t AW_G(s)ds +\int_0^s G(s) dW(s) \hspace{1cm} \text{ a.s., } 0<t\leq T.$$
The theorem has  been thus proved.
\end{proof}

We are now ready to state the  regularity for \eqref{TVT1}.
\begin{theorem} \label{T4}
 Let {\rm (A)},  {\rm (F2)} and {\rm (G)} be satisfied. Assume that $\mathbb E \xi<\infty$. 
\begin{itemize}
  \item [\rm{(i)} ]  
 Let $-\infty< \kappa \leq 1-\alpha_1 $ and $\kappa < \frac{1}{2}-\alpha_2. $ Then, there exists a unique  mild solution of \eqref{TVT1} possessing the  regularity:
$$X\in \mathcal C((0,T];\mathcal D(A^\kappa)) \hspace{1cm} \text{a.s.} $$
with the estimate
\begin{align} 
\mathbb E&\|A^\kappa X(t)\|              \label{TVT49}\\
\leq &
C  \mathbb E\|\xi\|  t^{-\kappa}+C   \|A^{-\alpha_1}F\|_{\mathcal F^{\beta,\sigma}(E)}t^{\beta-1} + C\|A^{-\alpha_2}G\|_{\mathcal F^{\beta,\sigma}(\gamma(H;E))}  \notag \\
& \times \max\{t^{\beta-\alpha_2-\kappa-\frac{1}{2}}, t^{\beta-\frac{1}{2}}\}, \hspace{2cm} 0<t\leq T.  \notag  
\end{align}
If $\alpha_1\leq 0$ and $\alpha_2 < \frac{-1}{2}$, then $X$ becomes a strict solution of  \eqref{TVT1}. 
 \item [\rm{(ii)} ]  Assume that  $\xi\in\mathcal D(A^{\beta-\alpha_1}) $ a.s.  Let  $-\infty <\kappa  \leq  \min\{\beta-\alpha_1, \beta-\alpha_2-\frac{1}{2}\}$ and $\kappa < \frac{1}{2}-\alpha_2.$ Then,
$$X\in \mathcal C([0,T];\mathcal D(A^\kappa))\hspace{1cm} \text{a.s.} $$
with the estimate 
\begin{align*} 
&\mathbb E\|A^\kappa X(t)\|  \\
 \leq  & C[\mathbb E\|A^{\beta-\alpha_1} \xi\|+\|A^{-\alpha_1} F\|_{\mathcal F^{\beta,\sigma}(E)}  \notag\\
&+ \|A^{-\alpha_2}G\|_{\mathcal F^{\beta,\sigma}(\gamma(H;E))}      \max\{t^{\beta-\alpha_2-\kappa-\frac{1}{2}}, t^{\beta-\frac{1}{2}}\}], \hspace{0.5cm} 0< t\leq T.   \notag
\end{align*}
Furthermore, if $\kappa< \min\{\frac{1}{2}-\sigma-\alpha_2,1\}, $  then
   for any  $0<\epsilon\leq T$ and $0<\gamma <\sigma,$
$$ A^\kappa X\in \mathcal C^\gamma ([\epsilon,T];E)\hspace{1cm}\text{ a.s.}$$
and 
\begin{equation}  \label{TVT50}
\begin{cases}
\mathbb E \|A^\kappa  X\|\in  \mathcal F^{\beta,\sigma}((0,T];\mathbb R), \\
\mathbb E A^\kappa X, \frac{d\mathbb EA^\kappa X}{dt} \in   \mathcal F^{\beta,\sigma}((0,T];E).
\end{cases}
\end{equation}
 \end{itemize}
\end{theorem}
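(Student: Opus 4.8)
The plan is to split the mild solution of \eqref{TVT1} into its deterministic and stochastic parts and transfer the regularity already proved in Theorems \ref{T1}, \ref{T2}, \ref{T3} and \ref{T3.5}. Writing $X = X_{\mathrm{d}} + W_G$, where $X_{\mathrm{d}}(t) = S(t)\xi + \int_0^t S(t-s)F(s)\,ds$ is the deterministic part (treated pathwise, with $\xi(\omega)$ frozen) and $W_G$ is the stochastic convolution, I would first establish the spatial regularity in part (i). Since $\kappa \le 1-\alpha_1$, Theorem \ref{T1} gives $X_{\mathrm{d}} \in \mathcal C((0,T];\mathcal D(A^\kappa))$ pathwise, while $\kappa < \tfrac12 - \alpha_2$ lets Theorem \ref{T3} (with $\kappa_1 = \kappa$) yield $W_G \in \mathcal C((0,T];\mathcal D(A^\kappa))$ a.s.; their sum gives the claimed regularity. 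For the estimate \eqref{TVT49} I would bound $\mathbb E\|A^\kappa X(t)\| \le \mathbb E\|A^\kappa X_{\mathrm{d}}(t)\| + \mathbb E\|A^\kappa W_G(t)\|$, controlling the first summand directly from \eqref{TVT7} and \eqref{TVT6} (the term $A^\kappa S(t)\xi$ produces $C\,\mathbb E\|\xi\|\,t^{-\kappa}$, and the forcing convolution produces $C\|A^{-\alpha_1}F\|_{\mathcal F^{\beta,\sigma}(E)}\,t^{\beta-1}$ after noting $t^{\beta-\kappa-\alpha_1} \le C\,t^{\beta-1}$ on $(0,T]$), and the second directly from \eqref{TVT43}.

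For the strict-solution claim in part (i) I would combine two integral identities. Under $\alpha_1 \le 0$, Theorem \ref{T1} says $X_{\mathrm{d}}$ is a strict solution of \eqref{TVT11}, so $X_{\mathrm{d}}(t) = \xi - \int_0^t A X_{\mathrm{d}}(s)\,ds + \int_0^t F(s)\,ds$. Under $\alpha_2 < -\tfrac12$, Theorem \ref{T3.5} gives $W_G \in \mathcal C((0,T];\mathcal D(A))$ a.s.\ together with $W_G(t) = -\int_0^t A W_G(s)\,ds + \int_0^t G(s)\,dW(s)$. Adding the two identities, and using closedness of $A$ to write $\int_0^t A X(s)\,ds = \int_0^t A X_{\mathrm{d}}(s)\,ds + \int_0^t A W_G(s)\,ds$, shows $X$ satisfies Definition \ref{Def2}; predictability and the a.s.\ finiteness of $\int_0^t AX(s)\,ds$ are inherited from the two parts. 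Uniqueness follows since any strict solution is a mild solution (the Remark after Definition \ref{Def1}) and the mild solution is uniquely determined by its explicit representation.

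For part (ii) I would again argue termwise. With $\xi \in \mathcal D(A^{\beta-\alpha_1})$ a.s., Theorem \ref{T2} gives $X_{\mathrm{d}} \in \mathcal C([0,T];\mathcal D(A^{\beta-\alpha_1})) \subseteq \mathcal C([0,T];\mathcal D(A^\kappa))$ because $\kappa \le \beta-\alpha_1$, and the bound \eqref{TVT26} controls $\mathbb E\|A^\kappa X_{\mathrm{d}}(t)\|$ by $C[\mathbb E\|A^{\beta-\alpha_1}\xi\| + \|A^{-\alpha_1}F\|_{\mathcal F^{\beta,\sigma}(E)}]$. Since $\kappa \le \beta-\alpha_2-\tfrac12$, the last assertion of Theorem \ref{T3} gives $W_G \in \mathcal C([0,T];\mathcal D(A^\kappa))$ a.s., and \eqref{TVT43} supplies the rest of the estimate. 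When in addition $\kappa < \min\{\tfrac12-\sigma-\alpha_2,1\}$, Theorem \ref{T3} (now reading $\kappa_2 = \kappa$) delivers $A^\kappa W_G \in \mathcal C^\gamma([\epsilon,T];E)$ a.s.\ for $0<\gamma<\sigma$ and $\mathbb E\|A^\kappa W_G\| \in \mathcal F^{\beta,\sigma}((0,T];\mathbb R)$. The key simplification for the mean statements is that $W_G$ is a mean-zero Gaussian martingale (Theorem \ref{T9}), so $\mathbb E\,A^\kappa W_G \equiv 0$ and hence $\mathbb E\,A^\kappa X = A^\kappa \bar X$, where $\bar X(t) = S(t)\,\mathbb E\xi + \int_0^t S(t-s)F(s)\,ds$ is the deterministic mild solution with initial value $\mathbb E\xi$; note $\mathbb E\xi \in \mathcal D(A^{\beta-\alpha_1})$ by closedness, since $\mathbb E\|A^{\beta-\alpha_1}\xi\|<\infty$.

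The main work, and the place where care is needed, is transferring the $\mathcal F^{\beta,\sigma}$ conclusions of Theorem \ref{T2} for $\bar X$ from the exponent $1-\alpha_1$ down to the exponent $\kappa$. Here I would use the elementary fact that $\mathcal F^{\beta,\sigma}((0,T];E)$ is stable under left multiplication by any $B \in L(E)$, with $\|Bf\|_{\mathcal F^{\beta,\sigma}(E)} \le \|B\|_{L(E)}\|f\|_{\mathcal F^{\beta,\sigma}(E)}$, immediate from the defining norm. Writing $A^\kappa \bar X = A^{\kappa-(1-\alpha_1)}\bigl(A^{1-\alpha_1}\bar X\bigr)$ with $A^{\kappa-1+\alpha_1} \in L(E)$ (valid since $\kappa \le \beta-\alpha_1 \le 1-\alpha_1$) and invoking $A^{1-\alpha_1}\bar X \in \mathcal F^{\beta,\sigma}((0,T];E)$ from Theorem \ref{T2} gives $\mathbb E\,A^\kappa X = A^\kappa\bar X \in \mathcal F^{\beta,\sigma}((0,T];E)$. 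For the derivative, differentiating the strict-solution identity for $\bar X$ yields $\tfrac{d}{dt}\,\mathbb E\,A^\kappa X = A^\kappa F - A^{\kappa+1}\bar X = A^{\kappa+\alpha_1}\bigl(A^{-\alpha_1}F - A^{1-\alpha_1}\bar X\bigr)$, and the bracket equals $A^{-\alpha_1}\tfrac{d\bar X}{dt} \in \mathcal F^{\beta,\sigma}((0,T];E)$ by Theorem \ref{T2}; a further application of the bounded-multiplier fact, with $A^{\kappa+\alpha_1} \in L(E)$ in the relevant range of exponents, places the derivative in $\mathcal F^{\beta,\sigma}((0,T];E)$ as well. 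The only genuinely delicate points are keeping the several exponent inequalities consistent so that each fractional power used as a multiplier is bounded, and confirming predictability together with the a.s.\ finiteness conditions of Definition \ref{Def2} for the strict-solution assertion.
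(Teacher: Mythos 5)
Your overall strategy --- splitting $X=X_{\mathrm d}+W_G$ and importing the conclusions of Theorems \ref{T1}, \ref{T2}, \ref{T3} and \ref{T3.5} termwise --- is exactly the paper's proof, and most of your steps (the a.s.\ regularity, adding the two integral identities for the strict solution, and using $\mathbb E A^\kappa W_G\equiv 0$ to reduce the mean statements to the deterministic theory) coincide with what the author does. Two steps, however, do not go through as you describe them. First, for the forcing term in \eqref{TVT49} you bound $\int_0^t\|A^{\kappa+\alpha_1}S(t-s)\|\,\|A^{-\alpha_1}F(s)\|\,ds$ by a Beta integral, producing $t^{\beta-\kappa-\alpha_1}$; but part (i) allows the endpoint $\kappa=1-\alpha_1$, where this integral is $\int_0^t(t-s)^{-1}s^{\beta-1}\,ds=+\infty$. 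The paper avoids this by writing $A^\kappa S(t-s)F(s)=A^{\kappa+\alpha_1-1}\bigl[AS(t-s)\bigl(A^{-\alpha_1}F(s)-A^{-\alpha_1}F(t)\bigr)\bigr]$ together with the term $[I-S(t)]A^{-\alpha_1}F(t)$, so that the H\"older weight $(t-s)^{\sigma-1}s^{\beta-\sigma-1}$ from \eqref{TVT6} makes the integral converge (this is exactly \eqref{TVT14}); you need that decomposition, not the naive one, both to cover $\kappa=1-\alpha_1$ and to obtain the exponent $t^{\beta-1}$ claimed in \eqref{TVT49}.

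Second, your treatment of the derivative in \eqref{TVT50} hinges on ``$A^{\kappa+\alpha_1}\in L(E)$ in the relevant range of exponents,'' i.e.\ on $\kappa\le-\alpha_1$. The hypotheses only give $\kappa\le\beta-\alpha_1$ (plus the constraints involving $\alpha_2$), so $\kappa+\alpha_1$ can be as large as $\beta>0$, in which case $A^{\kappa+\alpha_1}$ is unbounded and your factorization $\tfrac{d}{dt}\,\mathbb E A^\kappa X=A^{\kappa+\alpha_1}\bigl(A^{-\alpha_1}\tfrac{d\bar X}{dt}\bigr)$ does not place the derivative in $\mathcal F^{\beta,\sigma}((0,T];E)$. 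Be aware that the paper's own proof does not close this point either: it only derives $A^{-1}\tfrac{d}{dt}\,\mathbb E A^\kappa X\in\mathcal F^{\beta,\sigma}((0,T];E)$, which is weaker than the display in \eqref{TVT50}. So your difficulty here reflects a genuine imprecision in the statement, but as written your justification is an incorrect step rather than a complete argument; you would need either to restrict to $\kappa\le-\alpha_1$ or to prove (and assert) the $A^{-1}$-smoothed version as the paper does.
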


\begin{proof}
Theorems \ref{T1} and   \ref{T3} provide that \eqref{TVT1} has a unique mild solution in the space 
$$X\in \mathcal C((0,T];\mathcal D(A^\kappa)) \hspace{1cm} \text{a.s.} $$
In addition, if $\alpha_1\leq 0$ and $\alpha_2 < \frac{-1}{2}$, then by Theorems \ref{T1} and   \ref{T3.5},  $X$ becomes a strict solution.  

For Part {\rm (i)},  it now suffices to prove \eqref{TVT49}. 
Using \eqref{TVT14} and \eqref{TVT43}, we have 
\begin{align*} 
 \mathbb E\|A^\kappa X(t)\|       
= & \mathbb E\Big\|A^\kappa S(t)\xi+ \int_0^tA^\kappa S(t-s) F(s)ds+ A^\kappa W_G(t)\Big\|       \notag\\
\leq &   \mathbb E\|A^\kappa S(t)\xi\| + \int_0^t\|AS(t-s) [A^{-\alpha_1}F(s)-A^{-\alpha_1}F(t)]\|ds         \notag\\
&+ \| [I-S(t)]A^{-\alpha_1}F(t)\|  +\mathbb E\|A^\kappa W_G(t)\|     \notag\\
\leq &  \|A^\kappa S(t)\|\mathbb E \|\xi\| +\iota_1  B(\beta-\sigma,\sigma)\|A^{-\alpha_1}F\|_{\mathcal F^{\beta,\sigma}(E)} t^{\beta-1}       \notag\\
& + \| I-S(t)\| \|A^{-\alpha_1}F(t)\|  
+C\|A^{-\alpha_2}G\|_{\mathcal F^{\beta,\sigma}(\gamma(H;E))}  \notag\\
& \times \max\{t^{\beta-\alpha_2-\kappa-\frac{1}{2}}, t^{\beta-\frac{1}{2}}\}, \hspace{2cm} 0<t\leq T.        \notag
\end{align*}
Then, \eqref{TVT6},  \eqref{TVT7} and  \eqref{TVT9} gives 
\begin{align*} 
 \mathbb E & \|A^\kappa X(t)\|   \\
\leq & \iota_\kappa  \mathbb E\|\xi\|  t^{-\kappa}+[1+\iota_0+\iota_1  B(\beta-\sigma,\sigma)]   \|A^{-\alpha_1}F\|_{\mathcal F^{\beta,\sigma}(E)}t^{\beta-1}  \notag \\
&+ C\|A^{-\alpha_2}G\|_{\mathcal F^{\beta,\sigma}(\gamma(H;E))}  \max\{t^{\beta-\alpha_2-\kappa-\frac{1}{2}}, t^{\beta-\frac{1}{2}}\},\hspace{2cm} 0<t\leq T.
\end{align*}
 Thus, \eqref{TVT49} has been verified.

It is easily  seen that Part {\rm (ii)}  (except \eqref{TVT50}) 
follows from Theorems \ref{T2} and   \ref{T3} and a note that for any $0<\epsilon\leq T$ and $0<\gamma <\sigma$, 
$$\mathcal F^{\beta,\sigma}((0,T];E) \subset  \mathcal C^\gamma ([\epsilon,T];E).$$

Let us finally prove \eqref{TVT50}. We have
$$A^\kappa X=A^\kappa X_1 + A^\kappa W_G,$$
where
$$X_1=S(t)\xi+ \int_0^t S(t-s)F(s) ds.$$
In the proofs for  Theorems \ref{T2} (see Step 2) and   \ref{T3} (see \eqref{TVT48}), we already show that 
$$\lim_{t\to 0} t^{1-\beta} A^{1-\alpha_1} X_1(t)=0,$$
and
$$\lim_{t\to 0} t^{1-\beta} \mathbb E\|A^\kappa W_G(t)\|=0.$$
Since $\kappa \leq 1-\alpha_1$, we obtain that 
$$\lim_{t\to 0} t^{1-\beta} \mathbb E\|A^\kappa X(t)\|=0.$$
This means that $\mathbb E\|A^\kappa X\|$ satisfies \eqref{TVT3}.

On the other hand, by Theorem  \ref{T2},
\begin{equation}  \label{TVT51}
A^{1-\alpha_1} X_1 \in \mathcal F^{\beta,\sigma} ((0,T];E) \hspace{1cm} \text{ a.s.}
\end{equation}
Hence, it is easily seen that 
$$\mathbb E\|A^\kappa X_1\| \in \mathcal F^{\beta,\sigma} ((0,T];\mathbb R).$$
In addition, Theorem  \ref{T3} provides that 
$$\mathbb E\|A^\kappa W_G\| \in \mathcal F^{\beta,\sigma} ((0,T];\mathbb R).$$

Using the inequality:
\begin{align*}
& |\mathbb E \|A^\kappa X(t)\|-\mathbb E \|A^\kappa X(s)\| | \\
 \leq & 
|\mathbb E \|A^\kappa X_1(t)\|-\mathbb E \|A^\kappa X_1 (s)\| |   \\
&  +
|\mathbb E \|A^\kappa W_G(t)\|-\mathbb E \|A^\kappa W_G(s)\| |, \hspace{1cm} 0\leq t,s\leq T,
\end{align*}
it is easily seen that $\mathbb E\|A^\kappa X\|$ satisfies \eqref{TVT4} and \eqref{TVT5}.

In this way, we obtain that
$$\mathbb E\|A^\kappa X\| \in \mathcal F^{\beta,\sigma} ((0,T];\mathbb R).$$

 We now have
$$\mathbb E  A^\kappa X(t)=\mathbb E  A^\kappa X_1(t) =A^{\kappa+\alpha_1-1} \mathbb E  A^{1-\alpha_1} X_1(t).$$
Since $\kappa \leq 1-\alpha_1$, \eqref{TVT51} gives 
$$\mathbb E  A^\kappa X  \in \mathcal F^{\beta,\sigma} ((0,T];E).$$
In addition, since
\begin{align*}
\frac{d \mathbb E  A^\kappa X }{dt}&=\frac{d}{dt} [S(t)\mathbb E  A^\kappa \xi+\int_0^t  A^\kappa S(t-s) F(s)ds ]\\
&= -A \mathbb E  A^\kappa  X +A^\kappa F(t),
\end{align*}
we arrive at
$$A^{-1}\frac{d \mathbb E  A^\kappa X }{dt} \in  \mathcal F^{\beta,\sigma} ((0,T];E).$$
The proof is now complete.
\end{proof}

The following corollary is a direct consequence of Theorem \ref{T4}.
\begin{corollary}  \label{cor1}
Let {\rm (A)},  {\rm (F2)} and {\rm (G)} be satisfied. Assume that $\alpha_1\leq 0, \alpha_2 < \alpha_1 -\frac{1}{2}$,  and  $ \xi\in\mathcal D(A^{\beta-\alpha_1}) $ a.s.   Then,  \eqref{TVT1} possesses a unique strict solution with the  regularity: 
\begin{align*}
  X \in \mathcal C([0,T];\mathcal D(A^{\beta-\alpha_1})), \quad AX\in \mathcal C ((0,T];E)\hspace{1cm}\text{ a.s.} 
\end{align*}
\end{corollary}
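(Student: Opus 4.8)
The plan is to obtain the corollary as a direct specialization of Theorem~\ref{T4}, choosing the exponent $\kappa$ in two different ways; no fresh estimate is required, so the work consists entirely of checking that the numerical side conditions on $\kappa$ in the two parts of that theorem hold under the present hypotheses. First I would record what the standing assumptions force on the exponents: condition (F2) gives $\tfrac12<\beta\leq 1$, and combining $\alpha_1\leq 0$ with $\alpha_2<\alpha_1-\tfrac12$ yields $\alpha_2<-\tfrac12$, which is the decisive sign information.

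For existence, uniqueness, and the interior regularity $AX\in\mathcal C((0,T];E)$ a.s., I would apply Theorem~\ref{T4}(i) with $\kappa=1$. Its admissibility requirements $\kappa\leq 1-\alpha_1$ and $\kappa<\tfrac12-\alpha_2$ read $\alpha_1\leq 0$ and $\alpha_2<-\tfrac12$, both already in hand, so there is a unique mild solution with $X\in\mathcal C((0,T];\mathcal D(A))$ a.s., i.e.\ $AX\in\mathcal C((0,T];E)$ a.s. Because $\alpha_1\leq 0$ and $\alpha_2<-\tfrac12$, the closing clause of part (i) promotes this mild solution to a strict one; uniqueness as a strict solution then follows from uniqueness as a mild solution together with the fact that every strict solution is mild.

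For the boundary regularity $X\in\mathcal C([0,T];\mathcal D(A^{\beta-\alpha_1}))$ a.s., I would apply Theorem~\ref{T4}(ii) with $\kappa=\beta-\alpha_1$. Here two inequalities must be checked. The condition $\kappa\leq\min\{\beta-\alpha_1,\beta-\alpha_2-\tfrac12\}$ reduces to $\beta-\alpha_1\leq\beta-\alpha_2-\tfrac12$, i.e.\ $\alpha_2+\tfrac12\leq\alpha_1$, which is implied by $\alpha_2<\alpha_1-\tfrac12$; the condition $\kappa<\tfrac12-\alpha_2$ rearranges to $\beta+\alpha_2-\alpha_1<\tfrac12$, and since $\alpha_2-\alpha_1<-\tfrac12$ and $\beta\leq 1$ the left-hand side is bounded by $\beta-\tfrac12\leq\tfrac12$. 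Both hold, so part (ii) delivers the continuity of $X$ into $\mathcal D(A^{\beta-\alpha_1})$ on the closed interval, completing the statement when combined with the previous paragraph.

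Since everything is bookkeeping, the main point of care --- rather than a genuine obstacle --- is seeing that the single strict inequality $\alpha_2<\alpha_1-\tfrac12$ does double duty: it places $\kappa=\beta-\alpha_1$ inside the admissible window of part (ii) and, via $\alpha_1\leq 0$, simultaneously forces $\alpha_2<-\tfrac12$, the hypothesis that triggers the strict-solution conclusion of part (i). One technical caveat I would flag is that Theorem~\ref{T4} is stated under the integrability hypothesis $\mathbb E\xi<\infty$, which is not made explicit in the corollary; I would either read it as implicit in the initial datum or remove it by the usual localization over $\Omega$, so that the almost-sure conclusions are unaffected.
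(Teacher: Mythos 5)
Your proposal is correct and matches the paper's intent exactly: the paper gives no separate proof, stating only that the corollary is a direct consequence of Theorem \ref{T4}, and your instantiation with $\kappa=1$ in part (i) and $\kappa=\beta-\alpha_1$ in part (ii), together with the verification of the exponent inequalities, is precisely that deduction. Your remark about the hypothesis $\mathbb E\|\xi\|<\infty$ in Theorem \ref{T4} not being restated in the corollary is a fair observation about the paper itself, and your proposed handling of it is reasonable.
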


\section{An application to heat equations}\label{section5}
Consider the following nonlinear stochastic heat equation:
\begin{equation}  \label{TVT53}
\begin{cases}
\frac{\partial u}{\partial t}=\Delta u-a(x) u+b(t,x)+\sigma(t,x)\frac{\partial W}{\partial t} (t,x), \hspace{0.1cm} 0< t \leq T, x\in \mathbb R^d,\\
u(0,x)=u_0(x),  \hspace{1cm}  x\in \mathbb R^d,
\end{cases}
\end{equation} 
where 
\begin{itemize}
  \item  $\Delta=\sum_{i=1}^d \frac{\partial^2}{\partial x_i^2}$ is the Laplace operator.
  \item $a(\cdot)$, $u_0(\cdot)$, and $b(\cdot,\cdot), \sigma(\cdot,\cdot)$ are real-valued functions in $\mathbb R^d $ and in $[0,T] \times \mathbb R^d$, respectively.
  \item   $\frac{\partial W (\cdot,\cdot)}{\partial t} $ is a space-time white noise with intensity $\sigma(t,x)$ at $(t,x)$. 
\end{itemize}

Let us first make precise what we mean by $ \frac{\partial W}{\partial t} (t,x)$. Since the process $W(t,x)$ depends on both position  $x$ and time $t$, it is often chosen of the form
\begin{equation*} 
W(t,x)=\sum_{j=1}^\infty e_j(x) B_j(t),
\end{equation*}
where $\{e_j\}_{j=1}^\infty$ is an orthonormal and complete basis of some Hilbert space, say  $H_0$, and $\{B_j\}_{j=1}^\infty$ is a family of independent real-valued standard Wiener processes on a filtered, complete probability space $(\Omega, \mathcal F,\{\mathcal F_t\}_{t\geq 0}, \mathbb P).$

It is known that (see, e.g., \cite{prato}) the series $\sum_{j=1}^\infty e_j B_j(t)$ converges to 
 a cylindrical Wiener process on a separable Hilbert space $H\supseteq H_0$. (The embedding of $H_0$  into $H$ is a Hilbert-Schmidt operator.)  We still denote the cylindrical Wiener process by $\{W(t), t\in [0,T]\}.$  
The noise term $\sigma(t,x)\frac{\partial W}{\partial t} (t,x)$ in \eqref{TVT53} is therefore  considered as $G(t) \frac{\partial W}{\partial t} (t)$,  where
$W$ is the cylindrical Wiener process on  $H$ and $G(t), 0\leq t \leq T,  $ are  linear operators from  $H$ to some Banach space.

We now want to consider \eqref{TVT53} in the Hilbert space $(E,\|\cdot\|)=(H^{-1}(\mathbb R^d),\|\cdot\|_{H^{-1}(\mathbb R^d)}).$  Clearly,  $E$ is a UMD Banach space of type 2 (see Remark \ref{rm0}). We assume that
\begin{itemize}
    \item  The function $F$ defined by $F(t)=b(t,\cdot)$   is an $H^{-1}(\mathbb R^d)$\,{-}\,valued measurable function on $[0,T]$.
    \item Operators $G(t), 0\leq t\leq T,$ are Hilbert-Schmidt operators from $H$ to $H^{-1}(\mathbb R^d)$ (see Remark \ref{rm1}). In addition, $G\colon [0,T] \to L_2(H; H^{-1}(\mathbb R^d))$ is $H$\,{-}\,strongly measurable and $G\in L^2((0,T); L_2(H; H^{-1}(\mathbb R^d))).$
    \item $a(\cdot) \in L^\infty (\mathbb R^d) $ with $\inf_{x\in \mathbb R^d} a(x)>0$.
\end{itemize}

Let $A$ be a realization of the differential operator 
$-\Delta +a(x)$ in $H^{-1}(\mathbb R^d).$ Thanks to \cite[Theorem 2.2]{yagi},  $A$ is a sectorial operator on $H^{-1}(\mathbb R^n)$   with domain
$\mathcal D(A)=H^1(\mathbb R^n).$
 As a consequence, $(-A)$ generates an analytical semigroup on $H^{-1}(\mathbb R^n)$.

Using $A, F$ and $G$, the equation \eqref{TVT53} is formulated as a problem of the form \eqref{TVT1} in $H^{-1}(\mathbb R^d). $  Consider separately the deterministic case and the stochastic case.

{\bf Case 1}. $\sigma(\cdot,\cdot)\equiv 0.$

Theorem \ref{T2} is available for the heat equation \eqref{TVT53} in this  case. We then obtain the maximal regularity for \eqref{TVT53}.

\begin{theorem}   \label{T5}
Assume that $F$ satisfies the condition  {\rm (F1)} with $E=H^{-1}(\mathbb R^d)$.
 Let  $u_0\in \mathcal D(A^{\beta-\alpha_1}).$  Then, \eqref{TVT53} possesses  a unique  mild solution  in the spaces: 
$$u\in \mathcal C((0,T];\mathcal D(A^{1-\alpha_1})) \cap \mathcal C([0,T];\mathcal D(A^{\beta-\alpha_1})),$$
and 
$$A^{1-\alpha_1} u\in \mathcal F^{\beta,\sigma}((0,T];H^{-1}(\mathbb R^d)). $$
  In addition, 
$u$ satisfies the estimate:
\begin{align*}
& \|A^{\beta-\alpha_1}u\|_{\mathcal C} +\|A^{1-\alpha_1} u\|_{\mathcal F^{\beta,\sigma}(H^{-1}(\mathbb R^d))} \\
& \leq  C[\|A^{\beta-\alpha_1} u_0\|       +\|A^{-\alpha_1} F\|_{\mathcal F^{\beta,\sigma}(H^{-1}(\mathbb R^d))}].   
\end{align*}
Furthermore,
when $  \alpha_1 \leq 0,$   $u$ becomes a strict solution of \eqref{TVT11} possessing the regularity: 
$$u\in \mathcal C^1((0,T];H^{-1}(\mathbb R^d))$$ 
 and 
$$A^{-\alpha_1}\frac{du}{dt}\in \mathcal F^{\beta,\sigma}((0,T];H^{-1}(\mathbb R^d))  $$
with  the estimate:  
\begin{equation*} 
\Big\|A^{-\alpha_1}\frac{du}{dt}\Big\|_{\mathcal F^{\beta,\sigma}(H^{-1}(\mathbb R^d))} \leq C [\|A^{\beta-\alpha_1} u_0\| + \|A^{-\alpha_1} F\|_{\mathcal F^{\beta,\sigma}(H^{-1}(\mathbb R^d))}]. 
\end{equation*}
Here,   $C$ is some positive constant depending only on the exponents.
\end{theorem}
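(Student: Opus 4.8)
The plan is to recognize that the deterministic heat problem \eqref{TVT53} with $\sigma \equiv 0$ is, after the reformulation already prepared in the text, \emph{exactly} an instance of the abstract Cauchy problem \eqref{TVT11} in the Banach space $E = H^{-1}(\mathbb{R}^d)$. Setting $\xi = u_0$, taking $A$ to be the realization of $-\Delta + a(x)$ in $H^{-1}(\mathbb{R}^d)$, and letting $F(t) = b(t,\cdot)$, the identity $\frac{du}{dt} + Au = F(t)$ with $u(0) = u_0$ coincides with \eqref{TVT11} (note that $\Delta u - a(x)u = -Au$). The whole theorem will then follow by verifying the two structural hypotheses {\rm (A)} and {\rm (F1)} and quoting Theorem \ref{T2} verbatim, rereading its conclusions with $E = H^{-1}(\mathbb{R}^d)$ and $\xi = u_0$.

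First I would check hypothesis {\rm (A)}. This is precisely the content of \cite[Theorem 2.2]{yagi}, which asserts that $A$ is a sectorial operator on $H^{-1}(\mathbb{R}^d)$ with $\mathcal{D}(A) = H^1(\mathbb{R}^d)$; the standing assumptions $a \in L^\infty(\mathbb{R}^d)$ and $\inf_{x} a(x) > 0$ are exactly what render the operator sectorial in the required sense. Hence $(-A)$ generates an analytic semigroup and all the semigroup estimates \eqref{TVT7}--\eqref{TVT10} are at our disposal. Next, hypothesis {\rm (F1)} is assumed directly in the statement, and the membership $u_0 \in \mathcal{D}(A^{\beta-\alpha_1})$ is likewise assumed. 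Thus every input of Theorem \ref{T2} is in place. (The UMD and type-2 properties of $E$, though valid here by Remark \ref{rm0}, are not even needed in the deterministic case.)

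With the hypotheses verified, I would simply invoke Theorem \ref{T2}: its conclusion furnishes a unique mild solution $X$ lying in $\mathcal{C}((0,T]; \mathcal{D}(A^{1-\alpha_1})) \cap \mathcal{C}([0,T]; \mathcal{D}(A^{\beta-\alpha_1}))$ with $A^{1-\alpha_1}X \in \mathcal{F}^{\beta,\sigma}((0,T];E)$, the bound \eqref{TVT24}, and---when $\alpha_1 \leq 0$---the strict-solution regularity $X \in \mathcal{C}^1((0,T];E)$ together with $A^{-\alpha_1}\frac{dX}{dt} \in \mathcal{F}^{\beta,\sigma}((0,T];E)$ and the estimate \eqref{TVT25}. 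Writing $u = X$ and $E = H^{-1}(\mathbb{R}^d)$ reproduces every assertion of the theorem word for word.

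The proof carries essentially no genuine obstacle: the real analytic content resides in Theorem \ref{T2}, and here one is only performing a translation of the PDE into the abstract language. The single point requiring care is the sectoriality of $A$ on $H^{-1}(\mathbb{R}^d)$---the negative-order Sobolev space is the nonstandard choice, since one more commonly works in $L^2$ or in $H^s$ with $s \geq 0$, so the appeal to \cite[Theorem 2.2]{yagi} must be confirmed to deliver precisely estimate {\rm (A)} on this particular space with the stated domain $H^1(\mathbb{R}^d)$. Once that citation is in hand, the remainder of the argument is immediate.
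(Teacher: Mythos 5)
Your proposal matches the paper exactly: the paper likewise reformulates \eqref{TVT53} with $\sigma\equiv 0$ as the abstract problem \eqref{TVT11} in $H^{-1}(\mathbb R^d)$, obtains sectoriality of $A=-\Delta+a(x)$ from \cite[Theorem 2.2]{yagi}, and then simply invokes Theorem \ref{T2}. No further comment is needed.
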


{\bf Case 2}. $\sigma(\cdot,\cdot)\not\equiv 0.$ 

 Theorem \ref{T4}  is available  to the heat equation \eqref{TVT53} in this  case. The following theorem shows existence of mild and strict solutions as well as their space-time regularity to  \eqref{TVT53}.

\begin{theorem} \label{T6}
Assume that $F$ and $G$  satisfy the conditions   {\rm (F2)} and {\rm (G)}  with $E=H^{-1}(\mathbb R^d)$.
\begin{itemize}
  \item [\rm{(i)} ]  
 Let $\kappa \leq 1-\alpha_1 $ and $\kappa < \frac{1}{2}-\alpha_2. $ Then,   \eqref{TVT53} possesses a unique  mild solution  in the space:
$$u\in \mathcal C((0,T];\mathcal D(A^\kappa)) \hspace{1cm} \text{a.s.} $$
with the estimate
\begin{align*} 
\mathbb E&\|A^\kappa u(t)\|              \\
\leq &
C  \|u_0\|   t^{-\kappa}+C   \|A^{-\alpha_1}F\|_{\mathcal F^{\beta,\sigma}(E)}t^{\beta-1} + C\|A^{-\alpha_2}G\|_{\mathcal F^{\beta,\sigma}(L_2(H;H^{-1}(\mathbb R^d)))}  \notag \\
& \times \max\{t^{\beta-\alpha_2-\kappa-\frac{1}{2}}, t^{\beta-\frac{1}{2}}\}, \hspace{2cm} 0<t\leq T.  \notag  
\end{align*}
If $\alpha_1\leq 0$ and $\alpha_2 < \alpha_1-\frac{1}{2}$, then $u$ becomes a strict solution of   \eqref{TVT53}. 
 \item [\rm{(ii)} ] Take  $u_0\in\mathcal D(A^{\beta-\alpha_1}). $  Let  $-\infty <\kappa  \leq  \min\{\beta-\alpha_1, \beta-\alpha_2-\frac{1}{2}\}$ and $\kappa < \frac{1}{2}-\alpha_2.$ Then,
$$u\in \mathcal C([0,T];\mathcal D(A^\kappa))\hspace{1cm} \text{a.s.} $$
with the estimate 
\begin{align*} 
&\mathbb E\|A^\kappa u(t)\|    \\
 \leq  & C[\|A^{\beta-\alpha_1} u_0\|+\|A^{-\alpha_1} F\|_{\mathcal F^{\beta,\sigma}(H^{-1}(\mathbb R^d))}  \notag\\
&+ \|A^{-\alpha_2}G\|_{\mathcal F^{\beta,\sigma}(L_2(H;H^{-1}(\mathbb R^d)))}      \max\{t^{\beta-\alpha_2-\kappa-\frac{1}{2}}, t^{\beta-\frac{1}{2}}\}], \hspace{0.5cm} 0< t\leq T.   \notag
\end{align*}
Furthermore, if $\kappa< \min\{\frac{1}{2}-\sigma-\alpha_2,1\},$  then
   for any  $0<\epsilon\leq T$ and $0<\gamma <\sigma,$
$$ A^\kappa u\in \mathcal C^\gamma ([\epsilon,T];H^{-1}(\mathbb R^d))\hspace{1cm}\text{ a.s.}$$
and 
\begin{equation*}  
\begin{cases}
\mathbb E \|A^\kappa  u\|\in  \mathcal F^{\beta,\sigma}((0,T];\mathbb R), \\
\mathbb E A^\kappa u, \frac{d\mathbb EA^\kappa u}{dt} \in   \mathcal F^{\beta,\sigma}((0,T];H^{-1}(\mathbb R^d)).
\end{cases}
\end{equation*}
 \end{itemize}
\end{theorem}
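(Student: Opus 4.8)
The plan is to recognize Theorem~\ref{T6} as a direct specialization of the abstract results in Section~\ref{section4} to the concrete setting $E=H^{-1}(\mathbb{R}^d)$, so that once the hypotheses of Theorem~\ref{T4} are verified in this case, both parts follow essentially verbatim. First I would confirm that the abstract framework applies at all: since $H^{-1}(\mathbb{R}^d)$ is a Hilbert space, Remark~\ref{rm0} gives at once that $E$ is a UMD Banach space of type~2, so the standing assumption of the section is met and no new functional-analytic input about the geometry of $E$ is required.

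Next I would check the three structural ingredients, namely the operator $A$, the driving process $W$, and the coefficients $F,G$. For the operator I would invoke \cite[Theorem~2.2]{yagi}: under $a\in L^\infty(\mathbb{R}^d)$ with $\inf_{x}a(x)>0$, the realization $A$ of $-\Delta+a(x)$ in $H^{-1}(\mathbb{R}^d)$ is sectorial with domain $\mathcal{D}(A)=H^1(\mathbb{R}^d)$, so condition~{\rm(A)} holds. For the noise I would use the construction recalled just before the statement, whereby the space-time white noise $\sigma(t,x)\,\partial W/\partial t$ is realized as $G(t)\,\partial W/\partial t$ with $W$ a cylindrical Wiener process on a separable Hilbert space $H\supseteq H_0$. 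The decisive identification here is Remark~\ref{rm1}: because $E$ is Hilbert, $\gamma(H;E)$ is isometric to $L_2(H;E)$, so the hypothesis that each $G(t)$ be Hilbert--Schmidt from $H$ to $H^{-1}(\mathbb{R}^d)$ is precisely the statement $G(t)\in\gamma(H;E)$, and the $L_2$-norms displayed in the estimates coincide with the $\gamma(H;E)$-norms used in Theorem~\ref{T4}. Assuming {\rm(F2)} and {\rm(G)} then places $F$ and $G$ exactly in the weighted H\"older classes demanded by the abstract theorem.

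With these verifications in hand, Part~{\rm(i)} follows from Theorem~\ref{T4}{\rm(i)} and Part~{\rm(ii)} from Theorem~\ref{T4}{\rm(ii)}, reading off the stated estimates after the substitution $\mathbb{E}\|\xi\|=\|u_0\|$ (the initial datum $u_0$ being deterministic, so $\xi=u_0$ and $\mathbb{E}\|\xi\|<\infty$ is automatic). For the strict-solution claim in Part~{\rm(i)} I would note that $\alpha_1\leq 0$ together with $\alpha_2<\alpha_1-\tfrac12$ forces $\alpha_2<-\tfrac12$, which is exactly the condition under which Theorem~\ref{T4}{\rm(i)} — equivalently Theorem~\ref{T3.5} through Corollary~\ref{cor1} — upgrades the mild solution to a strict one.

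The main, and essentially only, obstacle is the sectoriality of $A$ on the negative-order space $H^{-1}(\mathbb{R}^d)$; this is not elementary, but it is supplied directly by \cite[Theorem~2.2]{yagi}, so no fresh analysis is needed. Everything else is bookkeeping: matching the Hilbert--Schmidt and $\gamma$-radonifying norms through Remark~\ref{rm1}, and checking that the exponent constraints on $\kappa,\alpha_1,\alpha_2,\beta,\sigma$ in Theorem~\ref{T6} are literally the specializations of those in Theorem~\ref{T4} to the present space. I therefore expect the write-up to be short, consisting of these hypothesis checks followed by an appeal to Theorem~\ref{T4}.
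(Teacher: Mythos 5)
Your proposal is correct and matches the paper's own treatment: the paper likewise proves Theorem \ref{T6} by noting that $H^{-1}(\mathbb{R}^d)$ is a UMD space of type 2 (Remark \ref{rm0}), that $A$ is sectorial by \cite[Theorem 2.2]{yagi}, that Hilbert--Schmidt operators coincide with $\gamma$-radonifying ones (Remark \ref{rm1}), and then simply invokes Theorem \ref{T4}. Your additional observation that $\alpha_1\leq 0$ and $\alpha_2<\alpha_1-\tfrac12$ force $\alpha_2<-\tfrac12$, the condition needed for the strict-solution upgrade, is exactly the right bookkeeping.
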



\end{document}